\numberwithin{equation}{section}
\newtheorem{theorem}{Theorem}[section]
\newtheorem{proposition}[theorem]{Proposition}
\newtheorem{corollary}[theorem]{Corollary}
\newtheorem{conjecture}[theorem]{Conjecture}
\newtheorem{lemma}[theorem]{Lemma}
\theoremstyle{definition}
\newtheorem{definition}[theorem]{Definition}
\DeclareMathOperator{\ad}{ad}
\DeclareMathOperator{\End}{End}
\DeclareMathOperator{\Res}{Res}
\newcommand{\BF}{\mathbb{F}}
\newcommand{\BN}{\mathbb{N}}
\newcommand{\BZ}{\mathbb{Z}}
\newcommand{\BC}{\mathbb{C}}
\newcommand{\spanf}{\mathrm{span}_\BF}
\newcommand{\fg}{\mathfrak{g}}
\newcommand{\D}{\mathcal{D}}
\newcommand{\B}{\mathcal{B}}
\newcommand{\bc}{\mathbf{c}}
\newcommand{\bk}{\mathbf{k}}
\newcommand{\Vir}{{\mathcal{V} \rm ir}}
\newcommand{\CL}{\mathcal{L}}
\newcommand{\1}{\mathbf{1}}
\begin{document}
\title{Modular Virasoro vertex algebras and affine vertex algebras}
\author{Xiangyu Jiao\footnote{Partially supported by China NSF grants ($\#$11401213, 11571391),
STCSM grant ($\#$13dz2260400)}\\
Department of Mathematics, East China Norma University,\\
Shanghai 200241, China\\
\textit{E-mail address:} \texttt{xyjiao@math.ecnu.edu.cn}\and
Haisheng Li\footnote{Partially supported by China NSF grants  ($\#$11471268, 11571391, 11671247)}\\
Department of Mathematical Sciences, Rutgers University,\\
Camden, NJ 08102, USA, and\\
School of  Mathematical Sciences, Xiamen University, Xiamen 361005, China\\
\textit{E-mail address:} \texttt{hli@camden.rutgers.edu}\and
Qiang Mu\footnote{Partially supported by China NSF grant ($\#$11571391),
Heilongjiang Provincial Natural Science Foundation grant ($\#$LC2015001),
and Program for NCET in Heilongjiang Provincial University grant ($\#$1254-{}-NCET-{}-010)} \thanks{Corresponding author.}\\
School of Mathematical Sciences, Harbin Normal University,\\
Harbin, Heilongjiang 150025, China\\
\textit{E-mail address:} \texttt{qmu520@gmail.com}}
\maketitle

\begin{abstract}
In this paper, we study Virasoro vertex algebras and affine vertex algebras over a general field of characteristic $p>2$.
More specifically, we study certain quotients of the universal Virasoro and affine vertex algebras
by ideals related to the $p$-centers of the Virasoro algebra and affine Lie algebras.
Among the main results, we classify their irreducible $\BN$-graded modules by explicitly determining their Zhu algebras
and show that these vertex algebras have only finitely many irreducible $\BN$-graded modules and they are  $C_2$-cofinite.
\end{abstract}

\section{Introduction}
Vertex (operator) algebra has been extensively studied for about three decades since it was introduced
in mathematics in the late 1980's (see \cite{B86}, \cite{FLM}, \cite{FHL}).
Vertex algebras are a class of highly nonclassical algebraic structures and
they have deep connections with numerous  fields in mathematics and in physics as well.
Among the important examples of vertex  algebras are those associated to
infinite-dimensional Lie algebras such as affine Lie algebras, Heisenberg algebras, and the Virasoro algebra
(see \cite{FZ}), and those associated to non-degenerate even lattices (see  \cite{B86}, \cite{FLM}).

In the past, studies were mainly focused on vertex algebras over fields of characteristic $0$.
(Note that the notion of vertex algebra, introduced the first time by Borcherds in \cite{B86},  is over an arbitrary field.)
In literature, integral forms of vertex algebras have been studied
from various points of view (see \cite{BR1,BR2}, \cite{DG}, \cite{Mc1,Mc2}).
There are also some works on modular vertex algebras and their modules.
In \cite{DR1}, Dong and Ren studied the representations for a general vertex algebra over an
arbitrary field, and in \cite{DR2} they studied modular vertex operator algebras associated to
the Virasoro algebra and their representations with the main focus on the case of central charge $\frac{1}{2}$.
Based on the work of Dong and Griess (see \cite{DG}),
the third named author of this current paper studied  (see \cite{Mu})
vertex algebras over fields of prime characteristic,
by using  integral forms of lattice vertex operator algebras.
In \cite{LM}, two of us studied modular Heisenberg vertex algebras.
Arakawa and Wang studied modular affine vertex algebras at critical levels in \cite{AW}.
Recently, Wang did another study on modular representations for affine ${\mathfrak{sl}}_{2}$ and the Virasoro algebra (see \cite{wang}).

In the case of prime characteristic, most of the conceptual results in characteristic zero  essentially hold true.
For example, a prime-characteristic version of Zhu's $A(V)$-theory was given in \cite{DR1},
while a prime-characteristic version of the conceptual construction of vertex algebras and modules (see \cite{Li96}),
and a prime-characteristic version of the existence theorem of Frenkel-Kac-Ratiu-Wang (see \cite{FKRW}) and Meurman-Primc (see \cite{MP}) were given in \cite{LM}. On the other hand, for concrete examples there are interesting new features.
For example, modular Heisenberg vertex algebras for nonzero levels are no longer simple due to certain central elements
and the simple quotient vertex algebras have only one irreducible module up to equivalences.  This
is closely related to restricted Lie algebras.

In the theory of Lie algebras over a field of prime characteristic $p$,
an important notion is that of restricted Lie algebra  (see \cite{Jac, Jac2}).
For a restricted Lie algebra $\fg$, which is a Lie algebra with a mapping $a\mapsto a^{[p]}$ satisfying certain properties,
a classical fact is that elements $a^p-a^{[p]}$ for $a\in\fg$ lie in the center of
the universal enveloping algebra $U(\fg)$. The subalgebra generated
by these central elements is called the $p$-center, while
the quotient algebra  $\mathfrak{u}(\fg)$ of  $U(\fg)$ by the ideal generated by  $a^p-a^{[p]}$ for all $a\in\fg$ is called
the restricted universal enveloping algebra. If $\fg$ is a finite-dimensional restricted Lie algebra,
it was known that $\mathfrak{u}(\fg)$ is finite-dimensional and it admits only finitely many
non-isomorphic irreducible modules.

In this paper, as a natural continuation of \cite{LM}, we study modular vertex algebras associated to
 the Virasoro algebra and affine Lie algebras.
Let  $V_{\Vir}(c,0)$ be the vertex algebra based on
the generalized Verma module of central charge $c\in \BF$ for the Virasoro algebra $\Vir$
and let $V_{\hat{\fg}}(\ell,0)$ be the vertex algebra based on the generalized Verma module of level $\ell\in \BF$
for an affine Lie algebra $\hat{\fg}$ with $\fg$ a restricted Lie algebra.
In this paper, we study a special quotient vertex algebra $V^{0}_{\Vir}(c,0)$ of $V_{\Vir}(c,0)$
and a special quotient vertex algebra $V^{0}_{\hat{\fg}}(\ell,0)$ of $V_{\hat{\fg}}(\ell,0)$,
which to a certain extent are analogous to restricted universal enveloping algebras.
It is proved that the Zhu algebra of $V^{0}_{\Vir}(c,0)$ is finite-dimensional
and $V^0_{\Vir}(c,0)$ has exactly $p$ irreducible $\BN$-graded modules up to equivalences.
As for vertex algebra $V^{0}_{\hat{\fg}}(\ell,0)$, it is proved that the Zhu algebra  is naturally isomorphic to
the restricted universal enveloping algebra $\mathfrak{u}(\fg)$.
Furthermore, it is shown that if $\fg$ is finite-dimensional, $V^{0}_{\hat{\fg}}(\ell,0)$ has only finitely
 many irreducible $\BN$-graded modules up equivalences.
It is also proved in this paper that vertex algebras $V_{\Vir}^{0}(c,0)$ and $V^{0}_{\hat{\fg}}(\ell,0)$ are
$C_2$-cofinite for any $c,\ell\in \BF$.

We now give a more detailed count of the contents of this paper.
Throughout this paper, fix a field $\BF$ of characteristic $p>2$.
We start with the Virasoro algebra, which, denoted by $\Vir$,  is a Lie algebra with a basis
$\{ L_{n}\mid n\in \BZ\}\cup \{ \bc\}$, where $\bc$ is central and
\begin{equation*}
    [L_{m},L_{n}]=(m-n)L_{m+n}+\frac{1}{2}\binom{m+1}{3}\bc
\end{equation*}
for $m,n\in \BZ$. (Note that this defining relation is well defined for $p> 2$.)
First, we show that $\Vir$ has a restricted Lie algebra structure with
$\bc^{[p]}=\bc$ and for $n\in \BZ$,
\begin{equation}
(L_{n})^{[p]}=\begin{cases}L_{np}& \   \   \mbox{if }p\mid n\\
0& \   \   \mbox{if }p\nmid n.
\end{cases}
\end{equation}
For each $c\in \BF$, let $V_{\Vir}(c,0)$ be the universal vacuum $\Vir$-module of central charge $c$,
which is defined to be the $\Vir$-module with a generator $\1$, subject to relations
\begin{equation*}
    \bc \1=c\1,\    \    \    \   L_{n}\1=0\   \   \mbox{ for }n\ge -1.
\end{equation*}
Just as in the case of characteristic $0$ (cf. \cite{FZ}), one can show (see \cite{LM}) that
$V_{\Vir}(c,0)$ has a canonical vertex algebra structure with $\1$ as the vacuum vector.
Vertex algebra $V_{\Vir}(c,0)$ has a natural $\BZ$-grading which is uniquely determined by
$\deg \1=0$ and $\deg L_{n}=-n$ for $n\in \BZ$. Let $I_{0}$ be the $\Vir$-submodule of $V_{\Vir}(c,0)$ generated by vectors
\begin{equation*}
    (L(-n)^{p}-L(-n)^{[p]})\1\    \    \    \mbox{ for }n\ge 2.
\end{equation*}
It is proved that $I_{0}$ is an ideal of the vertex algebra $V_{\Vir}(c,0)$. Denote by $V^{0}_{\Vir}(c,0)$ the quotient vertex algebra of
$V_{\Vir}(c,0)$ by $I_{0}$. Then it is proved that the Zhu algebra of $V^{0}_{\Vir}(c,0)$ is isomorphic to the algebra
$\BF[x]/(x^p-x)\BF[x]$.
It follows from Zhu's $A(V)$-theory that $V^0_{\Vir}(c,0)$ has exactly $p$ irreducible $\BN$-graded modules up to equivalences.

Let $\fg$ be a Lie algebra equipped with a non-degenerate symmetric invariant bilinear form $\langle \cdot,\cdot \rangle$.
Denote by $\hat{\fg}$ the affine Lie algebra.
Just as in the case of characteristic $0$ (cf. \cite{FZ}), for each $\ell\in \BF$, the universal vacuum module $V_{\hat{\fg}}(\ell,0)$
 for the affine Lie algebra $\hat{\fg}$ of level $\ell$ has a canonical vertex algebra structure and
it is naturally $\BZ$-graded.
Assume that $\fg$ is a restricted Lie algebra. It was known (see \cite{Ma}) that  $\hat{\fg}$ has
a restricted Lie algebra structure with $a(n)^{[p]}=a^{[p]}(np)$ for $n\in \BZ$ and $\bk^{[p]}=\bk$.
Let $J_{0}$ be the $\hat{\fg}$-submodule of $V_{\hat{\fg}}(\ell,0)$, generated by vectors
\begin{equation*}
    (a(-n)^{p}-a^{[p]}(-np))\1\    \    \   \mbox{ for }a\in \fg,\  n\ge 1.
\end{equation*}
It is proved that $J_{0}$ is a graded ideal of the vertex algebra $V_{\hat{\fg}}(\ell,0)$.
Denote by $V^{0}_{\hat{\fg}}(\ell,0)$ the quotient vertex algebra of $V_{\hat{\fg}}(\ell,0)$ by $J_{0}$.
It is proved that the Zhu algebra $A(V^{0}_{\hat{\fg}}(\ell,0))$ is naturally isomorphic to the
restricted universal enveloping algebra $\mathfrak{u}(\fg)$. It then follows that  irreducible $\BN$-graded
$V^{0}_{\hat{\fg}}(\ell,0)$-modules are classified by irreducible
$\mathfrak{u}(\fg)$-modules. Note that $\mathfrak{u}(\fg)$ is finite-dimensional if $\fg$ is finite-dimensional.
Thus $V^{0}_{\hat{\fg}}(\ell,0)$ has only finitely many irreducible
$\BN$-graded modules up to equivalences, provided that $\fg$ is a finite-dimensional restricted Lie algebra.

In vertex operator algebra theory, an important concept is $C_2$-cofiniteness due to Zhu.
In this paper, we also study the $C_2$-cofiniteness  of vertex algebras $V_{\Vir}^{0}(c,0)$ and $V^{0}_{\hat{\fg}}(\ell,0)$.
For a general vertex algebra $V$, set
\begin{equation*}
    C_{2}(V)={\rm span}\{  u_{-2-k}v\mid  u,v\in V,\ k\in \BN\}.
\end{equation*}
(Note that in the case of characteristic $0$, this definition agrees with Zhu's definition in \cite{zhu1, zhu2}.)
Just as in the case of characteristic $0$, the quotient space $V/C_2(V)$ has a canonical commutative and associative algebra structure.
(In fact, it has a Poisson algebra structure.) A vertex algebra $V$ is said to be $C_2$-cofinite if $V/C_2(V)$ is finite-dimensional.
It is proved that vertex algebras $V_{\Vir}^{0}(c,0)$ and $V^{0}_{\hat{\fg}}(\ell,0)$ are $C_2$-cofinite
for any $c,\ell\in \BF$.

Note that  $V_{\Vir}(c,0)$ as a $\Vir$-module has a (unique) maximal graded $\Vir$-submodule $J$.
It is proved that $J$ is an ideal of vertex algebra $V_{\Vir}(c,0)$.
Then the quotient $L_{\Vir}(c,0)$ of $V_{\Vir}(c,0)$ by $J$ is a simple graded vertex algebra.
Similarly, one has a simple graded vertex algebra $L_{\hat{\fg}}(\ell,0)$, which is
the quotient of $V_{\hat{\fg}}(\ell,0)$ modulo the maximal
graded $\hat{\fg}$-submodule. In the case of characteristic $0$,  irreducible modules for $L_{\Vir}(c,0)$ and
$L_{\hat{\fg}}(\ell,0)$ are the main focus.
Certainly, classifying  irreducible $\BN$-graded modules for
 modular vertex algebras $L_{\Vir}(c,0)$ and $L_{\hat{\fg}}(\ell,0)$ will be an important problem.
 We shall study this problem in a future publication.

This paper is organized as follows: Section 2 is preliminary; We review basic notations and notions, and we recall some
results on vertex algebras and their modules, Zhu's $A(V)$-theory, $C_2$-cofiniteness.
In Section 3, we study the Virasoro vertex operator algebra $V_{\Vir}(c,0)$.
In Section 4, we determine the Zhu algebra $A(V^{0}_{\Vir}(c,0))$ and classify irreducible $\BN$-graded
$V^{0}_{\Vir}(c,0)$-modules.
In Section 5, we study affine vertex algebras $V_{\hat{\fg}}(\ell,0)$.
In Section 6, we determine the Zhu algebra of vertex algebra $V^{0}_{\hat{\fg}}(\ell,0)$ and classify
irreducible $\BN$-graded $V^{0}_{\hat{\fg}}(\ell,0)$-modules. In the Appendix, we prove a simple fact
on binomial coefficients.

\section{Preliminaries}

This section is mostly expository.
In this section, we recall basic definitions of various notions, including vertex algebras  and modules,
Zhu's $A(V)$-theory from \cite{DR1}, $C_2$-cofiniteness. We also recall some technical results from \cite{LM}.

First of all, let $\BF$ be an algebraically closed field of characteristic $p\ge 3$, which is fixed throughout this paper.
All vector spaces in this paper are considered to be over $\BF$.
We use the usual symbols $\BZ$ for the integers,
$\BZ_+$ for the positive integers, and $\BN$ for the nonnegative integers. Note that for any $m\in \BZ,\ n\in \BN$,
we have
\begin{equation*}
    \binom{m}{n}=\frac{m(m-1)\cdots (m+1-n)}{n!}\in \BZ,
\end{equation*}
so that $\binom{m}{n}$ is a well defined element of the field $\BF$.

We shall use the formal variable notations and conventions as established in \cite{FLM} and \cite{FHL}.
In particular, symbols $x, z, x_0,x_1,x_2$ will be mutually commuting independent formal variables, and
given a vector space $U$, we have vector spaces
\begin{eqnarray*}
U[[x,x^{-1}]]&=&\left\{  \sum_{n\in \BZ}u(n)x^{n}\Biggm| u(n)\in U\right\},\\
U((x))&=&\left\{ \sum_{n\ge k}u(n)x^{n}\Biggm| k\in \BZ,\ u(n)\in U\right\}.
\end{eqnarray*}
The formal delta function is
\begin{equation*}
    \delta(x)=\sum_{n\in \BZ}x^{n}\in \BF[[x,x^{-1}]].
\end{equation*}
We shall also use the formal variable expansion convention
\begin{equation}
(x_1\pm x_2)^{n}=\sum_{i\ge 0}\binom{n}{i}(\pm 1)^{i}x_1^{n-i}x_2^{i}
\end{equation}
for any $n\in \BZ$. Furthermore, we have
\begin{equation*}
    x_0^{-1}\delta\left(\frac{x_1\pm x_2}{x_0}\right)=\sum_{n\in \BZ}x_0^{-n-1}(x_1\pm x_2)^{n}.
\end{equation*}

In the theory of modular Lie algebras, an important role is played by the {\em Hasse derivatives}, which are defined by
\begin{equation}
 \partial_x^{(n)} x^m=\binom{m}{n}x^{m-n}\    \    \  \mbox{ for }m\in\BZ, \ n\in \BN.
\end{equation}
More generally,  we view $\partial_x^{(n)}$ with $n\in \BN$ as linear operators on $U[[x,x^{-1}]]$ for any vector space $U$.
These operators satisfy relations
\begin{eqnarray}
   \partial_x^{(m)} \cdot  \partial_x^{(n)} =\binom{m+n}{n}\partial_x^{(m+n)}
 \end{eqnarray}
 for $m,n\in \BN$.

 Now, define an (abstract) bialgebra $\B$  (cf. \cite{B86}) with a designated basis $\{\D^{(n)}\mid n\in\BN\}$, where
\begin{eqnarray}
    && \D^{(m)}\cdot \D^{(n)}=\binom{m+n}{n}\D^{(m+n)}, \quad
    \D^{(0)}=1,\label{eD-multiplication}\\
    && \Delta(\D^{(n)})=\sum_{i=0}^n \D^{(n-i)}\otimes \D^{(i)}, \quad
    \varepsilon(\D^{(n)})=\delta_{n,0}\label{ecoproduct-D}
\end{eqnarray}
for $m,n\in\BN$. Then, for any vector space $U$, $U[[x,x^{-1}]]$ is naturally a $\B$-module with
$\D^{(n)}$ acting as $\partial_x^{(n)}$ for $n\in \BN$. Following \cite{DR1}, we set
\begin{equation}
    e^{x\D}=\sum_{n\ge0}x^n \D^{(n)}\in \B[[x]],
\end{equation}
so that $e^{x\D}$ acts on every $\B$-module.

Recall that a {\em $\B$-module algebra} is an associative algebra $A$ equipped with a $\B$-module
structure such that
\begin{equation*}
    b\cdot(uv)=\sum (b^{(1)}\cdot u)(b^{(2)}\cdot v)
\end{equation*}
for $b\in \B,\ u,v\in A$, where $\Delta(b)=\sum b^{(1)}\otimes b^{(2)}$, whereas
a {\em $\B$-module Lie algebra} is a Lie algebra $\fg$ equipped with a $\B$-module structure such that
\begin{equation*}
    b\cdot[u,v]=\sum [b^{(1)}u,b^{(2)}v]
\end{equation*}
for $b\in \B,\ u,v\in \fg$. A basic fact is that if $\fg$ is a $\B$-module Lie algebra, then the universal enveloping algebra
$U(\fg)$ is naturally a $\B$-module algebra.

Note that Borcherds in \cite{B86} defined the notion of vertex algebra for an arbitrary field.
Just  as in the case of characteristic zero,  the notion of vertex algebra over $\BF$ can be  defined alternatively
by using the vacuum property, creation property, and the Jacobi identity in terms of generating functions as follows (cf. \cite{LL}):

\begin{definition}\label{def-va}
A {\em vertex algebra} is a vector space
$V$ over $\BF$, equipped with a linear map
\begin{align*}
Y(\cdot,x):\ \ V&\rightarrow (\End V)[[x,x^{-1}]] \\
v&\mapsto Y(v,x)=\sum_{n\in \BZ}v_{n}x^{-n-1}\   \  (\mbox{where }v_{n}\in \End V)
\end{align*}
and a vector $\1\in V$ such that
\begin{gather*}
    Y(u,x)v\in V((x))\   \   \mbox{ for }u,v\in V,\\
    Y(\1,x)v=v,\  \   Y(v,x)\1\in V[[x]]\   \mbox{ and }\ \lim_{x\rightarrow 0}Y(v,x)\1=v\   \  \mbox{ for }v\in V,
\end{gather*}
and such that for $u,v,w\in W$,
\begin{eqnarray}\label{Jacobi-va}
&&x_0^{-1}\delta\left(\frac{x_1-x_2}{x_0}\right)Y(u,x_1)Y(v,x_2)w-x_0^{-1}\delta\left(\frac{x_2-x_1}{-x_0}\right)Y(v,x_2)Y(u,x_1)w
\nonumber\\
&&\hspace{3cm}=x_2^{-1}\delta\left(\frac{x_1-x_0}{x_2}\right)Y(Y(u,x_0)v,x_2)w.
\end{eqnarray}
\end{definition}

Let $V$ be a vertex algebra. It is known (see \cite{B86}; cf. \cite{LM}) that  $V$ is a $\B$-module with
\begin{equation}\label{Dn-vacuum}
\D^{(n)}v=v_{-n-1}\1\   \    \    \mbox{ for }n\in \BN,\  v\in V,
\end{equation}
which can be written as
\begin{equation}
e^{x\D}v=Y(v,x)\1\   \   \   \mbox{ for }v\in V.
\end{equation}
The last property is a special case of the  {\em skew symmetry}
\begin{equation}
Y(u,x)v=e^{x\D}Y(v,-x)u\    \    \    \mbox{ for }u,v\in V.
\end{equation}
We also have
\begin{equation}\label{conjugation}
e^{z\D}Y(v,x)e^{-z\D}=Y(e^{z\D}v,x)=e^{z\partial_{x}}Y(v,x)
\end{equation}
for $v\in V$, where $e^{z\partial_{x}}=\sum_{n\in\BN}z^{n}\partial_{x}^{(n)}$.

For a vertex algebra $V$, the notions of left ideal, right ideal, and ideal are defined in the usual way:
An {\em ideal} of $V$ is a subspace $I$ such that
\begin{equation}
v_{n}w,\   \   w_{n}v\in I\   \   \   \mbox{ for all }v\in V,\ n\in \BZ,\ w\in I,
\end{equation}
whereas a {\em left (right) ideal} of $V$ is a subspace $I$ such that $v_{n}w\in I$ ($w_{n}v\in I$) for all $v\in V,\ n\in \BZ,\ w\in I$.
In view of the skew symmetry, an ideal is the same as a $\B$-stable left (or right) ideal.
On the other hand, in view of (\ref{Dn-vacuum}),
a right ideal is automatically $\B$-stable, so that a right ideal is just the same as an ideal.

\begin{definition}\label{va-module}
For a vertex algebra $V$, a {\em $V$-module} is a vector space $W$ equipped with a linear map
\begin{align*}
Y_{W}(\cdot,x):\ \ V&\rightarrow (\End W)[[x,x^{-1}]] \\
v&\mapsto Y_{W}(v,x),
\end{align*}
satisfying the conditions that $Y_{W}(\1,x)=1_{W}$ (the identity operator on $W$),
\begin{equation*}
    Y_{W}(u,x)w\in W((x))\   \   \   \mbox{ for }u\in V,\ w\in W,
\end{equation*}
and that for $u,v\in V,\ w\in W$, the Jacobi identity (\ref{Jacobi-va}) with the obvious changes holds.
\end{definition}

Note that a vertex algebra $V$ is naturally a $V$-module and a left ideal of vertex algebra $V$ amounts to
a $V$-submodule of the {\em adjoint} $V$-module $V$.

We have (see \cite{LM}):

\begin{lemma}\label{D-module}
Let $(W,Y_{W})$ be a module for a vertex algebra $V$. Then
\begin{equation}
Y_{W}(\D^{(n)}v,x)=\partial_{x}^{(n)}Y_{W}(v,x)
\end{equation}
for $n\in \BN,\ v\in V$.
\end{lemma}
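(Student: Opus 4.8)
The plan is to deduce the identity from the module Jacobi identity by inserting the vacuum vector into one slot, after rephrasing the claim in generating-function form. Recall from \eqref{Dn-vacuum} that $\D^{(n)}v=v_{-n-1}\1$, so that $Y(v,x_0)\1=e^{x_0\D}v=\sum_{n\ge0}x_0^{n}\D^{(n)}v$ in $V$. Consequently the asserted equalities for all $n\in\BN$ are equivalent to the single identity
\begin{equation*}
Y_W(Y(v,x_0)\1,x_2)=\sum_{n\ge0}x_0^{n}\,\partial_{x_2}^{(n)}Y_W(v,x_2),
\end{equation*}
and this is what I will prove.

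First I would apply the Jacobi identity for the module $W$ (see Definition~\ref{va-module}) with the pair of vectors $(u,v)$ replaced by $(v,\1)$ and with an arbitrary $w\in W$, and then take $\Res_{x_1}$ of both sides. On the right-hand side one has the elementary computation $\Res_{x_1}\,x_2^{-1}\delta\!\left(\frac{x_1-x_0}{x_2}\right)=1$, so what survives is $Y_W(Y(v,x_0)\1,x_2)w$. On the left-hand side, $Y_W(\1,x_2)=1_W$ collapses the two products to $Y_W(v,x_1)w$, giving
\begin{equation*}
Y_W(Y(v,x_0)\1,x_2)w=\Res_{x_1}\left[x_0^{-1}\delta\!\left(\frac{x_1-x_2}{x_0}\right)-x_0^{-1}\delta\!\left(\frac{x_2-x_1}{-x_0}\right)\right]Y_W(v,x_1)w.
\end{equation*}
It then remains to show that the right-hand side equals $\sum_{n\ge0}x_0^{n}\,\partial_{x_2}^{(n)}\bigl(Y_W(v,x_2)w\bigr)$.

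Since $Y_W(v,x_1)w\in W((x_1))$ is bounded below in $x_1$, I would write $Y_W(v,x_1)w=\sum_{k}w_k x_1^{k}$ and evaluate $\Res_{x_1}$ monomial by monomial (for each fixed power of $x_0$ only finitely many monomials contribute). Expanding the two delta functions by the formal-variable convention, the computation for a single monomial $x_1^{k}$ reduces to an elementary binomial identity valid over $\BZ$ — namely $(-1)^{i}\binom{i-k-1}{i}=\binom{k}{i}$ for $i\in\BN$, $k\in\BZ$ (cf.\ the Appendix) — and one checks that the negative powers of $x_0$ produced by the two delta functions cancel and that
\begin{equation*}
\Res_{x_1}\left[x_0^{-1}\delta\!\left(\frac{x_1-x_2}{x_0}\right)-x_0^{-1}\delta\!\left(\frac{x_2-x_1}{-x_0}\right)\right]x_1^{k}=\sum_{n\ge0}\binom{k}{n}x_0^{n}x_2^{k-n}=\sum_{n\ge0}x_0^{n}\,\partial_{x_2}^{(n)}x_2^{k}.
\end{equation*}
Summing over $k$ yields $\sum_{n\ge0}x_0^{n}\,\partial_{x_2}^{(n)}\bigl(Y_W(v,x_2)w\bigr)$; since $\partial_{x_2}^{(n)}$ acts only on the formal variable it commutes with application to $w$, so comparing coefficients of $x_0^{n}$ and letting $w$ range over $W$ gives $Y_W(\D^{(n)}v,x)=\partial_x^{(n)}Y_W(v,x)$, as desired.

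The routine parts are the two $\Res_{x_1}$ evaluations of delta functions; the main obstacle is the last one — verifying that the terms carrying negative powers of $x_0$ cancel between the two delta-function contributions and that the remaining nonnegative powers reassemble into the Hasse derivatives $\partial_{x_2}^{(n)}$. This is purely bookkeeping with binomial coefficients in $\BF$, and there is no convergence issue, because everything is applied to a fixed $w\in W$, where $Y_W(v,x_1)w$ has only finitely many negative powers of $x_1$.
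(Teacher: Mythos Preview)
Your argument is correct. The paper does not actually prove this lemma here---it merely states it with a citation to \cite{LM}---so there is no paper proof to compare against. Your route via the Jacobi identity with $\1$ in the second slot, followed by $\Res_{x_1}$, is the standard one and is sound.

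One remark that would shorten the last step: rather than computing monomial by monomial and checking the cancellation of negative powers of $x_0$ by hand, you can invoke the three-term delta-function identity
\[
x_0^{-1}\delta\!\left(\frac{x_1-x_2}{x_0}\right)-x_0^{-1}\delta\!\left(\frac{x_2-x_1}{-x_0}\right)
= x_1^{-1}\delta\!\left(\frac{x_2+x_0}{x_1}\right),
\]
which is the basic formal identity underlying the Jacobi axiom (see \cite{FLM}, \cite{LL}). Then the substitution property of $\delta$ applied to $Y_W(v,x_1)w\in W((x_1))$ gives directly
\[
\Res_{x_1}\,x_1^{-1}\delta\!\left(\frac{x_2+x_0}{x_1}\right)Y_W(v,x_1)w = Y_W(v,x_2+x_0)w
= \sum_{n\ge 0}x_0^{\,n}\,\partial_{x_2}^{(n)}Y_W(v,x_2)w,
\]
since $(x_2+x_0)^k=\sum_{n\ge 0}\binom{k}{n}x_2^{k-n}x_0^{n}$ under the convention in force. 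This is exactly what you obtain, but avoids the separate verification of cancellation between the two delta contributions.
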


The following result was obtained in \cite{LM} (cf. \cite{AW}):

\begin{lemma}\label{th:a(x)^p}
Let $V$ be a vertex algebra and $(W,Y_{W})$ a $V$-module.
Then
\begin{equation*}
    Y_{W}\left((a_{-n})^p\1,x\right)=\Biggl(\sum_{j\ge0}\binom{-n}{j}(-1)^j a_{-n-j}x^{j}\Biggr)^p
            +\Biggl(-\sum_{j\ge0}\binom{-n}{j}(-1)^{n+j}a_{j}x^{-n-j}\Biggr)^p
\end{equation*}
for $a\in V,\  n\in\BZ_+$.
\end{lemma}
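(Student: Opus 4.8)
The plan is to deduce the identity from the general ``iterate formula'' for vertex operators, followed by a short induction and the vanishing in $\BF$ of the interior binomial coefficients $\binom{p}{k}$ with $0<k<p$. It is convenient to abbreviate
\[
A(x)=\sum_{j\ge0}\binom{-n}{j}(-1)^j a_{-n-j}\,x^{j},\qquad
B(x)=-\sum_{j\ge0}\binom{-n}{j}(-1)^{n+j}a_{j}\,x^{-n-j},
\]
regarded as formal series in $x$ with coefficients in $\End W$, so that the right-hand side to be proved is exactly $A(x)^p+B(x)^p$. Each coefficient of $A(x)$ (resp.\ of $B(x)$) is a scalar multiple of a single mode $a_m$ with $m\le-n$ (resp.\ with $m\ge0$); since $Y_W(v,x)w\in W((x))$ for all $v\in V$, $w\in W$, one checks routinely that $A(x)Y_W(v,x)$, $Y_W(v,x)B(x)$, and all the iterated products $A(x)^kB(x)^{m-k}$ below are well-defined. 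I would dispose of this bookkeeping first.

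The first substantive step is to prove that, for every $v\in V$,
\[
Y_W(a_{-n}v,x)=A(x)\,Y_W(v,x)+Y_W(v,x)\,B(x).
\]
This is the $k=-n$ instance of the iterate formula, and I would obtain it directly from the Jacobi identity (\ref{Jacobi-va}) (applied in the module $W$) by applying $\Res_{x_0}x_0^{-n}\Res_{x_1}$, with the variable $x_2$ written $x$: the right-hand side of (\ref{Jacobi-va}) produces $Y_W(a_{-n}v,x)$, while its two left-hand summands produce $\Res_{x_1}(x_1-x)^{-n}Y_W(a,x_1)Y_W(v,x)$ and $-\Res_{x_1}(x_1-x)^{-n}Y_W(v,x)Y_W(a,x_1)$, where $(x_1-x)^{-n}$ is expanded in nonnegative powers of $x$ in the former and in nonnegative powers of $x_1$ in the latter. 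Evaluating these $x_1$-residues picks out exactly the modes $a_{-n-j}$ ($j\ge0$) in the first term, which recombine into $A(x)Y_W(v,x)$, and the modes $a_j$ ($j\ge0$) in the second, which recombine into $Y_W(v,x)B(x)$; every binomial coefficient that occurs lies in $\BF$, so the computation is valid in characteristic $p$. (Alternatively, this iterate formula may simply be invoked from \cite{LM}.) The structural point to emphasize is that $A(x)$ appears on the \emph{left} of $Y_W(v,x)$ and $B(x)$ on the \emph{right}.

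Granting this, I would show by induction on $m\ge0$ that
\[
Y_W\bigl((a_{-n})^{m}\1,x\bigr)=\sum_{k=0}^{m}\binom{m}{k}\,A(x)^{k}B(x)^{m-k}.
\]
The base case $m=0$ is $Y_W(\1,x)=1_W$. For the step from $m$ to $m+1$, apply the iterate formula with $v=(a_{-n})^{m}\1$; since $A(x)$ enters on the left and $B(x)$ on the right, the inductive hypothesis turns this into $\sum_{k}\binom{m}{k}A(x)^{k+1}B(x)^{m-k}+\sum_{k}\binom{m}{k}A(x)^{k}B(x)^{m+1-k}$, and Pascal's identity $\binom{m}{k-1}+\binom{m}{k}=\binom{m+1}{k}$ (valid over $\BF$) yields the case $m+1$. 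In particular no commutation relation between $A(x)$ and $B(x)$ is ever used. Finally, setting $m=p$ and using $\binom{p}{k}=0$ in $\BF$ for $1\le k\le p-1$ together with $\binom{p}{0}=\binom{p}{p}=1$, the sum collapses to $A(x)^{p}+B(x)^{p}$, which is the assertion. The only genuine computation is the sign-and-expansion bookkeeping for the iterate formula in the second step; beyond that (and the routine well-definedness check) there is no real obstacle.
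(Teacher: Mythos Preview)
Your argument is correct. The paper itself does not give a proof of this lemma but quotes it from \cite{LM} (cf.\ \cite{AW}); your derivation---the iterate formula $Y_W(a_{-n}v,x)=A(x)Y_W(v,x)+Y_W(v,x)B(x)$, the ordered binomial expansion $Y_W((a_{-n})^m\1,x)=\sum_k\binom{m}{k}A(x)^kB(x)^{m-k}$ by induction, and the collapse at $m=p$ via $\binom{p}{k}=0$ in $\BF$ for $0<k<p$---is exactly the natural proof and is essentially what appears in \cite{LM}.
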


Furthermore, we have:

\begin{corollary}\label{th:a_n^p}
In the setting of Lemma \ref{th:a(x)^p}, in addition
assume that $a$ is an element of $V$ satisfying the condition that $a_{i}a\in \BF \1$ for all $i\in \BN$.
 Then
\begin{equation*}
    Y_{W}\left((a_{-n})^p \1,x\right)=\sum_{j\ge0}\binom{n+j-1}{j} (a_{-n-j})^p x^{j p}
            +\sum_{j\ge0}(-1)^{n-1}\binom{n+j-1}{j} (a_{j})^p x^{(-n-j)p}
\end{equation*}
for $n\in\BZ_+$.  In particular,
\begin{equation*}
    Y_{W}\left((a_{-1})^p \1,x\right)=\sum_{j\in\BZ} (a_{j})^p x^{(-j-1)p}.
\end{equation*}
\end{corollary}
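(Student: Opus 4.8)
The plan is to start from the explicit formula for $Y_W((a_{-n})^p\1,x)$ furnished by Lemma \ref{th:a(x)^p} and to simplify the two $p$-th powers appearing there, using the extra hypothesis $a_ia\in\BF\1$ for $i\in\BN$. First I would record the structural consequence of that hypothesis. By the commutator formula (a formal consequence of the Jacobi identity, hence valid over $\BF$), for all $m,n\in\BZ$ one has
\begin{equation*}
[a_m,a_n]=\sum_{i\ge0}\binom{m}{i}(a_ia)_{m+n-i};
\end{equation*}
since each $a_ia$ with $i\ge0$ is a scalar multiple of $\1$ and $\1_{k}=\delta_{k,-1}1_W$, it follows that $[a_m,a_n]$ is a scalar operator on $W$ for all $m,n\in\BZ$. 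In particular the operators $a_m$, $m\in\BZ$, pairwise commute modulo scalars.

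Next I would invoke the standard characteristic-$p$ fact that if operators $A_1,\dots,A_r$ on an $\BF$-vector space pairwise commute modulo scalar operators, then $(A_1+\cdots+A_r)^p=A_1^p+\cdots+A_r^p$. For $r=2$ this follows from Jacobson's formula for $(A+B)^p$ together with $p\ge3$: the iterated bracket $\ad(tA_1+A_2)^{p-1}(A_1)$ vanishes, because $\ad(tA_1+A_2)(A_1)=[A_2,A_1]$ is already central, so every Lie-polynomial correction term in that formula is zero; the general $r$ then follows by an immediate induction. Applying this to the two formal sums in Lemma \ref{th:a(x)^p}, whose summands are $\binom{-n}{j}(-1)^ja_{-n-j}x^j$ and $\binom{-n}{j}(-1)^{n+j}a_jx^{-n-j}$, and which for distinct $j$ commute modulo scalars since powers of $x$ are central, yields
\begin{equation*}
Y_W\left((a_{-n})^p\1,x\right)=\sum_{j\ge0}\binom{-n}{j}^{p}(-1)^{jp}(a_{-n-j})^p x^{jp}-\sum_{j\ge0}\binom{-n}{j}^{p}(-1)^{(n+j)p}(a_j)^p x^{(-n-j)p}.
\end{equation*}
Passing the $p$-th power through the formal-variable series here is legitimate because the coefficient of each power of $x$ on both sides is a finite sum, so the identity can be checked degree by degree.

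It then remains to simplify the coefficients. Each $\binom{-n}{j}$ lies in the prime field $\BF_p$, so $\binom{-n}{j}^{p}=\binom{-n}{j}$; since $p$ is odd, $(-1)^{jp}=(-1)^j$ and $(-1)^{(n+j)p}=(-1)^{n+j}$; and the elementary identity $\binom{-n}{j}(-1)^j=\binom{n+j-1}{j}$, obtained by rewriting the falling factorial (cf.\ the Appendix), together with $-(-1)^n=(-1)^{n-1}$, turns the displayed expression into exactly the asserted formula. The special case $n=1$ then follows at once by putting $\binom{j}{j}=1$ and reindexing the two sums, with $k=-1-j$ in the first and $k=j$ in the second, so that they combine into $\sum_{k\in\BZ}(a_k)^p x^{(-k-1)p}$.

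I do not anticipate a genuine obstacle. The two points that deserve a little care are verifying that $[a_m,a_n]$ is truly central in $\End W$ (and not merely of low conformal weight) and justifying the termwise treatment of the $p$-th power inside the $x$-series; both are routine. The remainder is purely bookkeeping — keeping track of the signs $(-1)^{n+j}$, performing the binomial rewriting, and reindexing consistently.
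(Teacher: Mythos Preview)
Your proof is correct and follows the same overall strategy as the paper: invoke Lemma~\ref{th:a(x)^p}, control the commutators $[a_m,a_n]$ via Borcherds' commutator formula and the hypothesis $a_ia\in\BF\1$, distribute the $p$-th power through each of the two sums, and simplify with Fermat's little theorem and the identity $\binom{-n}{j}(-1)^j=\binom{n+j-1}{j}$.

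The one place where the paper is sharper than your argument is the commutator step. You show $[a_m,a_n]$ is a scalar operator for all $m,n$, and then appeal to Jacobson's formula (using $p\ge3$) to get $(A_1+\cdots+A_r)^p=A_1^p+\cdots+A_r^p$ when the $A_i$ commute modulo scalars. The paper instead observes that within each of the two sums from Lemma~\ref{th:a(x)^p} the modes \emph{actually commute}: in the first sum every index is $-n-j<0$, in the second every index is $j\ge0$, and from $(a_ia)_{m+n-i}\in\BF\cdot\1_{m+n-i}=\BF\cdot\delta_{m+n-i,-1}$ one checks $[a_m,a_n]=0$ whenever $m,n\ge0$ or $m,n<0$. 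This lets the paper use the plain freshman's dream rather than Jacobson's formula, avoiding the $p\ge3$ hypothesis at that particular step. Your route is a valid and only slightly longer detour to the same conclusion.
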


\begin{proof} Recall Borcherds' commutator formula
\begin{equation*}
    [a_{m},a_{n}]=\sum_{i\ge 0}\binom{m}{i}(a_ia)_{m+n-i}
\end{equation*}
for $m,n\in \BZ$. As $a_ia\in \BF \1$ for $i\in \BN$ and $\1_{k}=\delta_{k,-1}$ for $k\in \BZ$, we have that
$[a_{m},a_{n}]=0$ for $m,n\ge 0$ and for $m,n<0$. Then it follows from Lemma \ref{th:a(x)^p}
(and the Fermat's little theorem) immediately.
\end{proof}

\begin{definition}\label{dgradedva}
A vertex algebra $V$ equipped with a $\BZ$-grading  $V=\bigoplus_{n\in \BZ}V_{(n)}$ is called a
{\em $\BZ$-graded vertex algebra} if  $\1\in V_{(0)}$ and
\begin{eqnarray}
u_{k}V_{(n)}\subset V_{(m+n-k-1)}
\end{eqnarray}
for $u\in V_{(m)},\ m,n,k\in \BZ$.
\end{definition}

Let $V=\bigoplus_{n\in \BZ}V_{(n)}$ be a $\BZ$-graded vertex algebra.
Recall that $\D^{(k)}v=v_{-k-1}\1$ for $k\in \BN,\ v\in V$. As an immediate consequence,
we have
\begin{equation}
\D^{(k)}V_{(n)}\subset V_{(n+k)}\   \    \   \mbox{ for }k\in \BN,\ n\in \BZ.
\end{equation}

For a $\BZ$-graded vertex algebra $V$, we define a {\em $\BZ$-graded $V$-module}
to be a $V$-module $(W,Y_{W})$ equipped with a $\BZ$-grading $W=\bigoplus_{n\in \BZ}W(n)$,
satisfying the condition
\begin{eqnarray}
u_{m}W(n)\subset W(k+n-m-1)
\end{eqnarray}
for $u\in V_{(k)},\ k,m,n\in \BZ$. Two $\BZ$-graded $V$-modules $W_1$ and $W_2$ are said to be {\em equivalent} if there
exist a $V$-module isomorphism $\psi: W_1\rightarrow W_2$ and an integer $k$ such that
\begin{equation*}
    \psi (W_1(n))=W_2(n+k)\   \   \   \mbox{ for all }n\in \BZ.
\end{equation*}
An {\em $\BN$-graded $V$-module} is a $\BZ$-graded $V$-module $W=\bigoplus_{n\in \BZ}W(n)$
with $W(n)=0$ for $n<0$. {\em Equivalence} between two $\BN$-graded $V$-modules is defined
correspondingly. Then each nonzero $\BN$-graded $V$-module is equivalent to an $\BN$-graded $V$-module
$W$ with $W(0)\ne 0$.

The following result is about the existence of maximal ideals:

\begin{lemma}\label{max-ideal}
Let $V=\bigoplus_{n\in \BZ}V_{(n)}$ be a $\BZ$-graded vertex algebra such that $V_{(n)}=0$ for $n<0$ and $V_{(0)}=\BF \1$.
Let $J$ be the maximal graded $V$-submodule of $V$ with $J_{(0)}=0$. Then $J$ is a maximal graded ideal.
\end{lemma}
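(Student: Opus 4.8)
The plan is to construct $J$ explicitly, note that it is a \emph{proper graded left ideal}, upgrade it to an ideal by proving it is $\B$-stable, and then verify it absorbs every proper graded ideal. First I would recall that a left ideal of $V$ is the same thing as a $V$-submodule of the adjoint module $V$, and that the family of graded $V$-submodules $M\subseteq V$ with $M_{(0)}=0$ is closed under arbitrary sums (a sum of submodules is a submodule, and the degree-$0$ component of a sum is the sum of the degree-$0$ components). Hence this family has a unique largest member, which is our $J$, and $J$ is proper since $J_{(0)}=0\neq V_{(0)}$. By the discussion preceding Definition~\ref{va-module}, an ideal of $V$ is precisely a $\B$-stable left ideal, so the whole problem reduces to showing $\D^{(k)}J\subseteq J$ for every $k\in\BN$.

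The main tool will be a divided-power Leibniz identity, obtained by rewriting \eqref{conjugation} as $e^{z\D}Y(v,x)w=Y(e^{z\D}v,x)e^{z\D}w$ and comparing coefficients of $z^k x^{-m-1}$, using Lemma~\ref{D-module} to identify $Y(\D^{(i)}v,x)=\partial_x^{(i)}Y(v,x)$:
\[
\D^{(k)}(v_m w)=\sum_{i=0}^{k}(\D^{(k-i)}v)_m\,\D^{(i)}w\qquad(v,w\in V,\ m\in\BZ,\ k\in\BN).
\]
Set $J'=\sum_{k\ge 0}\D^{(k)}J$, the $\B$-submodule of $V$ generated by $J$. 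Since $\D^{(k)}V_{(n)}\subset V_{(n+k)}$ and $V$ is $\BN$-graded, $J'$ is a graded subspace with $J'_{(0)}=J_{(0)}=0$. I would then prove that $J'$ is again a left ideal, i.e. $v_m(\D^{(j)}w)\in J'$ for all $v\in V$, $w\in J$, by induction on $j$: the case $j=0$ is just the left-ideal property of $J$, and for $j\ge 1$ the identity above, solved for the top term, gives
\[
v_m(\D^{(j)}w)=\D^{(j)}(v_m w)-\sum_{i=0}^{j-1}(\D^{(j-i)}v)_m\,\D^{(i)}w ,
\]
where $\D^{(j)}(v_m w)\in \D^{(j)}J\subseteq J'$ because $v_m w\in J$, and each remaining summand lies in $J'$ by the inductive hypothesis applied to the vector $\D^{(j-i)}v\in V$.

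Having this, $J'$ is a graded left ideal with zero degree-$0$ part, so $J'\subseteq J$ by maximality of $J$; in particular $\D^{(k)}J\subseteq J$ for all $k$, so $J$ is $\B$-stable and hence a proper graded ideal. Maximality among graded ideals is then immediate: if $I\subsetneq V$ is a graded ideal, then $\1\notin I$ (otherwise $v=v_{-1}\1\in I$ for every $v\in V$, forcing $I=V$), so $I_{(0)}\subseteq V_{(0)}=\BF\1$ forces $I_{(0)}=0$, whence $I$ is a graded $V$-submodule with zero degree-$0$ part and $I\subseteq J$. The one subtlety to keep an eye on is that over a field of characteristic $p$ one cannot reduce $\B$-stability to stability under $\D^{(1)}$ alone: since $(\D^{(1)})^{k}=k!\,\D^{(k)}$ vanishes for $k\ge p$, the divided power $\D^{(p)}$ genuinely carries new information, which is exactly why the argument is forced to treat all the $\D^{(k)}$ at once through the submodule $J'$ and the single induction above. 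Everything else is routine bookkeeping with the grading.
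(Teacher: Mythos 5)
Your proposal is correct and takes essentially the same approach as the paper: you form the $\B$-span of $J$, use the conjugation formula \eqref{conjugation} together with an induction on the order of the divided power to show this span is again a graded left ideal with vanishing degree-zero component, and then invoke maximality of $J$ to conclude $\B$-stability, hence that $J$ is a (maximal) graded ideal. The only cosmetic difference is that you use the Leibniz form $\D^{(k)}(v_mw)=\sum_{i=0}^{k}(\D^{(k-i)}v)_m\D^{(i)}w$ where the paper expands the same identity as $\D^{(n)}(u_{m}v)=\sum_{i=0}^{n}(-1)^{i}\binom{m}{i}u_{m-i}\D^{(n-i)}v$, and you spell out the existence of $J$ and the final maximality step, which the paper leaves implicit.
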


\begin{proof} To prove that $J$ is an ideal, we prove that $J$ is $\B$-stable.
Set
\begin{equation*}
    \bar{J}={\rm span}\{ \D^{(n)}v\mid  n\in \BN,\ v\in J\}.
\end{equation*}
Let $u\in V, \  v\in J,\ n\in \BN,\ m\in \BZ$. From (\ref{conjugation}) we have
\begin{eqnarray*}
\D^{(n)}(u_{m}v)=\sum_{i=0}^{n}(-1)^{i}\binom{m}{i}u_{m-i}\D^{(n-i)}v
=u_{m}\D^{(n)}v+\sum_{i=1}^{n}(-1)^{i}\binom{m}{i}u_{m-i}\D^{(n-i)}v,
\end{eqnarray*}
which gives
\begin{eqnarray}\label{proof-need-1}
u_{m}\D^{(n)}v=\D^{(n)}(u_{m}v)-\sum_{i=1}^{n}(-1)^{i}\binom{m}{i}u_{m-i}\D^{(n-i)}v.
\end{eqnarray}
Notice that $u_{m}v\in J$ as $J$ is a left ideal.
It follows from induction on $n$ and (\ref{proof-need-1}) that
$u_{m}\D^{(n)}v\in \bar{J}$ for all $m\in \BZ,\ n\in \BN$. This proves that $\bar{J}$ is a left ideal.
Recall that  $\D^{(k)}V_{(n)}\subset V_{(n+k)}$ for $k,n\in \BN$.
Then $\bar{J}$ is also graded.
Since $J_{(0)}=0$, we have
\begin{equation*}
    \D^{(n)}J\subset \bigoplus_{k\ge 1}V_{(k)}\    \    \mbox{ for }n\in \BN,
\end{equation*}
which implies $\bar{J}_{(0)}=0$.
It follows from the definition of $J$ that $\bar{J}\subset J$.
This proves that $J$ is $\B$-stable. Therefore, $J$ is an ideal of $V$.
As $V_{(0)}=\BF \1$, it can be readily seen that $J$ is the (unique) maximal graded ideal.
\end{proof}

Next, we recall the Zhu algebra associated to a $\BZ$-graded vertex algebra.
Let $V=\bigoplus_{n\in \BZ}V_{(n)}$ be a general $\BZ$-graded vertex  algebra. If $u\in V_{(n)}$ with $n\in \BZ$, we say
$u$ is {\em homogeneous of degree $n$} and we write
\begin{equation}
\deg u=n.
\end{equation}
Following Zhu (see \cite{zhu1,zhu2}, \cite{lian}),
for $a,b\in V$ with $a$ homogeneous, define
\begin{eqnarray}\label{a*b}
a*b=\Res_{z}\frac{(1+z)^{\deg a}}{z}Y(a,z)b.
\end{eqnarray}
Then define $a*b$ for general vectors $a,b\in V$ by linearity. This gives us a nonassociative algebra $(V,*)$.
On the other hand, following Dong-Ren (see  \cite{DR1}),  for $a,b\in V$ with $a$ homogeneous and for $n\in\BN$,  set
\begin{align*}
    a\circ_n b&=\Res_z \frac{(1+z)^{\deg a}}{z^{n+2}}Y(a,z)b\\
    &\phantom{:} =\sum_{i\ge 0}\binom{\deg a}{i}a_{i-n-2}b.
\end{align*}
Then denote by $O(V)$ the linear span of $a\circ_n b$ for $a,b\in V$, $n\in\BN$.

The following result was obtained in  \cite{DR1}:

\begin{proposition}\label{zhu-dong-ren}
The subspace $O(V)$ is an ideal of the nonassociative algebra $(V,*)$ and the quotient $V/O(V)$
is an associative algebra, denoted by $A(V)$, where $\1+O(V)$ is the identity.
\end{proposition}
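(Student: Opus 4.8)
The plan is to follow the standard Zhu-algebra argument, adapted to characteristic $p$ as in \cite{DR1}, showing directly that $O(V)$ absorbs multiplication on both sides and that $*$ descends to an associative product with identity $\1$. First I would establish the key identity relating the products $a*b$ and $b*a$ modulo $O(V)$, and more importantly the ``associativity up to $O(V)$'' relation $(a*b)*c - a*(b*c)\in O(V)$. The main technical engine is a family of generating-function identities: for $a$ homogeneous and $m\ge n\ge 0$,
\begin{equation*}
\Res_z\frac{(1+z)^{\deg a + n}}{z^{m+2}}Y(a,z)b \in O(V),
\end{equation*}
which follows by expanding $(1+z)^{n}=\sum_{i}\binom{n}{i}z^{i}$ and recognizing each summand as (a combination of) terms $a\circ_k b$; here one must check that the binomial coefficients $\binom{\deg a}{i}$ and $\binom{n}{i}$ cause no trouble over $\BF$, but since they are the images of honest integers this is automatic.

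Next I would prove the two absorption statements. For right absorption, suppose $c = a'\circ_n b'\in O(V)$; one writes $c$ using the Jacobi identity (or the iterate formula) in the form $Y(a',x_0)b'$ evaluated at appropriate residues, and then computes $a*c = \Res_z z^{-1}(1+z)^{\deg a}Y(a,z)c$ by commuting $Y(a,z)$ past $Y(a',x_0)$ via the Jacobi identity; the cross terms reorganize into expressions of the form $(\text{something})\circ_k(\text{something})$, landing back in $O(V)$. Left absorption, $c*a\in O(V)$, is handled the same way after using skew-symmetry $Y(u,x)v = e^{x\D}Y(v,-x)u$ together with Lemma~\ref{D-module}, which lets one trade a left multiplication for a right multiplication up to $O(V)$; this is exactly where the $\B$-module structure established earlier in the excerpt is used, and it is the reason the characteristic-$p$ proof goes through verbatim once Hasse-derivative analogues of $e^{x\D}$ are in place. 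Having both, $O(V)$ is a two-sided ideal of $(V,*)$, so $*$ descends to $A(V)=V/O(V)$.

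For associativity on the quotient, I would prove $(a*b)*c \equiv a*(b*c) \pmod{O(V)}$ for homogeneous $a,b$ by the customary double-residue computation: expand both sides using the definition of $*$, apply the Jacobi identity to the product $Y(a,x_1)Y(b,x_2)$, extract the delta-function terms, and observe that the difference is a sum of terms of the shape $\Res\, (1+z)^{\deg a + k} z^{-m-2}Y(a,z)(\cdots)$ with $m\ge k$, which lie in $O(V)$ by the generating-function identity above. Finally, $\1*b = \Res_z z^{-1}(1+z)^{0}Y(\1,z)b = \Res_z z^{-1}b = b$ using $Y(\1,z)=1$, and $b*\1 = \Res_z z^{-1}(1+z)^{\deg b}Y(b,z)\1$; since $Y(b,z)\1\in V[[z]]$ with constant term $b$, only the $z^0$ coefficient of $(1+z)^{\deg b}Y(b,z)\1$ survives, giving $b*\1 = b + (\text{terms }\D^{(k)}b,\ k\ge 1)$, and those correction terms lie in $O(V)$ by the same residue identity; hence $\1 + O(V)$ is a two-sided identity.

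The main obstacle I expect is bookkeeping in the Jacobi-identity manipulations for the associativity and right-absorption steps — keeping precise track of which residues and which shifted powers $(1+z)^{\deg a + k}$ appear, and verifying in each case that the exponent of $z$ in the denominator is large enough (i.e. $m\ge k$) to place the term in $O(V)$. There is no genuine characteristic-$p$ difficulty: every binomial coefficient that appears is the reduction of an integer coefficient, the formal-variable calculus and the delta-function identities are purely combinatorial, and the only structural input beyond characteristic zero is that $e^{x\D}=\sum_n x^n\D^{(n)}$ with Hasse derivatives $\D^{(n)}$ plays the role of $e^{x\D}$ in characteristic zero, which the excerpt has already set up. So the proof is essentially that of \cite{zhu1,zhu2} and \cite{DR1}, and I would cite \cite{DR1} for the detailed verification while indicating the structure above.
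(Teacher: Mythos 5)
The paper offers no proof of Proposition~\ref{zhu-dong-ren} at all: it is quoted verbatim from \cite{DR1}, so there is no internal argument to compare against. Your outline is the standard Zhu-type argument in the form Dong--Ren adapt to characteristic $p$, and its skeleton is the right one: the residue lemma $\Res_z\,(1+z)^{\deg a+n}z^{-m-2}\,Y(a,z)b\in O(V)$ for $m\ge n\ge 0$ (which is immediate here precisely because $O(V)$ is defined using \emph{all} $\circ_n$, $n\in\BN$, unlike in characteristic zero where it must be derived from the $n=0$ case), two-sided absorption via the Jacobi identity and skew-symmetry with the Hasse-derivative operator $e^{x\D}$, and associativity modulo $O(V)$ by the double-residue computation. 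Ending by citing \cite{DR1} (or \cite{zhu1,zhu2} for the characteristic-zero prototype) for the bookkeeping is exactly what the paper itself does.

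One step as you wrote it is incorrect, though harmlessly so: in the verification that $\1+O(V)$ is a right identity you assert that $b*\1=b+(\text{terms }\D^{(k)}b,\ k\ge1)$ and that these correction terms lie in $O(V)$. In fact $Y(b,z)\1=\sum_{k\ge0}z^{k}\D^{(k)}b$ and the expansion of $(1+z)^{\deg b}$ involve only nonnegative powers of $z$, so $\Res_z\,z^{-1}(1+z)^{\deg b}Y(b,z)\1$ picks out exactly the constant term, i.e.\ $b*\1=b$ on the nose, with no correction terms. More importantly, had such terms appeared, your fallback justification would fail: by Lemma~\ref{th:DnvmodO} one has $\D^{(k)}b\equiv\binom{-\deg b}{k}b\pmod{O(V)}$, which is in general a nonzero multiple of $b$, so $\D^{(k)}b\notin O(V)$ in general. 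The conclusion that $\1+O(V)$ is a two-sided identity is still correct, but that particular justification should be replaced by the direct computation above.
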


For $u\in V$, set
\begin{eqnarray}
[u]=u+O(V)\in A(V).
\end{eqnarray}
For $u,v\in V$, write the product of $[u]$ and $[v]$ in $A(V)$ as $[u][v]$. Then  $[u*v]=[u][v]$.

The following is a simple fact:

\begin{lemma}\label{th:DnvmodO}
Let $V$ be a $\BZ$-graded vertex  algebra and $v$ any homogeneous vector in $V$. Then
\begin{equation*}
    \D^{(k)}v\equiv \binom{-\deg v}{k}v \pmod{O(V)}\   \   \   \mbox{  for }k\in \BN.
\end{equation*}
In particular, we have $\D^{(1)}v\equiv (-\deg v)v\pmod{O(V)}$.
\end{lemma}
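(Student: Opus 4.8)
The plan is to compute $\D^{(k)}v$ modulo $O(V)$ directly from the definition of $O(V)$, using the special element $a=\1$. Recall that $\D^{(k)}v=v_{-k-1}\1$; to bring this into the shape of a $\circ_n$-relation I would instead use skew symmetry to rewrite it in terms of the action of $\1$. Since $Y(\1,x)=1_V$ gives $\1_n v=\delta_{n,-1}v$, the expression $\1\circ_k v$ is essentially a single binomial multiple of $\D^{(k)}v$ plus lower $\D$-terms; more precisely,
\begin{equation*}
\1\circ_k v=\Res_z\frac{(1+z)^{\deg\1}}{z^{k+2}}Y(\1,z)v=\Res_z\frac{1}{z^{k+2}}v=0\quad\text{for }k\ge0,
\end{equation*}
which is not quite what I want. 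So instead I would take $a=v$ homogeneous of degree $n=\deg v$ and $b=\1$: then
\begin{equation*}
v\circ_k\1=\sum_{i\ge0}\binom{\deg v}{i}v_{i-k-2}\1=\sum_{i\ge0}\binom{\deg v}{i}\D^{(k+1-i)}v,
\end{equation*}
reading $\D^{(m)}=0$ for $m<0$. This is the key identity, valid for all $k\in\BN$, and it lies in $O(V)$ by definition.

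From here the argument is a downward induction on $k$. The relation $v\circ_0\1\equiv0$ reads $\D^{(1)}v+\binom{\deg v}{1}\D^{(0)}v\equiv0$, i.e. $\D^{(1)}v\equiv-(\deg v)v\pmod{O(V)}$, which is exactly the "in particular" assertion and is the base case. Assuming the claim holds for all indices $<k$, the relation $v\circ_{k-1}\1\equiv0$ becomes
\begin{equation*}
\D^{(k)}v+\binom{\deg v}{1}\D^{(k-1)}v+\binom{\deg v}{2}\D^{(k-2)}v+\cdots+\binom{\deg v}{k}\D^{(0)}v\equiv0\pmod{O(V)}.
\end{equation*}
Substituting $\D^{(j)}v\equiv\binom{-\deg v}{j}v$ for $j<k$ from the inductive hypothesis, I get $\D^{(k)}v\equiv-\sum_{i=1}^{k}\binom{\deg v}{i}\binom{-\deg v}{k-i}v\pmod{O(V)}$, so the whole claim reduces to the purely combinatorial identity
\begin{equation*}
\sum_{i=0}^{k}\binom{m}{i}\binom{-m}{k-i}=0\qquad(k\ge1),
\end{equation*}
equivalently $\binom{-m}{k}=-\sum_{i=1}^k\binom{m}{i}\binom{-m}{k-i}$, which is just the Vandermonde convolution $\sum_{i=0}^k\binom{m}{i}\binom{-m}{k-i}=\binom{0}{k}$, an identity of polynomials in $m$ that therefore holds in $\BF$.

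The only genuine subtlety — and the step I would be most careful about — is confirming that $v\circ_k b\in O(V)$ really does hold with $b=\1$ in the modular setting and that all the binomial coefficients involved are the well-defined elements of $\BF$ described in the preliminaries (this is automatic since $\binom{m}{i}\in\BZ$ for $m\in\BZ$, $i\in\BN$). Everything else is bookkeeping: tracking the convention $\D^{(m)}=0$ for negative $m$, and invoking Vandermonde, which is valid over any commutative ring because it is an identity of integer-coefficient polynomials. I do not expect the characteristic-$p$ hypothesis to cause any trouble here, since no division by $p$ or by anything depending on $\deg v$ occurs.
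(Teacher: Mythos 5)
Your proposal is correct and follows essentially the same route as the paper: compute $v\circ_{k-1}\1=\sum_{i=0}^{k}\binom{\deg v}{i}\D^{(k-i)}v\in O(V)$, induct on $k$, and finish with the Vandermonde identity $\sum_{i=0}^{k}\binom{\deg v}{i}\binom{-\deg v}{k-i}=\binom{0}{k}=0$. The brief detour through $\1\circ_k v$ is harmless, and your remark that the binomial identity holds over $\BF$ because it is an integer-polynomial identity is exactly the point the paper relies on implicitly.
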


\begin{proof}
We use induction on $k$. It is clear for $k=0$. Assume $k\ge 1$. We have
\begin{align*}
    v\circ_{k-1}\1=\sum_{i\ge0}\binom{\deg v}{i}v_{i-k-1}\1
    =\sum_{i=0}^{k}\binom{\deg v}{i}\D^{(k-i)}v,
\end{align*}
which implies
\begin{align*}\label{need}
\sum_{i=0}^{k}\binom{\deg v}{i}\D^{(k-i)}v \in O(V).
\end{align*}
Using this and the induction hypothesis, we get
\begin{align*}
    \D^{(k)}v\equiv -\sum_{i=1}^{k}\binom{\deg v}{i}\D^{(k-i)}v
    \equiv -\sum_{i=1}^{k}\binom{\deg v}{i}\binom{-\deg v}{k-i}v
    =\binom{-\deg v}{k}v \pmod{O(V)},
\end{align*}
noticing that
\begin{align*}
    \sum_{i=0}^{k}\binom{\deg v}{i}\binom{-\deg v}{k-i}=\binom{0}{k}=0.
\end{align*}
This completes the induction and the proof.
\end{proof}

Just as in the case of characteristic zero (see \cite{DLM}; cf. \cite{zhu1,zhu2}) we have (see \cite{DR1}):

\begin{proposition}\label{WtoA(V)module}
Let $V$ be a $\BZ$-graded vertex  algebra and $W$ a $V$-module. Set
\begin{equation*}
    \Omega(W)=\{w\in W\mid u_n w=0 \  \mbox{ for homogeneous } u\in V,\  n\ge\deg u\}.
\end{equation*}
Then $\Omega(W)$ is an $A(V)$-module with $[u]$ acting as $u_{n-1}$ for $u\in V_{(n)},\ n\in \BZ$.
Furthermore, if $W=\bigoplus_{n\in \BN}W(n)$ is an $\BN$-graded $V$-module, then $W(0)$ is an $A(V)$-submodule of $\Omega(W)$.
\end{proposition}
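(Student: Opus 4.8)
The plan is to follow Zhu's argument, adapted to prime characteristic as in \cite{DR1}, in three stages: first establish that $\Omega(W)$ is stable under the operators $o(u):=u_{\deg u-1}$, defined for homogeneous $u$ and extended linearly to $V$; then show that the resulting linear map $o\colon V\to\End\Omega(W)$ annihilates $O(V)$ and satisfies $o(a*b)=o(a)o(b)$, so that it factors through a unital algebra homomorphism $A(V)\to\End\Omega(W)$; and finally treat the graded case by a grading count. For the stability, I would use Borcherds' commutator formula $[v_k,u_{n-1}]=\sum_{i\ge0}\binom{k}{i}(v_iu)_{k+n-1-i}$: fix $w\in\Omega(W)$, homogeneous $v\in V_{(m)}$ and $u\in V_{(n)}$, and $k\ge m$; then $u_{n-1}(v_kw)=0$ since $k\ge\deg v$, while for each $i$ one has $v_iu\in V_{(m+n-i-1)}$ and $k+n-1-i\ge m+n-i-1=\deg(v_iu)$, so $(v_iu)_{k+n-1-i}w=0$. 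Hence $v_k\bigl(o(u)w\bigr)=0$, i.e.\ $o(u)w\in\Omega(W)$; linearity extends this to all $u\in V$, and $o(\1)=\1_0$ acts as the identity on $W$.

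The heart of the proof is showing that $o$ factors through $A(V)$ as an algebra homomorphism, i.e.\ that $o(a\circ_n b)$ acts by $0$ on $\Omega(W)$ for $a$ homogeneous, $b\in V$, $n\in\BN$, and that $o(a*b)w=o(a)o(b)w$ for $w\in\Omega(W)$. For both I would rewrite the relevant operators as iterated residues using $a\circ_n b=\Res_{x_0}\frac{(1+x_0)^{\deg a}}{x_0^{n+2}}Y(a,x_0)b$ and $a*b=\Res_{x_0}\frac{(1+x_0)^{\deg a}}{x_0}Y(a,x_0)b$, and then invoke the associativity of $Y_W$ obtained by taking $\Res_{x_1}$ of the Jacobi identity applied to $w$, which replaces $Y_W(Y(a,x_0)b,x_2)w$ by the difference of the two orderings $Y_W(a,x_1)Y_W(b,x_2)w$ and $Y_W(b,x_2)Y_W(a,x_1)w$. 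The decisive point is that $w\in\Omega(W)$ forces $x_1^{\deg a}Y_W(a,x_1)w\in W[[x_1]]$; using this truncation together with the support of the delta functions (after the change of variables underlying the definition of $*$), the two-term expression collapses to an elementary residue in $x_0$ which vanishes whenever $n\ge0$ and equals $o(a)o(b)w$ in the case of $*$. The only role played by the characteristic is that the binomial identities invoked along the way — reindexings of $\binom{\deg a}{i}$ and the Vandermonde-type cancellation $\sum_i\binom{m}{i}\binom{-m}{k-i}=\delta_{k,0}$ already used in Lemma~\ref{th:DnvmodO} — are identities of integer-valued polynomials and hence remain valid over $\BF$, so no new difficulty arises. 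Together with $o(\1)=\mathrm{id}$, this gives a unital algebra homomorphism $A(V)\to\End\Omega(W)$, $[u]\mapsto o(u)$, making $\Omega(W)$ an $A(V)$-module with $[u]$ acting as $u_{n-1}$ for $u\in V_{(n)}$.

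For the graded statement, if $W=\bigoplus_{n\in\BN}W(n)$ then for homogeneous $u\in V_{(m)}$ and $k\ge m$ we have $u_kW(0)\subseteq W(m-k-1)=0$ since $m-k-1<0$, so $W(0)\subseteq\Omega(W)$; and $o(u)=u_{m-1}$ sends $W(0)$ into $W(m+0-(m-1)-1)=W(0)$, so $W(0)$ is an $A(V)$-submodule of $\Omega(W)$. I expect the sole genuine obstacle to be the residue and delta-function bookkeeping in the step where $O(V)$ is shown to act by zero; the stability of $\Omega(W)$ and the graded statement are routine applications of the commutator formula and of the grading axioms, respectively.
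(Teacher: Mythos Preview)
Your proposal is correct and follows precisely the standard Zhu-type argument that the paper invokes. Note that the paper does not supply its own proof here: the proposition is quoted from \cite{DR1} (with \cite{DLM}, \cite{zhu1,zhu2} for the characteristic-zero prototype), and your three-stage outline---stability of $\Omega(W)$ via the commutator formula, the residue/associativity computation showing $o$ kills $O(V)$ and respects $*$, and the grading count for $W(0)$---is exactly that argument, with the correct observation that the relevant binomial identities are integral and hence survive reduction mod $p$.
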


On the other hand, we have (see \cite{DR1}):

\begin{proposition}\label{A(V)toV}
Let $U$ be an $A(V)$-module. Then there exists  an $\BN$-graded $V$-module $L(U)=\bigoplus_{n\in \BN}L(U)(n)$
such that $L(U)(0)\cong U$ as an $A(V)$-module. Furthermore, if $U$ is irreducible,
$L(U)$ is an irreducible $\BN$-graded $V$-module.
\end{proposition}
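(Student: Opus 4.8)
The plan is to construct, following the characteristic-zero theory of Zhu and Dong--Li--Mason \cite{DLM} (and its modular adaptation in \cite{DR1}), a ``generalized Verma module'' $M(U)$ from $U$ and to realize $L(U)$ as its maximal graded-irreducible quotient; at each stage one checks that the characteristic-zero manipulations survive mod $p$, since all binomial coefficients appearing are well-defined elements of $\BF$ and since the bialgebra $\B$ together with the Hasse derivatives $\partial_x^{(n)}$ (Lemma \ref{D-module}) replace $\exp$ and $\tfrac1{n!}\partial_x^n$ throughout. \emph{Step 1: the affinization.} I would attach to $V$ the $\BZ$-graded Lie algebra $\CL(V)=(V\otimes\BF[t,t^{-1}])/\mathcal R$, where $\mathcal R$ is spanned by the elements $(\D^{(k)}a)\otimes t^m-\binom{k-m-1}{k}\,a\otimes t^{m-k}$ ($a\in V$, $k\ge 1$, $m\in\BZ$), with bracket $[a\otimes t^m,b\otimes t^n]=\sum_{i\ge 0}\binom{m}{i}(a_ib)\otimes t^{m+n-i}$ and grading $\deg(a\otimes t^m)=\deg a-m-1$ for homogeneous $a$. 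That $\CL(V)$ is a $\BZ$-graded Lie algebra is the same formal check as over $\BC$. One also records (as in \cite{DLM,DR1}) that $a\otimes t^{\deg a-1}\mapsto[a]$ induces a surjective homomorphism of associative algebras onto $A(V)$ on which $\CL(V)_{<0}$ acts trivially, so that any $A(V)$-module becomes a module for $\CL(V)_{\le 0}=\bigoplus_{n\le 0}\CL(V)_n$ with $\CL(V)_{<0}$ acting by $0$.

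\emph{Step 2: $M(U)$ and the Jacobi identity.} Inflate $U$ to a $\CL(V)_{\le 0}$-module as above and put $M(U)=\Ind_{\CL(V)_{\le 0}}^{\CL(V)}U$. By the Poincar\'e--Birkhoff--Witt theorem $M(U)=\bigoplus_{n\ge 0}M(U)(n)$ is an $\BN$-graded $\CL(V)$-module generated by $M(U)(0)=1\otimes U\cong U$, and setting $Y_{M(U)}(a,x)=\sum_{n\in\BZ}(a\otimes t^n)x^{-n-1}$ for $a\in V$ gives $Y_{M(U)}(\1,x)=1$ and $Y_{M(U)}(a,x)w\in M(U)((x))$ from the grading and the relations in $\mathcal R$. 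The heart of the proof is to show that these data satisfy the Jacobi identity of Definition \ref{va-module}, so that $M(U)$ is a genuine $V$-module. I would do this as in \cite{DLM,DR1}: the fields $Y_{M(U)}(a,x)$ ($a\in V$) are mutually local (reducing to the locality already present in $V$, transported through $\CL(V)$) and their operator products reproduce $Y_{M(U)}(a_nb,x)$, whereupon the prime-characteristic conceptual construction of vertex algebras and modules from \cite{LM} (the modular analogue of \cite{Li96}) upgrades $M(U)$ to a $V$-module. I expect this verification to be the main obstacle; a subsidiary point requiring care is that $\CL(V)_0$ really surjects onto $A(V)$ in characteristic $p$, so that $M(U)(0)$ is $U$ itself and not a proper quotient. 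Granting this, Proposition \ref{WtoA(V)module} identifies the $A(V)$-action on $M(U)(0)$ with the $\CL(V)_0$-action, i.e. the original one, so $M(U)(0)\cong U$ as $A(V)$-modules.

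\emph{Step 3: $L(U)$ and irreducibility.} Let $M'$ be the sum of all graded $V$-submodules of $M(U)$ whose intersection with $M(U)(0)$ is $0$; since a sum of two such is again one, $M'$ is the largest such submodule and $M'\cap M(U)(0)=0$. Set $L(U)=M(U)/M'$. Then $L(U)=\bigoplus_{n\ge 0}L(U)(n)$ is an $\BN$-graded $V$-module, $L(U)(0)\cong U$ as $A(V)$-modules, $L(U)$ is generated by $L(U)(0)$, and by maximality of $M'$ it has no nonzero graded submodule meeting $L(U)(0)$ trivially. Finally assume $U$ is irreducible and let $N$ be a nonzero graded $V$-submodule of $L(U)$. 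If $N(0)=0$ then $N=0$, a contradiction; hence $N(0)\ne 0$, and by Proposition \ref{WtoA(V)module} $N(0)$ is an $A(V)$-submodule of $L(U)(0)\cong U$, so $N(0)=L(U)(0)$ by irreducibility. Since $L(U)$ is generated by its degree-zero subspace, $N=L(U)$; thus $L(U)$ is irreducible as an $\BN$-graded $V$-module, which completes the argument.
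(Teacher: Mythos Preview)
The paper itself does not supply a proof of this proposition; it is quoted as a result from \cite{DR1} (the modular adaptation of Zhu's and Dong--Li--Mason's characteristic-zero theory), so there is no ``paper's own proof'' to compare against beyond that citation. Your outline is precisely a sketch of the construction carried out in \cite{DR1} and \cite{DLM}, so in that sense your approach coincides with what the paper invokes.

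That said, there is one genuine gap in your Step~2 worth flagging. The induced module $M(U)=\Ind_{\CL(V)_{\le 0}}^{\CL(V)}U$ automatically satisfies the Borcherds \emph{commutator} formula, since that is exactly the Lie bracket on $\CL(V)$; but it does \emph{not} in general satisfy the iterate (associativity) formula, and hence not the full Jacobi identity. Your sentence ``their operator products reproduce $Y_{M(U)}(a_nb,x)$'' is precisely the associativity statement that must be \emph{imposed}, not derived, at this stage: in \cite{DLM} (and in \cite{DR1}) one passes to a quotient $\bar M(U)=M(U)/W$, where $W$ is spanned by the coefficients of
\[
(x_0+x_2)^{\deg a+n}\,Y_{M(U)}(a,x_0+x_2)Y_{M(U)}(b,x_2)u-(x_0+x_2)^{\deg a+n}\,Y_{M(U)}(Y(a,x_0)b,x_2)u
\]
for homogeneous $a\in V$, $b\in V$, $u\in U$, and $n$ sufficiently large. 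Only $\bar M(U)$ is a genuine $V$-module, and one must check that $\bar M(U)(0)$ is still all of $U$ (this uses that the $A(V)$-action on $U$ already satisfies the relations coming from $O(V)$). The local-system machinery of \cite{Li96,LM} does not circumvent this step: it produces a vertex algebra of fields and a module for \emph{that} algebra, but identifying it with $V$ again requires the associativity relation on the target. Once you insert the quotient $\bar M(U)$ in place of $M(U)$, your Steps~2 and~3 go through as written.
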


The following is a technical result:

\begin{lemma}\label{singular-ideal}
Let $V$ be a $\BZ$-graded vertex  algebra and
suppose that  $v\in \Omega(V)$ is a homogeneous vector of positive degree.
Denote by $I$ the left ideal of $V$
generated by $\D^{(n)}v$ for all $n\ge 0$.
Then $I$ is a proper ideal of $V$.
\end{lemma}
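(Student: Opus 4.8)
The plan is to show that $I$, which we already know is a left ideal, is actually a (two-sided) ideal and is proper. Since a left ideal of a vertex algebra that is $\B$-stable is automatically an ideal (by skew symmetry, as noted after the definition of ideal in the excerpt), and $I$ is by construction $\B$-stable — it is generated as a left ideal by the $\B$-orbit $\{\D^{(n)}v : n\ge 0\}$ of $v$, and Lemma~\ref{max-ideal}'s argument (equation (\ref{proof-need-1})) shows precisely that the left ideal generated by a $\B$-stable set is $\B$-stable — it follows that $I$ is an ideal. So the real content is properness: I must show $\1\notin I$, equivalently $I\ne V$.

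For properness I would use the grading. The hypothesis $v\in\Omega(V)$ means $u_n v=0$ for all homogeneous $u\in V$ and all $n\ge\deg u$; in particular, taking $u=v$ itself, $v$ is annihilated by $v_n$ for $n\ge\deg v$, but more importantly the condition $v\in\Omega(V)$ combined with $\deg v>0$ should force $I$ to live in positive degrees. Concretely: first observe $\D^{(n)}v\in V_{(\deg v+n)}$, so every generator has degree $\ge\deg v\ge 1$. Then I want to argue that applying arbitrary modes $u_m$ (with $u$ homogeneous) to these generators never produces anything in $V_{(0)}=\BF\1$ — wait, this needs care since $u_m$ with $m$ large lowers degree a lot. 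The cleaner route: show $I\subseteq\bigoplus_{k\ge 1}V_{(k)}$ directly, i.e. $I$ has zero component in degree $0$. Since $V_{(0)}=\BF\1$ (this should be part of the ambient setup, matching Lemma~\ref{max-ideal}; if not literally assumed here I would note that the relevant hypothesis is $V_{(0)}=\BF\1$, which holds in all our applications), it suffices to show $\1\notin I$.

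The key step is this: consider the maximal graded $V$-submodule $J$ of $V$ with $J_{(0)}=0$ from Lemma~\ref{max-ideal}. I claim every generator $\D^{(n)}v$ lies in $J$. Indeed, the left ideal generated by $v$ is the $V$-submodule of the adjoint module generated by $v$; because $v\in\Omega(V)$ is homogeneous of positive degree, the submodule $V\cdot v$ (spanned by $u_m v$ for $u$ homogeneous, $m\in\BZ$) has its lowest-degree piece in degree $\ge\deg v\ge 1$ — here I use that $u_m v$ has degree $\deg u - m - 1 + \deg v$ and the $\Omega$-condition kills exactly those modes $u_m$ that would drop below degree $\deg v$... actually the $\Omega$-condition gives $u_m v=0$ for $m\ge\deg u$, so the nonzero contributions have $m\le\deg u-1$, hence $\deg(u_m v)=\deg u-m-1+\deg v\ge \deg v\ge 1$. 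Thus $V\cdot v$ is a graded $V$-submodule with trivial degree-$0$ part, so $V\cdot v\subseteq J$ by maximality of $J$; since $J$ is $\B$-stable (Lemma~\ref{max-ideal}), $\D^{(n)}v\in J$ for all $n$, and therefore $I\subseteq J$. Since $J$ is a proper ideal (it is the maximal graded ideal and $\1\notin J$ as $J_{(0)}=0$), we conclude $I\subseteq J\subsetneq V$, so $I$ is proper.

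The main obstacle I anticipate is the bookkeeping in the degree computation for $u_m v$ — one must be careful that "$u$ homogeneous" ranges over a spanning set so that a general element of $V\cdot v$ is a finite sum of such $u_m v$, and that the $\Omega(V)$ condition is applied with the correct inequality ($n\ge\deg u$ annihilates) to get the degree bound $\deg v$ rather than something weaker. A secondary point to verify is that $I$ being a left ideal generated by a $\B$-stable set is genuinely $\B$-stable: this is exactly the computation in equation (\ref{proof-need-1}) showing $u_m\D^{(n)}v$ lies in the $\B$-span of $\{u'_k v\}$, done by induction on $n$; I would either cite the proof of Lemma~\ref{max-ideal} or reproduce that one line. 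Everything else is routine.
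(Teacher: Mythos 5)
Your first half (that $I$ is an ideal) is essentially the paper's argument: both reduce the question to $\B$-stability of $I$, using the spanning description of the submodule generated by the vectors $\D^{(n)}v$ together with the commutation identity behind (\ref{proof-need-1}), so that part is fine. For properness, however, you take a different route — through Lemma \ref{max-ideal} — and as written it does not prove the lemma in the stated generality. Lemma \ref{singular-ideal} is asserted for an arbitrary $\BZ$-graded vertex algebra, while Lemma \ref{max-ideal} carries the standing hypotheses $V_{(n)}=0$ for $n<0$ and $V_{(0)}=\BF\1$. You flag the hypothesis $V_{(0)}=\BF\1$, but the one your argument genuinely cannot do without is the vanishing in negative degrees: it is what makes the maximal graded submodule $J$ with $J_{(0)}=0$ $\B$-stable (if some $J_{(-k)}\ne 0$ with $k>0$, then $\D^{(k)}J_{(-k)}$ lands in degree $0$ and the maximality argument in the proof of Lemma \ref{max-ideal} collapses), and without $\B$-stability of $J$ your key step $\D^{(n)}v\in J$, hence $I\subseteq J$, is unjustified. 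The paper's proof needs neither hypothesis: it shows directly that $I\subseteq\bigoplus_{k\ge 1}V_{(k)}$, the only nontrivial point being the vanishing $u_{m}\D^{(n)}v=0$ for $m\ge \deg u+n$, proved by induction on $n$ from the identity (\ref{proof-need-1}) and the $\Omega(V)$-condition $u_{m}v=0$ for $m\ge\deg u$; then $\1\in V_{(0)}$, $\1\ne 0$ gives $\1\notin I$.

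That said, your central observation — that the $\Omega(V)$-condition applied to $v$ itself forces the submodule $V\cdot v=\mathrm{span}\{u_{m}v\}$ into degrees $\ge\deg v\ge 1$ — already contains a hypothesis-free proof, and you do not need $J$ at all: let $\bar{I}$ be the $\B$-span of $V\cdot v$. The computation (\ref{proof-need-1}) (induction on the order of the Hasse derivative, exactly as in the proof of Lemma \ref{max-ideal}) shows $\bar{I}$ is a left ideal; it contains every $\D^{(n)}v$ since $v=\1_{-1}v\in V\cdot v$, so $I\subseteq\bar{I}$; and since each $\D^{(k)}$ raises degree by $k$, $\bar{I}\subseteq\bigoplus_{k\ge\deg v}V_{(k)}$, whence $\1\notin I$ and $I$ is proper. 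This repaired version is valid for any $\BZ$-graded vertex algebra and in fact sidesteps the paper's induction proving $u_{m}\D^{(n)}v=0$ for $m\ge\deg u+n$. As submitted, though, your proof establishes the lemma only under the extra hypotheses of Lemma \ref{max-ideal} (which do hold for $V_{\hat\fg}(\ell,0)$, the paper's one application, but not for the lemma as stated).
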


\begin{proof}  We first show that $I$ is an ideal. For this,
it suffices to show that $I$ is $\B$-stable. By definition, $I$ is the $V$-submodule of $V$
generated by $\D^{(n)}v$ for $n\ge 0$. From a result of \cite{DM}, \cite{li-thesis}, we have
\begin{equation*}
    I={\rm span} \{ u_{m}\D^{(n)}v\mid  u\in V,\ m\in \BZ,\ n\in \BN\}.
\end{equation*}
For $u\in V,\ m\in \BZ$ and for $n,j\ge0$, we have
\begin{align*}\label{temp-need}
    \D^{(j)}(u_m\D^{(n)}v)&=\sum_{i=0}^j (-1)^i \binom{m}{i}u_{m-i}\D^{(j-i)}\D^{(n)}v\nonumber\\
    &=\sum_{i=0}^j (-1)^i \binom{m}{i}\binom{n+j-i}{n}u_{m-i}\D^{(n+j-i)}v \in I.
\end{align*}
Then $I$ is a $\B$-submodule and hence an ideal of $V$.

Next, we prove that $I\ne V$.  For $n\in \BN$, we have $\deg \D^{(n)}v=n+\deg v$.
As $v$ is homogeneous of positive degree, we have
$\D^{(n)}v\in \bigoplus_{k\ge 1}V_{(k)}$.
Let $u$ be any homogeneous element of $V$ and let $m\in \BZ,\ n\in \BN$. Note that $\deg u_{m}=\deg u-m-1$.
If $m\le \deg u-1+n$, we have
\begin{equation*}
    \deg u_{m}\D^{(n)}v=\deg u-m-1+n+\deg v\ge \deg v\ge 1,
\end{equation*}
so that $u_{m}\D^{(n)}v\in \bigoplus_{k\ge 1}V_{(k)}$.
If we can show that $u_{m}\D^{(n)}v=0$ for $m\ge \deg u+n$, then we will have
$I\subset \bigoplus_{k\ge 1}V_{(k)}$, which implies that $I$ is proper.
Now, we use induction on $n$ to show that  $u_{m}\D^{(n)}v=0$
 whenever $m\ge \deg u+n$. It is true for $n=0$ as $v\in  \Omega(V)$. Assume that $n\ge 1$ and
 $u_{m'}\D^{(j)}v=0$ for $0\le j\le n-1$ and for all $m'\ge \deg u+j$. Note that
\begin{eqnarray*}
\D^{(n)}(u_{m}v)=\sum_{i=0}^{n}(-1)^{i}\binom{m}{i}u_{m-i}\D^{(n-i)}v
=u_{m}\D^{(n)}v+\sum_{i=1}^{n}(-1)^{i}\binom{m}{i}u_{m-i}\D^{(n-i)}v.
\end{eqnarray*}
Using this and then using the induction hypothesis we obtain
\begin{eqnarray}
u_{m}\D^{(n)}v=\D^{(n)}(u_{m}v)-\sum_{i=1}^{n}(-1)^{i}\binom{m}{i}u_{m-i}\D^{(n-i)}v=0
\end{eqnarray}
for $m\ge \deg u+n$, noticing that $u_{m}v=0$ as $m\ge \deg u$.
This completes the induction and the whole proof.
\end{proof}

Let $V$ be a general vertex algebra (over $\BF$). Set
\begin{equation}
C_{2}(V)={\rm span}\{ u_{-2-n}v\mid u,v\in V,\ n\in \BN\}.
\end{equation}
Notice that for $v\in V$, we have
\begin{equation}
\D^{(k)}v=v_{-k-1}\1\in C_{2}(V)\    \    \    \mbox{ for }k\ge 1.
\end{equation}
Define a binary operation on $V$ by $u\cdot v=u_{-1}v$ for $u,v\in V$.
The same arguments of Zhu in the case of characteristic $0$ (see \cite{zhu1,zhu2}) show:

\begin{proposition}
The subspace $C_{2}(V)$ is an ideal of the nonassociative algebra $(V,\cdot)$ and
the quotient $V/C_{2}(V)$ is a commutative associative algebra with $\1+C_{2}(V)$ as an identity.
\end{proposition}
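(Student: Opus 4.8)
The plan is to follow Zhu's original argument (see \cite{zhu1,zhu2}) essentially verbatim: the vertex-algebra identities that enter are purely formal consequences of the Jacobi identity and remain valid over $\BF$, so nothing genuinely new is needed in characteristic $p>2$. Concretely, four things must be checked: (a) $C_2(V)$ is a two-sided ideal of $(V,\cdot)$; (b) the induced product on $V/C_2(V)$ is commutative; (c) it is associative; (d) $\1+C_2(V)$ is a two-sided identity. Two elementary observations will be used throughout: first, $u_m w\in C_2(V)$ whenever $u,w\in V$ and $m\le -2$, which is just the definition; second, $\D^{(k)}a=a_{-k-1}\1\in C_2(V)$ for every $a\in V$ and every $k\ge 1$, as already noted in the text. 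I will also use Borcherds' commutator formula $[u_m,v_n]=\sum_{i\ge 0}\binom{m}{i}(u_iv)_{m+n-i}$ and the iterate formula $(u_{-1}v)_n=\sum_{i\ge 0}u_{-1-i}v_{n+i}+\sum_{i\ge 0}v_{n-1-i}u_i$, both of which hold for vertex algebras over $\BF$.

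I would dispose of (d) first: since $Y(\1,x)=1_V$ we have $\1_{-1}=\mathrm{id}_V$ and hence $\1\cdot v=v$, while $v\cdot\1=v_{-1}\1=\D^{(0)}v=v$, so $\1$ is already a genuine identity for $(V,\cdot)$. For (b), I would apply skew symmetry $Y(u,x)v=e^{x\D}Y(v,-x)u$ and compare the coefficients of $x^0$ on both sides; this gives $u_{-1}v=\sum_{k\ge 0}(-1)^k\D^{(k)}(v_{k-1}u)$, whose $k=0$ term is $v_{-1}u$ and whose remaining terms lie in $C_2(V)$ by the second observation. Hence $u\cdot v-v\cdot u\in C_2(V)$.

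For (a), thanks to (b) it suffices to prove $V\cdot C_2(V)\subseteq C_2(V)$, the inclusion $C_2(V)\cdot V\subseteq C_2(V)$ then being automatic. Given a spanning vector $v_{-2-n}w$ of $C_2(V)$ (with $n\in\BN$) and an arbitrary $u\in V$, I would write $u_{-1}(v_{-2-n}w)=v_{-2-n}(u_{-1}w)+[u_{-1},v_{-2-n}]w$: the first summand lies in $C_2(V)$ because $-2-n\le -2$, and the commutator formula gives $[u_{-1},v_{-2-n}]=\sum_{i\ge 0}(-1)^i(u_iv)_{-3-n-i}$, every term of which acts with index $\le -3$ and so lands in $C_2(V)$. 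For (c), the iterate formula with $n=-1$ yields $(u_{-1}v)_{-1}w=\sum_{i\ge 0}u_{-1-i}(v_{-1+i}w)+\sum_{i\ge 0}v_{-2-i}(u_iw)$; here the $i=0$ term of the first sum is exactly $u_{-1}(v_{-1}w)=u\cdot(v\cdot w)$, while every other term on the right has an operator index $\le -2$ and therefore lies in $C_2(V)$. Thus $(u\cdot v)\cdot w-u\cdot(v\cdot w)\in C_2(V)$, proving (c). Combining (a)--(d), $\cdot$ descends to a commutative, associative, unital product on $V/C_2(V)$.

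I do not expect a real obstacle here; the only point demanding care is the bookkeeping of signs and binomial coefficients (e.g.\ the $(-1)^k$ produced by skew symmetry, and $\binom{-1}{i}=(-1)^i$ in the iterate and commutator identities) over a field of characteristic $p$. Since those identities are formal, each step carries over unchanged, exactly as in \cite{zhu1,zhu2}.
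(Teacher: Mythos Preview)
Your proof is correct and follows precisely the approach the paper invokes: the paper does not give any details but simply states that ``the same arguments of Zhu in the case of characteristic $0$ (see \cite{zhu1,zhu2}) show'' the result, and you have carefully spelled out those arguments. The computations with skew symmetry, the commutator formula, and the iterate formula are all accurate over $\BF$ with $p>2$.
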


As in the case of characteristic $0$ (see \cite{zhu1,zhu2}), we formulate the following notion:

\begin{definition}
 A vertex algebra $V$ is said to be {\em $C_2$-cofinite} if $\dim V/C_2(V)<\infty$.
\end{definition}

The following result is useful in determining the commutative associative algebra $V/C_2(V)$:

\begin{lemma}\label{symmetric-algebra}
Let $V$ be a vertex algebra and let $U$ be a subspace of $V$ such that
\begin{equation}\label{span-V}
V={\rm span} \{ u^{(1)}_{-n_1}\cdots u^{(r)}_{-n_r}\1\mid  r\ge 0,\  u^{(i)}\in U,\  n_i\ge 1\}.
\end{equation}
Then there exists a surjective algebra homomorphism $\pi$ from (the symmetric algebra) $S(U)$ to $V/C_2(V)$
such that  $\pi (u)=u+C_{2}(V)$ for $u\in U$.
\end{lemma}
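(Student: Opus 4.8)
The plan is to obtain $\pi$ from the universal property of the symmetric algebra, and then to establish surjectivity by reducing the spanning set (\ref{span-V}) modulo $C_2(V)$. For $v\in V$ write $\bar v=v+C_2(V)$, and recall from the preceding proposition that $V/C_2(V)$ is a commutative associative unital algebra under $\bar u\cdot\bar v=\overline{u_{-1}v}$, with identity $\bar\1$. Since $S(U)$ is the free commutative unital $\BF$-algebra on the vector space $U$ and $V/C_2(V)$ is commutative associative unital, the $\BF$-linear map $U\to V/C_2(V)$, $u\mapsto\bar u$, extends uniquely to a unital algebra homomorphism $\pi:S(U)\to V/C_2(V)$ with $\pi(u)=\bar u$ for $u\in U$. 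It then remains only to prove that $\pi$ is surjective, and for this it suffices to show that $V/C_2(V)$ is spanned by the products $\bar u^{(1)}\cdots\bar u^{(r)}$ with $r\ge 0$ and $u^{(i)}\in U$, since each such product equals $\pi(u^{(1)}\cdots u^{(r)})$ and hence lies in the image of $\pi$.

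Two elementary observations drive the argument. First, by the very definition of $C_2(V)$ we have $u_{-n}v\in C_2(V)$ for all $u,v\in V$ and all $n\ge 2$. Second, since $C_2(V)$ is an ideal of $(V,\cdot)$, if $w\equiv w'\pmod{C_2(V)}$ then $u_{-1}w\equiv u_{-1}w'\pmod{C_2(V)}$ for every $u\in V$. Now take a spanning monomial $u^{(1)}_{-n_1}\cdots u^{(r)}_{-n_r}\1$ from (\ref{span-V}), with $u^{(i)}\in U$ and $n_i\ge 1$, and induct on $r$. For $r=0$ the vector is $\1$, and $\bar\1=\pi(1)$. For $r\ge 1$ put $w=u^{(2)}_{-n_2}\cdots u^{(r)}_{-n_r}\1$, which is again of the form in (\ref{span-V}) with length $r-1$. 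If $n_1\ge 2$ then $u^{(1)}_{-n_1}w\in C_2(V)$ by the first observation, so its image in $V/C_2(V)$ is $0$. If $n_1=1$, then by the induction hypothesis $\bar w$ is an $\BF$-linear combination of products $\bar v^{(1)}\cdots\bar v^{(s)}$ with $v^{(j)}\in U$; applying the second observation and then associativity of $V/C_2(V)$, we get that $u^{(1)}_{-1}w+C_2(V)=\bar u^{(1)}\cdot\bar w$ is the corresponding $\BF$-linear combination of $\bar u^{(1)}\bar v^{(1)}\cdots\bar v^{(s)}$, each of the required form. Hence $V/C_2(V)$ is spanned by such monomials, $\pi$ is surjective, and the lemma follows.

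The only point needing care is the inductive step: one must invoke that the product $\cdot$ descends to the quotient $V/C_2(V)$ — that is, that $C_2(V)$ is a $(V,\cdot)$-ideal, which is already established — so that replacing $w$ by a $C_2(V)$-equivalent expression before multiplying on the left by $u^{(1)}_{-1}$ is legitimate; the remaining manipulations are routine. I would expect no further obstacle, since commutativity and associativity of $V/C_2(V)$ are exactly what make the universal property of $S(U)$ applicable.
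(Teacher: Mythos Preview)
Your proof is correct and follows essentially the same approach as the paper's. The only cosmetic difference is that the paper, instead of invoking the ideal property of $C_2(V)$ from the preceding proposition, directly verifies via the commutator formula that $a_{-n}C_2(V)\subset C_2(V)$ for all $a\in V$ and $n\ge 0$, and then concludes $V=C_2(V)+\mathrm{span}\{u^{(1)}_{-1}\cdots u^{(r)}_{-1}\1\}$ in one step; your induction on $r$ using only closure under $a_{-1}$ accomplishes the same thing.
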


\begin{proof} For $u\in V$, set $\bar{u}=u+C_2(V)\in V/C_2(V)$.
Since $V/C_2(V)$ is a commutative associative algebra, there exists an algebra homomorphism
$\pi: S(U)\rightarrow V/C_2(V)$ such that $\pi (u)=\bar{u}$ for $u\in U$. Then it remains to show that $\pi$ is onto.
 Let $a,u,v\in V,\ n,k\in \BN$. We have
\begin{equation*}
    a_{-n}u_{-2-k}v=u_{-2-k}a_{-n}v+\sum_{i\ge 0}\binom{-n}{i}(a_iu)_{-2-k-n-i}v\in C_2(V).
\end{equation*}
Thus $a_{-n}C_2(V)\subset C_2(V)$ for $a\in V,\ n\ge 0$.
Combining this with the spanning condition (\ref{span-V}) we get
\begin{equation*}
    V=C_2(V)+{\rm span} \{ u^{(1)}_{-1}\cdots u^{(r)}_{-1}\1\mid  r\ge 0,\  u^{(i)}\in U\},
\end{equation*}
which implies that algebra $V/C_2(V)$ is generated by $\bar{u}$ for $u\in U$.
It then follows that $\pi$ is onto.
\end{proof}

Let $V$ be a vertex algebra.  A {\em $(V,\B)$-module} is a $V$-module $(W,Y_W)$ equipped with
a $\B$-module structure such that
\begin{equation}
    e^{x\D}Y_W(v,z)e^{-x\D}=Y_W(e^{x\D}v,z)\  \  \   \mbox{ for }v\in V.
\end{equation}
On the other hand, let $A$ be a $\B$-module algebra. An {\em $(A,\B)$-module} is a module $W$ for $A$ and $\B$ such that
\begin{equation}
    e^{z\D}ae^{-z\D}w=(e^{z\D}a)w\   \   \  \mbox{ for }a\in A,\ w\in W.
\end{equation}
It can be readily seen that a $\B$-module algebra $A$ itself is an $(A,\B)$-module.

\section{Virasoro vertex algebra $V_{\Vir}(c,0)$ and its quotients}
In this  section, we first recall the Virasoro algebra over $\BF$ and the definition of a vertex operator algebra,
and then we introduce the Virasoro vertex  algebra
$V_{\Vir}(c,0)$ associated to the Virasoro algebra of central charge $c\in \BF$.
Furthermore, we study quotient vertex algebras of the Virasoro vertex operator algebra
$V_{\Vir}(c,0)$ by ideals associated to the $p$-center of the universal enveloping algebra $U(\Vir)$.

The Virasoro algebra over $\BF$, which is denoted by $\Vir$, is a Lie algebra with a basis
$\{L_n\mid n\in\BZ\}\cup\{\bc\}$, where $\bc$ is central and
\begin{equation}\label{Virasoro-relations}
    [L_m,L_n]=(m-n)L_{m+n}+\frac{1}{2}\binom{m+1}{3}\delta_{m+n,0}\bc
\end{equation}
for $m,n\in \BZ$. (Note that $p\ne 2$ by assumption.)

We call a $\Vir$-module $W$ a {\em locally truncated module} if for every $w\in W$,
$L(n)w=0$ for $n$ sufficiently large,
where $L(n)$ denotes the operator on $W$ corresponding to $L_n$.
On the other hand, if $\bc$ acts as a scalar $c\in\BF$ on a $\Vir$-module $W$,
we say $W$ is of {\em central charge} $c$.

Following  \cite{DR1}, we define a notion of vertex operator algebra as follows:

\begin{definition}\label{def-voa}
A {\em vertex operator algebra} is a $\BZ$-graded vertex algebra
$V=\bigoplus_{n\in \BZ}V_{(n)}$ equipped with a vector $\omega\in V_{(2)}$, satisfying the conditions that
$\dim V_{(n)}<\infty$ for all $n\in \BZ$ and $V_{(n)}=0$ for $n$ sufficiently negative
and that
the components of the associated vertex operator written as $Y(\omega,x)=\sum_{n\in \BZ}L(n)x^{-n-2}$
provide a representation on $V$ of $\Vir$ of central charge $c\in \BF$ (an independent parameter) and
\begin{eqnarray}
&&L(0)u=nu \    \    \mbox{ for }u\in V_{(n)},\ n\in \BZ,\\
&&Y(L(-1)v,x)=\frac{d}{dx}Y(v,x)\    \    \mbox{ for }v\in V.
\end{eqnarray}
Note that the last condition amounts to $L(-1)=\D^{(1)}$ on $V$.
\end{definition}

Now, we come back to the Virasoro algebra $\Vir$. First, we have:

\begin{lemma}
The Virasoro algebra $\Vir$ is a $\B$-module Lie algebra with the action given by
\begin{equation}
    \D^{(k)}(L_m)=(-1)^k \binom{m+1}{k}L_{m-k},\   \   \   \    \D^{(k)}\bc=\delta_{k,0}\bc\  \  \
    \mbox{ for }k\in\BN, \  m\in\BZ.
\end{equation}
Furthermore, $U(\Vir)$ is a $\B$-module algebra.
\end{lemma}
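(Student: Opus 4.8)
The plan is to verify directly that the displayed formulas endow $\Vir$ with a $\B$-module structure and that this structure makes $\Vir$ a $\B$-module Lie algebra; the final assertion will then follow immediately from the general fact recalled above that $U(\fg)$ is a $\B$-module algebra whenever $\fg$ is a $\B$-module Lie algebra.

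First I would confirm the $\B$-module axioms $\D^{(0)}=1$ and $\D^{(k)}\D^{(l)}=\binom{k+l}{l}\D^{(k+l)}$ from \eqref{eD-multiplication}, acting on the basis $\{L_m\mid m\in\BZ\}\cup\{\bc\}$. Everything is trivial on $\bc$, so only $L_m$ needs attention, and there applying $\D^{(k)}\D^{(l)}$ gives $(-1)^{k+l}\binom{m+1}{l}\binom{(m+1)-l}{k}L_{m-k-l}$; thus the claim reduces to the ``subset of a subset'' identity
\[
\binom{m+1}{l}\binom{(m+1)-l}{k}=\binom{k+l}{l}\binom{m+1}{k+l}.
\]
Since both sides are polynomial functions of $m$ agreeing for all integers $m\ge k+l$, the identity holds over $\BZ$, hence over $\BF$.

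Next I would check the $\B$-module Lie algebra condition, which by \eqref{ecoproduct-D} reads $\D^{(k)}[u,v]=\sum_{i=0}^{k}[\D^{(k-i)}u,\D^{(i)}v]$; by bilinearity it suffices to take $u,v$ among the basis vectors. If $u$ or $v$ equals $\bc$, both sides vanish for $k\ge1$ and coincide trivially for $k=0$. For $u=L_m$ and $v=L_n$ I would split the bracket \eqref{Virasoro-relations} into its Witt part $(m-n)L_{m+n}$ and its central part $\frac12\binom{m+1}{3}\delta_{m+n,0}\bc$. For the Witt part, expanding the right-hand side and using $i\binom{n+1}{i}=(n+1)\binom{n}{i-1}$ together with the Vandermonde identity reduces the claim to
\[
(m-n-k)\binom{m+n+2}{k}+2(n+1)\binom{m+n+1}{k-1}=(m-n)\binom{m+n+1}{k},
\]
which follows from Pascal's rule and $k\binom{N}{k}=(N+1-k)\binom{N}{k-1}$. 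For the central part, the condition becomes the assertion that
\[
\sum_{i=0}^{k}\binom{m+1}{k-i}\binom{n+1}{i}\binom{m-k+i+1}{3}=0\qquad\text{whenever }m+n=k\ge1;
\]
substituting $j=k-i$, applying the subset-of-a-subset identity $\binom{m+1}{j}\binom{(m+1)-j}{3}=\binom{m+1}{3}\binom{m-2}{j}$, and then using Vandermonde, the left-hand side collapses to $\binom{m+1}{3}\binom{m+n-1}{k}=\binom{m+1}{3}\binom{k-1}{k}=0$. This last binomial identity is the elementary fact established in the Appendix.

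The only genuine labor is the bookkeeping in the Lie-algebra step, and the single delicate point is the vanishing of the central-term sum; once it is recognized as an instance of subset-of-a-subset followed by Vandermonde — so that it collapses to $\binom{k-1}{k}=0$ — the verification is routine. Granting both axioms, $U(\Vir)$ inherits the induced $\B$-module algebra structure, completing the proof.
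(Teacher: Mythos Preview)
Your proposal is correct and follows essentially the same route as the paper: a direct verification of the $\B$-module Lie algebra identity on basis vectors, reducing to the two binomial identities proved in the Appendix (Lemma~\ref{simple-facts}), with the $U(\Vir)$ statement then inherited from the general fact for $\B$-module Lie algebras. The only cosmetic differences are that you verify the $\B$-module axioms explicitly (the paper calls this ``straightforward'') and that you split the Witt-part coefficient as $(m-n-k)+2i$ rather than the paper's three-term split $(m-n)-(k-i)+i$; both lead to the same Vandermonde/Pascal manipulation.
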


\begin{proof} It is straightforward to show that $\Vir$ with the given action of $\B$
is a $\B$-module. Furthermore, for $k\in\BN$, $m,n\in\BZ$, using Lemma \ref{simple-facts} we have
\begin{align*}
    &\quad\ \D^{(k)}[L_{m},L_{n}]\\
    &=(-1)^k(m-n)\binom{m+n+1}{k}L_{m+n-k}+\frac{1}{2}\binom{m+1}{3}\delta_{m+n,0}\delta_{k,0}\bc\\
    &=\sum_{i=0}^k (-1)^k(m-n-k+2i)\binom{m+1}{k-i}\binom{n+1}{i}L_{m+n-k}+\frac{1}{2}\binom{m+1}{3}\delta_{m+n,0}\delta_{k,0}\bc\\
    &=\sum_{i=0}^k[\D^{(k-i)}L_m,\D^{(i)}L_n].
\end{align*}
This shows that $\Vir$ is a $\B$-module Lie algebra. From this the second assertion follows (cf. \cite[Lemma~3.2]{LM}).
\end{proof}

Set
\begin{equation}
    L(x)=\sum_{n\in\BZ}L_nx^{-n-2}\in \Vir[[x,x^{-1}]].
\end{equation}
Then the defining relations \eqref{Virasoro-relations} amount to
\begin{equation*}
    [L(x_1),L(x_2)]=(\partial_{x_2}^{(1)} L(x_2))x_2^{-1}\delta\left(\frac{x_1}{x_2}\right)-2L(x_2)\partial_{x_1}^{(1)} \left(x_2^{-1}\delta\left(\frac{x_1}{x_2}\right)\right)
    -\frac{1}{2}\partial_{x_1}^{(3)}\left(x_2^{-1}\delta\left(\frac{x_1}{x_2}\right)\right)\bc.
\end{equation*}
It follows that
\begin{equation}\label{eq:local-Vir}
    (x_1-x_2)^4[L(x_1),L(x_2)]=0.
\end{equation}

Equip the Virasoro algebra $\Vir$ with the following $\BZ$-grading
\begin{equation*}
    \Vir=\coprod_{n\in\BZ}\Vir_{(n)},
\end{equation*}
where
\begin{equation*}
    {\Vir}_{(0)}=\BF L_0\oplus \BF\bc\qquad \text{ and } \qquad {\Vir}_{(n)}=\BF L_{-n}\ \text{ for }n\ne0.
\end{equation*}
Set
\begin{equation*}
    {\Vir}_{-}=\coprod_{n\le -1} {\Vir}_{(n)}=\coprod_{n\ge 1} \BF L_{n} \qquad \text{ and }
    \qquad \CL_{\ge -1}=\coprod_{n\ge -1} \BF L_{n},
\end{equation*}
which are both Lie subalgebras.

Let $c\in\BF$. View $\BF$ as an $(\CL_{\ge -1}\oplus \BF \bc)$-module with
$\bc$ acting as scalar $c$ and with $\CL_{\ge -1}$ acting trivially.
Denote this $(\CL_{\ge -1}\oplus \BF \bc)$-module by $\BF_c$.
Then form an induced module
\begin{equation}
    V_{\Vir}(c,0)=U(\Vir)\otimes_{U(\CL_{\ge -1}+\BF \bc)}\BF_c.
\end{equation}
Set
\begin{equation}
    \1=1\otimes 1.
\end{equation}
It can be readily seen that $\CL_{\ge -1}+\BF (\bc-c)$ is a $\B$-submodule of $\Vir$,
so that $U(\Vir)(\CL_{\ge -1}+\BF (\bc-c))$ is a $\B$-submodule.
Since
\begin{equation*}
    V_{\Vir}(c,0)\cong U(\Vir)/U(\Vir)(\CL_{\ge -1}+\BF (\bc-c))
\end{equation*}
as a $U(\Vir)$-module, it follows that $V_{\Vir}(c,0)$ is a $(U(\Vir),\B)$-module
(cf.~\cite{LM}).
Furthermore, by \cite[Lemma~3.2]{LM}, we have
\begin{equation}\label{eq:conj-Vir}
    e^{z\D}L(x)e^{-z\D}=(e^{z\D}L(x))=L(x+z)
\end{equation}
on $V_{\Vir}(c,0)$.
With \eqref{eq:local-Vir} and \eqref{eq:conj-Vir},
by \cite[Theorem~2.14]{LM} we immediately have (cf.~\cite{DR2}):

\begin{proposition}
For any $c\in \BF$, there exists a vertex algebra structure on $V_{\Vir}(c,0)$,
which is uniquely determined by the condition that $\1$ is the vacuum vector and
\begin{equation*}
    Y(\omega,x)=L(x)\in(\End V_{\Vir}(c,0))[[x,x^{-1}]].
\end{equation*}
\end{proposition}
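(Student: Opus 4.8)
The plan is to apply the prime-characteristic conceptual construction of vertex algebras, \cite[Theorem~2.14]{LM}, to the single generating field $L(x)=Y(\omega,x)$ with $\omega=L_{-2}\1$. First I would record the elementary structural facts: by the Poincar\'e--Birkhoff--Witt theorem for $U(\Vir)$ together with the definition of the induced module, $V_{\Vir}(c,0)$ is spanned by the vectors $L_{-n_1}\cdots L_{-n_r}\1$ with $r\ge 0$ and $n_1\ge\cdots\ge n_r\ge 1$; in particular the coefficients of the single formal series $L(x)=\sum_{n\in\BZ}L_nx^{-n-2}$, acting on $V_{\Vir}(c,0)$, generate $V_{\Vir}(c,0)$ from $\1$ in the sense demanded by that theorem. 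Moreover $L(x)\1=\sum_{n\ge 2}L_{-n}\1\,x^{n-2}\in V_{\Vir}(c,0)[[x]]$ has constant term $\omega$, so the creation property holds for $\omega$, while $L_n\1=0$ for $n\ge -1$ by construction.

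The two substantive hypotheses of \cite[Theorem~2.14]{LM} have in effect already been checked in the excerpt. Locality of the one-element set $\{L(x)\}$ is precisely \eqref{eq:local-Vir}, i.e.\ $(x_1-x_2)^4[L(x_1),L(x_2)]=0$, which is merely the generating-function rewriting of the defining relations \eqref{Virasoro-relations}; the truncation order $4$ comes from the term $\partial_{x_1}^{(3)}$ of the formal delta function attached to the central contribution, and everything is well defined over $\BF$ since $\binom{m+1}{3}$ is an integral binomial coefficient and only $p\ne 2$ (needed for the coefficient $\tfrac12$) is used. Compatibility with the bialgebra $\B$ is the conjugation formula \eqref{eq:conj-Vir}, $e^{z\D}L(x)e^{-z\D}=L(x+z)$, which holds on $V_{\Vir}(c,0)$ by \cite[Lemma~3.2]{LM} because $\CL_{\ge -1}+\BF(\bc-c)$ is a $\B$-submodule of $\Vir$; equivalently, $\D^{(k)}$ acts on the modes $L_n$ exactly as $\partial_x^{(k)}$ acts on the coefficients of $L(x)$, which is the required translation covariance. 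Feeding these two inputs into \cite[Theorem~2.14]{LM} produces a vertex algebra structure $(V_{\Vir}(c,0),Y,\1)$ with $Y(\omega,x)=L(x)$.

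Uniqueness is then standard. Since $L_{-n}=\omega_{1-n}$, the spanning set above consists of iterated modes of $\omega$ applied to $\1$, so $V_{\Vir}(c,0)$ is generated by $\omega$ as a vertex algebra; the vertex algebra axioms --- in particular the iterate formula expressing $Y(a_mb,x)$ in terms of $Y(a,x)$, $Y(b,x)$ and the operators $\D^{(k)}$ --- then determine $Y(u,x)$ for every $u$ once $\1$ and $Y(\omega,x)$ are prescribed. I do not expect a genuine obstacle here: the entire content of the proposition is absorbed into \cite[Theorem~2.14]{LM}, and the only steps that deserve to be spelled out are the PBW spanning property and the observation that the locality order $4$ and the Virasoro relations themselves remain valid over any field of characteristic $\ne 2$, with $\binom{m+1}{3}$ read as an element of $\BF$ via its integral value.
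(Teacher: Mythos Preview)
Your proposal is correct and follows exactly the same approach as the paper: the paper's proof is a single sentence citing \eqref{eq:local-Vir}, \eqref{eq:conj-Vir}, and \cite[Theorem~2.14]{LM}, and you have simply spelled out the verifications (PBW spanning, creation property, locality, $\B$-covariance, and uniqueness via generation by $\omega$) that underlie that citation. The only minor remark is that the spanning set can be taken with $n_r\ge 2$ since $L_{-1}\1=0$, but your statement with $n_r\ge 1$ is of course still a valid spanning set.
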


Just as in the case of characteristic zero, we have (cf.~\cite{LL}, \cite{DR2}):

\begin{proposition}
Every module for vertex algebra $V_{\Vir}(c,0)$ is naturally a locally truncated module
for the Virasoro algebra $\Vir$ of central charge $c$ with $L(x)=Y_W(\omega,x)$.
On the other hand, on every locally truncated module $W$ for $\Vir$  of central charge $c$,
there is a module structure $Y_{W}(\cdot,x)$ for vertex algebra $V_{\Vir}(c,0)$,
which is uniquely determined by the condition that
\begin{equation*}
 Y_W(\omega,x)=L(x)=\sum_{n\in \BZ}L(n)x^{-n-2}.
\end{equation*}
\end{proposition}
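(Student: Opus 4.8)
The plan is to establish the two directions separately, exactly as in the classical case (see \cite{FZ}, \cite{LL}), with the prime-characteristic adjustments already supplied by the preceding material. First, suppose $W$ is a module for the vertex algebra $V_{\Vir}(c,0)$. Writing $Y_W(\omega,x)=\sum_{n\in\BZ}L(n)x^{-n-2}$, I would recover the Virasoro relations from the iterate formula: apply the Jacobi identity (Definition \ref{va-module}) with $u=v=\omega$, take $\Res_{x_0}\Res_{x_1}$ against appropriate powers, and use the structure of $Y(\omega,x_0)\omega$ in $V_{\Vir}(c,0)$. Here $\omega_0\omega=\D^{(1)}\omega=L(-1)\omega$, $\omega_1\omega=L(0)\omega=2\omega$, $\omega_2\omega=0$, and $\omega_3\omega=\frac{1}{2}c\1$ (these are read off from $L(x)$ acting on $V_{\Vir}(c,0)$), all of which are exactly the coefficients encoding \eqref{Virasoro-relations}; the binomial $\frac12\binom{m+1}{3}$ is a well-defined element of $\BF$ since $p>2$. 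This gives a $\Vir$-representation on $W$ of central charge $c$. Local truncation, i.e. $L(n)w=0$ for $n$ large, is immediate from the axiom $Y_W(\omega,x)w\in W((x))$.

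For the converse, let $W$ be a locally truncated $\Vir$-module of central charge $c$. The idea is to invoke the prime-characteristic existence theorem of \cite{LM} (the analogue of \cite[Theorem~2.14]{LM} used just above to build $V_{\Vir}(c,0)$ itself), applied now to a module rather than an algebra. Concretely, the local truncation hypothesis gives $L(x)w\in W((x))$ for each $w\in W$; the locality \eqref{eq:local-Vir}, $(x_1-x_2)^4[L(x_1),L(x_2)]=0$, continues to hold on $W$ because it is a purely formal consequence of \eqref{Virasoro-relations}; and the conjugation/derivative compatibility needed to match $Y_W(\D^{(n)}\omega,x)=\partial_x^{(n)}Y_W(\omega,x)$ (Lemma \ref{D-module}) is forced once one declares $Y_W(\omega,x)=L(x)$. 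The cited theorem from \cite{LM} then produces a unique $V_{\Vir}(c,0)$-module structure $Y_W(\cdot,x)$ on $W$ with $Y_W(\omega,x)=L(x)$, since $\omega$ generates $V_{\Vir}(c,0)$ as a vertex algebra.

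The only genuinely delicate point is the uniqueness claim together with verifying that the data $(L(x),\text{locality},\text{truncation})$ meet the precise hypotheses of the \cite{LM} existence theorem — in particular that one does not need any further relation beyond $(x_1-x_2)^4[L(x_1),L(x_2)]=0$ and the $\B$-module compatibility. Uniqueness follows because any two module maps $Y_W,\,Y_W'$ agreeing on the generator $\omega$ must agree on all of $V_{\Vir}(c,0)$: iterated vertex operators on $\omega$ span $V_{\Vir}(c,0)$, and associativity of module vertex operators (a consequence of the module Jacobi identity) propagates the agreement. I expect this bookkeeping — matching the hypotheses of the modular existence theorem and confirming the generation statement — to be the main obstacle, but it is entirely parallel to the construction of $V_{\Vir}(c,0)$ carried out in the preceding proposition, so no new phenomena arise in characteristic $p$.
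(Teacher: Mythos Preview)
Your proposal is correct and follows exactly the approach the paper intends: the paper gives no proof here, simply citing \cite{LL} and \cite{DR2} with the remark ``Just as in the case of characteristic zero,'' and your sketch is precisely the standard argument in those references, adapted via the modular existence theorem of \cite{LM} already used for the preceding proposition.
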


Note that the underlying space of vertex algebra $V_{\Vir}(c,0)$  is isomorphic to the $\Vir$-module
\begin{equation*}
    V_{\Vir}(c,0)=U(\Vir)/J_{c},
\end{equation*}
where $J_{c}=\sum_{n\ge -1}U(\Vir)L_{n}+U(\Vir)(\bc-c)$.
Recall that $\Vir$ is a $\BZ$-graded Lie algebra and $U(\Vir)$ is a $\BZ$-graded associative algebra with
\begin{equation*}
    \deg \bc =0,\     \    \     \    \deg L_{n}=-n\   \   \   \mbox{ for }n\in \BZ.
\end{equation*}
It is clear that $J_{c}$ is graded, so that $V_{\Vir}(c,0)$ is a $\BZ$-graded  $\Vir$-module where
\begin{equation}\label{grading-Virvoa}
    V_{\Vir}(c,0)_{(n)}=\spanf\{L(m_1)\ldots L(m_r)\1\mid m_i\in \BZ,\  m_1+\cdots+m_r=-n \}
\end{equation}
for $n\in \BZ$. In view of the P-B-W theorem, in fact we have
$V_{\Vir}(c,0)_{(n)}=0$ for $n<0$, $V_{\Vir}(c,0)_{(0)}=\BF \1$, $V_{\Vir}(c,0)_{(1)}=0$, and
\begin{equation*}
    V_{\Vir}(c,0)_{(n)}=\spanf\{L(-n_1)\ldots L(-n_r)\1\mid n_1\ge \cdots \ge n_r\ge 2,\  n_1+\cdots+n_r=n \}
\end{equation*}
for $n\ge 2$. Set
\begin{equation}
\omega=L(-2)\1\in V_{\Vir}(c,0)_{(2)}.
\end{equation}
As in the case of characteristic zero, we have (cf.~\cite{DR2}):

\begin{proposition}
For any $c\in \BF$, the vertex algebra $V_{\Vir}(c,0)$ equipped with  the $\BZ$-grading given in (\ref{grading-Virvoa})
and with $\omega=L(-2)\1$ as its conformal vector is a vertex operator algebra.
\end{proposition}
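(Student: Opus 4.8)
The plan is to check, one at a time, the conditions in Definition~\ref{def-voa} that are not yet in place. The vertex algebra structure on $V_{\Vir}(c,0)$ with $Y(\omega,x)=L(x)$ is already given, and from it the requirement that the components of $Y(\omega,x)=\sum_{n}L(n)x^{-n-2}$ represent $\Vir$ of central charge $c$ is automatic, since by construction the coefficients of $L(x)$ act on $V_{\Vir}(c,0)$ as the Virasoro generators with $\bc$ acting by $c$. The finiteness conditions are also immediate from the P-B-W description recorded above: $V_{\Vir}(c,0)_{(n)}=0$ for $n<0$, and for $n\ge 2$ the vectors $L(-n_1)\cdots L(-n_r)\1$ with $n_1\ge\cdots\ge n_r\ge 2$, $\sum n_i=n$, form a (finite) basis of $V_{\Vir}(c,0)_{(n)}$, while $V_{\Vir}(c,0)_{(0)}=\BF\1$ and $V_{\Vir}(c,0)_{(1)}=0$. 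So the real content is: (a) the grading \eqref{grading-Virvoa} makes $V_{\Vir}(c,0)$ a $\BZ$-graded vertex algebra in the sense of Definition~\ref{dgradedva}; (b) $L(0)$ acts as the scalar $n$ on $V_{\Vir}(c,0)_{(n)}$; and (c) $L(-1)=\D^{(1)}$ on $V_{\Vir}(c,0)$.

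I would establish (c) first, since (a) and (b) then follow cheaply. The idea is that $\omega_0=L(-1)$ and $\D^{(1)}$ coincide on $V_{\Vir}(c,0)$. Both are derivations in the sense that $[\omega_0,Y(v,x)]=Y(\omega_0 v,x)$ (Borcherds' commutator formula, as $\binom{0}{i}=\delta_{i,0}$) and $[\D^{(1)},Y(v,x)]=Y(\D^{(1)}v,x)$ (the coefficient of $z^{1}$ in \eqref{conjugation}); both annihilate $\1$, since $L_{-1}\in\CL_{\ge-1}$ kills $\1$ and $\D^{(1)}\1=\1_{-2}\1=0$; and both send $\omega=L(-2)\1$ to $L(-3)\1$ --- for $\omega_0$ because $L_{-1}L_{-2}\1=L_{-2}L_{-1}\1+[L_{-1},L_{-2}]\1=L_{-3}\1$ by \eqref{Virasoro-relations}, and for $\D^{(1)}$ because $\D^{(1)}(L_{-2}\1)=(\D^{(1)}L_{-2})\1=L_{-3}\1$ by the $\B$-module Lie algebra structure on $\Vir$ established earlier in this section. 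Since $V_{\Vir}(c,0)=U(\Vir)\1=\spanf\{\omega_{k_1}\cdots\omega_{k_r}\1\}$ is generated by $\omega$, the operator $E:=\omega_0-\D^{(1)}$ is a derivation killing both $\1$ and $\omega$, hence $[E,Y(\omega,x)]=Y(E\omega,x)=0$, so $E$ commutes with every mode $\omega_k$ and therefore vanishes on all of $V_{\Vir}(c,0)$. Thus $L(-1)=\D^{(1)}$, and then $Y(L(-1)v,x)=\partial_x^{(1)}Y(v,x)=\tfrac{d}{dx}Y(v,x)$ by Lemma~\ref{D-module} (note $\partial_x^{(1)}=d/dx$).

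For (b), I would move $L_0$ to the right through a monomial $L(-n_1)\cdots L(-n_r)\1$ using $[L_0,L_{-n}]=nL_{-n}$ and $L_0\1=0$, getting eigenvalue $n_1+\cdots+n_r$; this shows $L(0)$ acts as $n$ on $V_{\Vir}(c,0)_{(n)}$ and that \eqref{grading-Virvoa} is precisely the $L(0)$-eigenspace decomposition, so in particular $V_{\Vir}(c,0)=\bigoplus_n V_{\Vir}(c,0)_{(n)}$. For (a), $\1\in V_{\Vir}(c,0)_{(0)}$ is clear, and for homogeneous $u$ of degree $m$ one extracts from $Y(\D^{(1)}u,x)=\tfrac{d}{dx}Y(u,x)$ the identity $(L(-1)u)_k=-k\,u_{k-1}$, whence the commutator formula gives
\[
[L(0),u_k]=(L(-1)u)_{k+1}+(L(0)u)_k=-(k+1)u_k+m\,u_k=(m-k-1)u_k .
\]
Therefore $u_k V_{\Vir}(c,0)_{(n)}\subseteq V_{\Vir}(c,0)_{(m+n-k-1)}$, which is exactly the compatibility demanded by Definition~\ref{dgradedva}. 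Together with $\omega\in V_{\Vir}(c,0)_{(2)}$, all the axioms of Definition~\ref{def-voa} are verified.

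The step I expect to be the crux is (c), the identification $L(-1)=\D^{(1)}$: it is the only place where one genuinely uses the concrete structure of $V_{\Vir}(c,0)$ (generation by $\omega$, the bracket $[L_{-1},L_{-2}]=L_{-3}$, and the $\B$-action on $\Vir$), whereas everything else either is already recorded in the text or cascades from (c). The only additional point to be careful about is that the binomial-coefficient manipulations, and the equality $\partial_x^{(1)}=d/dx$, are to be read in the field $\BF$ of characteristic $p$ --- which is precisely what the Appendix lemma is for.
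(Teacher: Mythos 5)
Your overall plan matches what the paper leaves implicit (it simply cites the characteristic-zero argument and \cite{DR2}), and most of it goes through: the identification $L(-1)=\D^{(1)}$ via the derivation $E=\omega_0-\D^{(1)}$ killing $\1$ and $\omega$ is correct (you can even shortcut the $\B$-module computation by noting $\D^{(1)}\omega=\omega_{-2}\1=L(-3)\1$ directly, since $\omega_m=L(m-1)$), the $L(0)$-eigenvalue computation in (b) is fine, and the finiteness conditions do follow from the P-B-W description.

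The gap is in step (a), and it is precisely the step where characteristic $p$ bites. From $[L(0),u_k]=(m-k-1)u_k$ you conclude $u_k\,V_{\Vir}(c,0)_{(n)}\subseteq V_{\Vir}(c,0)_{(m+n-k-1)}$; but the grading \eqref{grading-Virvoa} is by $\BZ$ while $L(0)$-eigenvalues live in $\BF$, so $V_{(n)}$ and $V_{(n+p)}$ have the \emph{same} $L(0)$-eigenvalue $n\bmod p$. (For the same reason, your parenthetical claim that \eqref{grading-Virvoa} is ``precisely the $L(0)$-eigenspace decomposition'' is false here; directness of the sum must come from P-B-W, not from $L(0)$.) The eigenvalue argument therefore only yields $u_k V_{(n)}\subseteq\bigoplus_{j\equiv m+n-k-1\ (\mathrm{mod}\ p)}V_{(j)}$, which is strictly weaker than the axiom of Definition~\ref{dgradedva}. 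The standard repair is to prove (a) directly, without $L(0)$: observe from the definition of the grading that $\omega_k=L(k-1)$ maps $V_{(n)}$ into $V_{(n-k+1)}=V_{(2+n-k-1)}$, and then induct on the length of a spanning monomial $u=\omega_{-k_1}\cdots\omega_{-k_r}\1$ using the iterate formula $(\omega_{-j}v)_k=\sum_{i\ge0}(-1)^i\binom{-j}{i}\bigl(\omega_{-j-i}v_{k+i}-(-1)^{-j}v_{-j+k-i}\omega_i\bigr)$ to show that every mode of every homogeneous vector shifts degrees correctly; this is the same induction the paper invokes for the affine case after \eqref{Zgrading-affine}. With (a) established this way, the rest of your argument stands.
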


Next, we study quotient vertex algebras of the Virasoro vertex operator algebra
$V_{\Vir}(c,0)$.
First, we recall the definition of a restricted Lie algebra (see \cite{Jac}).

\begin{definition}
A {\em restricted Lie algebra} over $\BF$ (a field of characteristic $p$)  is a Lie algebra $\fg$ equipped with a mapping
$a\mapsto a^{[p]}$, called the {\em $p$-mapping},  satisfying the following conditions:
\begin{enumerate}
\item[(i)] $\ad a^{[p]}=(\ad a)^p$ for $a\in \fg$.
\item[(ii)] $(\lambda a)^{[p]}=\lambda^p a^{[p]}$ for  $\lambda\in\BF,\ a\in \fg$.
\item[(iii)] $(a+b)^{[p]}=a^{[p]}+b^{[p]}+\sum_{i=1}^{p-1}s_i(a,b)$  for $a,b\in \fg$,
\end{enumerate}
where $s_i(a,b)$ are give by
\begin{equation*}
    (\ad (a\otimes X+b\otimes 1))^{p-1}(a\otimes 1)=\sum_{i=1}^{p-1} is_i(a,b)\otimes X^{i-1}
\end{equation*}
in the Lie algebra $\fg\otimes \BF[X]$ over the polynomial ring $\BF[X]$.
\end{definition}

The following existence result can be found in \cite{Jac2} (cf. \cite[Theorem~2.3]{SF98}):

\begin{theorem}\label{p-mapping-sf}
Let $\fg$ be a Lie algebra with a basis $\{a_i\}_{i\in I}$.
Assume that  $(\ad a_i)^p=\ad b_i$ for $i\in I$, where $b_i\in \fg$.
Then there exists a restricted Lie algebra structure on $\fg$ with the $p$-mapping uniquely determined
by  $a_i^{[p]}=b_i$ for $i\in I$.
\end{theorem}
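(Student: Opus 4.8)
The plan is to construct the $p$-mapping inside the universal enveloping algebra $U(\fg)$, where an honest associative $p$-th power is available, and to read off the defining axioms from identities valid in every associative $\BF$-algebra. Two such identities are needed, both due to Jacobson (cf.~\cite{Jac}): first, inside $U(\fg)$ with the commutator bracket, $[x^p,z]=(\ad x)^p(z)$ for all $x,z\in\fg$; second, Jacobson's formula $(\lambda a)^p=\lambda^p a^p$ together with $(a+b)^p=a^p+b^p+\sum_{i=1}^{p-1}s_i(a,b)$, where $i\,s_i(a,b)$ is the coefficient of $X^{i-1}$ in $(\ad(aX+b))^{p-1}(a)$ computed in $U(\fg)[X]$. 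Since the latter computation uses only the Lie bracket, the $s_i$ occurring here are precisely the Lie polynomials $s_i$ of axiom (iii); iterating the two-variable case gives $\bigl(\sum_j a_j\bigr)^p=\sum_j a_j^p+w$ with $w$ a sum of iterated commutators of the $a_j$, hence $w\in\fg$ when each $a_j$ does.

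For the construction, the first identity applied to a basis vector gives $[a_i^p,z]=(\ad a_i)^p(z)=[b_i,z]$ for all $z\in\fg$, so $a_i^p-b_i$ lies in the center $Z(U(\fg))$. For $x=\sum_i\lambda_i a_i$ (a finite sum) I would expand in $U(\fg)$ as $x^p=\sum_i\lambda_i^p a_i^p+w(x)$ with $w(x)\in\fg$, and then write $\sum_i\lambda_i^p a_i^p=\sum_i\lambda_i^p b_i+c(x)$ with $c(x):=\sum_i\lambda_i^p(a_i^p-b_i)\in Z(U(\fg))$. I then define $x^{[p]}:=\sum_i\lambda_i^p b_i+w(x)\in\fg$, so that $x^p=x^{[p]}+c(x)$ with $c(x)$ central and $c(a_i)=0$. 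The crucial observation is that $c$ is additive and $p$-semilinear in $x$: since $(\lambda+\mu)^p=\lambda^p+\mu^p$ in $\BF$, the formula for $c$ gives $c(x+y)=c(x)+c(y)$ and $c(\lambda x)=\lambda^p c(x)$ outright.

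The axioms then follow from the decomposition $x^p=x^{[p]}+c(x)$ together with the associative identities. Axiom (i): $[x^{[p]},z]=[x^p-c(x),z]=[x^p,z]=(\ad x)^p(z)$ for $z\in\fg$, since $c(x)$ is central. Axiom (ii): comparing $(\lambda x)^p=\lambda^p x^p$ with the decomposition gives $(\lambda x)^{[p]}+c(\lambda x)=\lambda^p x^{[p]}+\lambda^p c(x)$, and cancelling $c(\lambda x)=\lambda^p c(x)$ yields $(\lambda x)^{[p]}=\lambda^p x^{[p]}$. Axiom (iii): comparing $(x+z)^p=x^p+z^p+\sum_{i=1}^{p-1}s_i(x,z)$ with the decomposition gives $(x+z)^{[p]}+c(x+z)=x^{[p]}+z^{[p]}+\sum_{i=1}^{p-1}s_i(x,z)+c(x)+c(z)$, and cancelling $c(x+z)=c(x)+c(z)$ yields axiom (iii) with exactly the required $s_i$. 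Uniqueness is then immediate: two $p$-mappings with the same values on the $a_i$ differ by a map $\delta\colon\fg\to\fg$ that is additive (the $s_i$-terms cancel in (iii)), satisfies $\delta(\lambda x)=\lambda^p\delta(x)$, and kills each $a_i$, hence is identically $0$.

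The step I expect to be the main obstacle — and the reason one cannot argue with $\ad$ alone — is that $\ad\colon\fg\to\Der(\fg)$ may fail to be injective, so the condition $\ad(x^{[p]})=(\ad x)^p$ pins down $x^{[p]}$ only modulo $Z(\fg)$, and a priori the resulting central ambiguity could destroy $p$-semilinearity and the $s_i$-identity. Passing to $U(\fg)$ makes this ambiguity visible as the explicit element $c(x)$, after which the whole argument reduces to the elementary-looking but essential fact that $c$ is itself additive and $p$-semilinear, so that it cancels out of precisely the combinations appearing in axioms (ii) and (iii). The one genuinely technical ingredient is the correct identification of the universal Lie polynomials $s_i$ in Jacobson's formula; I would cite it from \cite{Jac} rather than rederive the underlying combinatorial identities.
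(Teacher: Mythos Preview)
Your argument is correct and is essentially the classical proof due to Jacobson. Note, however, that the paper does not supply its own proof of this theorem: it is quoted as a known result from \cite{Jac2} (see also \cite[Theorem~2.3]{SF98}), so there is no ``paper's proof'' to compare against. What you have written is precisely the argument one finds in those references: pass to $U(\fg)$, use the identity $[x^p,z]=(\ad x)^p(z)$ to see that $a_i^p-b_i$ is central, decompose $x^p=x^{[p]}+c(x)$ with $c$ a $p$-semilinear central-valued function of the basis coordinates, and observe that the additivity and $p$-semilinearity of $c$ make it drop out of exactly the combinations appearing in axioms (ii) and (iii). The only point that deserves a word more of care is your assertion that iterating the two-variable Jacobson formula gives $\bigl(\sum_j \lambda_j a_j\bigr)^p=\sum_j \lambda_j^p a_j^p+w$ with $w\in\fg$; this is true because each $s_i(a,b)$ is a Lie polynomial, but the induction does use that the partial sums $\sum_{j\le k}\lambda_j a_j$ lie in $\fg$, so that at every stage the two arguments of $s_i$ are elements of $\fg$.
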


For any restricted Lie algebra $\fg$,  a basic fact is that
$a^{p}-a^{[p]}$  for $a\in \fg$ lie in the center of the universal enveloping algebra $U(\fg)$.
The {\em restricted universal enveloping algebra} $\mathfrak{u}(\fg)$ of a restricted Lie algebra $\fg$
by definition is the quotient algebra of $U(\fg)$ by the ideal generated by (central) elements $a^p-a^{[p]}$ for $a\in\fg$.

Using Theorem \ref{p-mapping-sf}, we have the following result (which might be known somewhere):

\begin{lemma}\label{th:restrictness}
There is a restricted Lie algebra structure on the Virasoro algebra $\Vir$, whose $p$-mapping is uniquely determined by
\begin{equation}\label{pmapping-Vir}
    \bc^{[p]}=\bc\  \   \mbox{ and }\  \    L_m^{[p]} = \begin{cases}
                        L_{p m} & p\mid m,\\
                        0 & p\nmid m
                    \end{cases}
\end{equation}
for $m\in\BZ$.
\end{lemma}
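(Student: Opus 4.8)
The plan is to invoke Theorem~\ref{p-mapping-sf} with the basis $\{L_n\mid n\in\BZ\}\cup\{\bc\}$ of $\Vir$, so that the entire problem reduces to verifying that $(\ad L_m)^p$ and $(\ad \bc)^p$ are again inner derivations given by the elements displayed in \eqref{pmapping-Vir}. Since $\bc$ is central, $\ad\bc=0$ and $(\ad\bc)^p=0=\ad\bc$ trivially (consistent with any choice, and in particular with $\bc^{[p]}=\bc$, since $\ad$ kills the center; the specific value $\bc^{[p]}=\bc$ is then a legitimate normalization). The real content is the claim
\begin{equation*}
(\ad L_m)^p=\begin{cases}\ad L_{pm}&\text{if }p\mid m,\\[2pt]0&\text{if }p\nmid m,\end{cases}
\end{equation*}
as operators on $\Vir$. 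I would first record the formula $(\ad L_m)(L_n)=(m-n)L_{m+n}+\frac12\binom{m+1}{3}\delta_{m+n,0}\bc$ and $(\ad L_m)(\bc)=0$, and note that since the image of $\ad L_m$ lands in $\bigoplus_n\BF L_n\oplus\BF\bc$ and $\bc$ is annihilated, after one application of $\ad L_m$ the central term can only survive at the very last step; but $p\ge 3$, so for $p\ge 2$ applications the cocycle contributes nothing to $(\ad L_m)^p$ evaluated on any $L_n$ unless intermediate terms return to $L_{-m}$—I will handle this bookkeeping by computing on the "generic" weight spaces first and then separately checking the finitely many special indices.

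The cleanest route is the explicit iteration on basis vectors: for $k\ge 1$,
\begin{equation*}
(\ad L_m)^k(L_n)=\Bigl(\prod_{j=0}^{k-1}(m-n-jm)\Bigr)L_{n+km}+(\text{central correction}),
\end{equation*}
where $\prod_{j=0}^{k-1}(m-n-jm)=\prod_{j=0}^{k-1}\bigl((1-j)m-n\bigr)$. Setting $k=p$, the leading coefficient is $\prod_{j=0}^{p-1}\bigl((1-j)m-n\bigr)$. Reindexing $i=1-j$ so that $i$ ranges over $1,0,-1,\dots,2-p$, i.e. over a complete set of residues modulo $p$ (since $p$ consecutive integers hit every residue class), this product equals $\prod_{i\bmod p}(im-n)$. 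If $p\nmid m$, then $im-n$ runs over all residues mod $p$ as $i$ does, so one of the factors is $\equiv 0\pmod p$, forcing the whole product to vanish in $\BF$; hence $(\ad L_m)^p(L_n)=0$ for the non-central part. If $p\mid m$, then every factor $im-n\equiv -n\pmod p$, so the product is $\equiv (-n)^p\equiv -n\pmod p$ by Fermat, giving $(\ad L_m)^p(L_n)=-n\,L_{n+pm}+(\text{central})$; and since $p\mid m$ implies $pm\equiv 0$, one checks $\ad L_{pm}(L_n)=(pm-n)L_{n+pm}+\tfrac12\binom{pm+1}{3}\delta_{n+pm,0}\bc=-n\,L_{n+pm}+(\text{central})$, matching. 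The central corrections in both cases are supported only when some partial sum $n+km$ returns to $0$, i.e. on the finitely many $n$ with $n\in\{0,-m,-2m,\dots,-(p-1)m\}$; I would treat these directly, noting the cocycle value $\frac12\binom{\ell+1}{3}$ appears with a product of linear factors as coefficient, and verifying it agrees with the cocycle term in $\ad L_{pm}$ (for $p\mid m$) or vanishes in $\BF$ (for $p\nmid m$, where already the surviving monomial coefficient is zero and one must separately check the scalar multiple of $\bc$ is $0$ in $\BF$—this follows because the coefficient is again a product over a complete residue system, hence divisible by $p$).

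The main obstacle I anticipate is precisely the careful treatment of the central term in the iteration, because $\bc$ is absorbed by $\ad L_m$, so $(\ad L_m)^p(L_n)$ only picks up a multiple of $\bc$ when the orbit $n,n+m,\dots,n+(p-1)m$ passes through $0$ at some intermediate stage, and one must sum the contributions $\frac12\binom{(n+km)+1}{3}$ weighted by the appropriate partial products of structure constants. Once the generic computation above is in place, these are finitely many explicit identities in $\BF$; the one subtlety worth care is confirming that in the $p\nmid m$ case the scalar coefficient of $\bc$ is also zero modulo $p$ (not just the $L$-coefficient), which I would obtain by the same complete-residue-system argument applied to the relevant partial product. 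With all cases checked, Theorem~\ref{p-mapping-sf} yields a unique restricted structure with the stated $p$-mapping, completing the proof. As a sanity check, one may also verify directly that \eqref{pmapping-Vir} is consistent with conditions (ii) and (iii) of the definition on the $\BF$-span of each $L_m$ and $\bc$, but this is subsumed by the theorem.
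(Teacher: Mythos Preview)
Your approach is essentially the same as the paper's: invoke Theorem~\ref{p-mapping-sf}, compute $(\ad L_m)^p L_n$ by direct iteration, and analyze the product $\prod_{j=0}^{p-1}((1-j)m-n)$ via complete residue systems and Fermat. The $L$-part of your argument is fine and matches the paper.

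Where your sketch goes wrong is the bookkeeping for the central term. You correctly note early on that a $\bc$-contribution can only survive if it appears at the \emph{final} application of $\ad L_m$ (since $\ad L_m$ kills $\bc$), but you then contradict this by claiming the central correction is supported on all $n\in\{0,-m,\dots,-(p-1)m\}$. In fact only $n=-pm$ matters, and the coefficient of $\bc$ is the single expression $\tfrac12\binom{m+1}{3}\prod_{j=0}^{p-2}((1-j)m-n)$ at $n=-pm$; the paper observes that the $j=1$ factor equals $-n=pm\equiv 0$, so this product vanishes in~$\BF$ without any case-by-case checking.

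There is also a step you omit: in the case $p\mid m$ you must check that $\ad L_{pm}$ itself has no central term, i.e.\ that $\binom{pm+1}{3}\equiv 0\pmod p$. The paper verifies this separately for $p>3$ (where one of $pm,\,pm-1,\,pm+1$ being divisible by $p$ is immediate since $pm\equiv 0$) and for $p=3$ (where $p\mid m$ forces $pm\equiv 0\pmod 9$). Without this, your claimed match between $(\ad L_m)^p$ and $\ad L_{pm}$ is incomplete.
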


\begin{proof}
For $m,n\in\BZ$, we have
\begin{align*}
   &\phantom{=}\;\;(\ad L_m)^p L_n\\
    &=(\ad L_m)^{p-1} ((m-n)L_{m+n}+\frac{1}{2}\binom{m+1}{3}\delta_{m+n,0}\bc)\\
    &=(\ad L_m)^{p-2} ((m-n)(-n)L_{2m+n}+\frac{1}{2}\binom{m+1}{3}(m-n)\delta_{2m+n,0}\bc)\\
    &=\cdots\\
    &=\bigg(\prod_{i=0}^{p-1}(-n-(i-1)m)\bigg)L_{pm+n}
        +\frac{1}{2}\binom{m+1}{3}\delta_{pm+n,0}\bigg(\prod_{i=0}^{p-2}(-n-(i-1)m)\bigg)\bc.
        \end{align*}
Notice that we have $p\mid \prod_{i=0}^{p-2}(-n-(i-1)m)$  if $p\mid n$ (as $p\ge 3$)
and $\delta_{pm+n,0}=0$ if $p\nmid n$. Thus
\begin{equation}
 (\ad L_m)^p L_n=\bigg(\prod_{i=0}^{p-1}(-n-(i-1)m)\bigg)L_{pm+n}.
\end{equation}
Furthermore, if $p\nmid m$, we see $p\mid \prod_{i=0}^{p-1}(-n-(i-1)m)$. Then we obtain
\begin{equation}
    (\ad L_m)^p L_n=0\  \   \text{ if }p\nmid m.
\end{equation}

Assume $p\mid m$.
Noticing that $\prod_{i=0}^{p-1}(-n-(i-1)m)\equiv (-n)^p\equiv -n \pmod{p}$, we get
\begin{equation*}
    (\ad L_m)^p L_n=-n L_{pm+n}=(\ad L_{pm})L_n-\frac{1}{2}\binom{pm+1}{3}\delta_{pm+n,0}\bc.
\end{equation*}
If $p>3$, we see that $\binom{pm+1}{3}\equiv 0 \pmod{p}$.
If $p=3$, then $m=3k$ for some $k\in\BZ$, so that
$\binom{pm+1}{3}\equiv \binom{9k+1}{3}\equiv 0 \pmod{3}$.
Therefore, we have
\begin{equation}
    (\ad L_m)^p L_n=(\ad L_{pm})L_n \   \   \  \mbox{ if }p\mid m.
\end{equation}
It then follows from Theorem \ref{p-mapping-sf} that $\Vir$ is a restricted Lie algebra with
the $p$-mapping uniquely determined by (\ref{pmapping-Vir}).
\end{proof}

 For $m\in \BZ$, set
\begin{equation}
    \delta_{p|m}=\begin{cases}
      1& p\mid m,\\
      0& p\nmid m.
    \end{cases}
\end{equation}

As an immediate consequence of Lemma~\ref{th:restrictness}, we have:

\begin{corollary}
The elements $L_{n}^p-\delta_{p\mid n}L_{np}$  for $n\in\BZ$ lie
in the center of $U(\Vir)$.
\end{corollary}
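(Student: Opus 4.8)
The plan is to deduce the corollary directly from Lemma~\ref{th:restrictness} together with the standard fact, recalled just above the corollary in the excerpt, that for any restricted Lie algebra $\fg$ the elements $a^{p}-a^{[p]}$ with $a\in\fg$ are central in $U(\fg)$. Since Lemma~\ref{th:restrictness} establishes that $\Vir$ is a restricted Lie algebra with $p$-mapping given by \eqref{pmapping-Vir}, it suffices to specialize $a=L_{n}$ for each $n\in\BZ$ and rewrite $L_{n}^{[p]}$ using the explicit formula.

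Concretely, first I would invoke the general fact: for $a\in\fg$, $a^{p}-a^{[p]}$ lies in the center of $U(\fg)$; this is valid because $\ad(a^{p}-a^{[p]})=(\ad a)^{p}-\ad a^{[p]}=0$ by axiom (i) of a restricted Lie algebra, and an element of $\fg\subset U(\fg)$ killed by all of $\ad\fg$ is central in $U(\fg)$ (one checks it commutes with a PBW basis). Then I would take $\fg=\Vir$ with the restricted structure from Lemma~\ref{th:restrictness} and set $a=L_{n}$. By \eqref{pmapping-Vir}, $L_{n}^{[p]}=L_{np}$ when $p\mid n$ and $L_{n}^{[p]}=0$ when $p\nmid n$; both cases are uniformly $\delta_{p\mid n}L_{np}$ by the definition of $\delta_{p\mid n}$ introduced immediately before the corollary. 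Hence $L_{n}^{p}-\delta_{p\mid n}L_{np}=L_{n}^{p}-L_{n}^{[p]}$ is central in $U(\Vir)$ for every $n\in\BZ$.

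There is essentially no obstacle here: the corollary is a pure specialization of Lemma~\ref{th:restrictness}, and the only thing to be slightly careful about is matching notation, namely that the case distinction in \eqref{pmapping-Vir} is exactly captured by $\delta_{p\mid n}$, and that $\bc$ plays no role in the statement (though of course $\bc^{p}-\bc$ is central too, by the same reasoning applied to $a=\bc$). So the proof is one line: apply the general centrality fact for restricted Lie algebras to $\Vir$ and read off $L_{n}^{[p]}=\delta_{p\mid n}L_{np}$ from Lemma~\ref{th:restrictness}.
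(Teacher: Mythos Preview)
Your proposal is correct and matches the paper's approach exactly: the paper presents this corollary as ``an immediate consequence of Lemma~\ref{th:restrictness}'' with no further proof, relying on the previously recalled basic fact that $a^{p}-a^{[p]}$ is central in $U(\fg)$ for any restricted Lie algebra. One small expository slip: $a^{p}-a^{[p]}$ lies in $U(\fg)$, not in $\fg$, so the centrality argument should read that $[a^{p}-a^{[p]},b]=(\ad a)^{p}(b)-(\ad a^{[p]})(b)=0$ for all $b\in\fg$, whence the element commutes with the generators of $U(\fg)$.
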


Recall the following result  from  \cite{CMN}:

\begin{theorem}\label{th:CMN}
Let $\fg$ be a Lie algebra over $\BF$
and let $a_1,a_2,\ldots,a_p\in \fg$. Then
\begin{equation}
    \sum_{\sigma\in S_p}a_{\sigma(1)}a_{\sigma(2)}\ldots a_{\sigma(p)}=\sum_{\sigma\in S_p,\sigma(1)=1}[[\cdots[[a_{\sigma(1)},a_{\sigma(2)}],a_{\sigma(3)}],\ldots],a_{\sigma(p)}]
\end{equation}
in $U(\fg)$, where $S_p$ denotes the symmetric group.
\end{theorem}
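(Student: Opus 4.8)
The plan is to reduce to an explicit monomial count in a free associative algebra and then invoke the characteristic-$p$ congruence $\binom{p-1}{k}\equiv(-1)^{k}\pmod{p}$. Both sides of the asserted identity are multilinear in $a_{1},\dots,a_{p}$ and are assembled only out of the $a_{i}$, the associative product of $U(\fg)$, and Lie brackets; hence it suffices to prove the identity with $\fg$ replaced by the free Lie algebra on generators $x_{1},\dots,x_{p}$ and each $a_{i}$ by $x_{i}$, i.e.\ inside $T=\BF\langle x_{1},\dots,x_{p}\rangle$. Both sides then lie in the multilinear component $M\subseteq T$, which has as a basis the $p!$ monomials $x_{\tau(1)}\cdots x_{\tau(p)}$, $\tau\in S_{p}$, so it is enough to compare, for each such monomial, its coefficient on the two sides; on the left that coefficient is $1$ for every $\tau$.

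First I would record the standard expansion of a left-normed bracket: for elements $y_{1},\dots,y_{k}$ of an associative algebra,
\begin{equation*}
[[\cdots[y_{1},y_{2}],\dots],y_{k}]=\sum_{S\subseteq\{2,\dots,k\}}(-1)^{|S|}\;y_{s_{m}}\cdots y_{s_{1}}\,y_{1}\,y_{t_{1}}\cdots y_{t_{k-1-m}},
\end{equation*}
where $S=\{s_{1}<\cdots<s_{m}\}$ and $\{t_{1}<\cdots<t_{k-1-m}\}=\{2,\dots,k\}\setminus S$; that is, the factors with index in $S$ appear to the left of $y_{1}$ in decreasing order of index and those not in $S$ appear to the right in increasing order. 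This follows by a one-line induction on $k$: applying $[\,\cdot\,,y_{k}]$ to the case $k-1$ splits the resulting sum according to whether $k$ is placed into $S$ (it then becomes the new largest element, absorbed at the far left) or not (absorbed at the far right), with the signs matching.

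Now take the right-hand side of the theorem. The condition $\sigma(1)=1$ means exactly that $y_{1}=x_{1}$ while $(y_{2},\dots,y_{p})=(x_{\sigma(2)},\dots,x_{\sigma(p)})$ runs over all orderings of $x_{2},\dots,x_{p}$. Fix a target monomial $w=x_{c_{1}}\cdots x_{c_{p}}$ and let $r$ be the position of $x_{1}$ in it, i.e.\ $c_{r}=1$. In the expansion above $y_{1}=x_{1}$ occupies position $|S|+1$, so only subsets $S$ of size $r-1$ can contribute to $w$. For each of the $\binom{p-1}{r-1}$ subsets $S=\{s_{1}<\cdots<s_{r-1}\}\subseteq\{2,\dots,p\}$ there is exactly one ordering $(y_{2},\dots,y_{p})$ producing $w$: matching the left block of $w$ forces $y_{s_{r-1}}=x_{c_{1}},\dots,y_{s_{1}}=x_{c_{r-1}}$ and matching the right block forces $y_{t_{1}}=x_{c_{r+1}},\dots$, which determines $y_{j}$ for every $j\in\{2,\dots,p\}$; since $c$ is a permutation taking the value $1$ at spot $r$, this assignment is automatically a bijection onto $\{x_{2},\dots,x_{p}\}$. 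Each such contribution carries sign $(-1)^{r-1}$, so the coefficient of $w$ in the right-hand side is $\binom{p-1}{r-1}(-1)^{r-1}$ in $\BF$. Since $\binom{p-1}{r-1}=\tfrac{(p-1)(p-2)\cdots(p-r+1)}{(r-1)!}\equiv(-1)^{r-1}\pmod{p}$, this coefficient equals $1$; as $w$ was arbitrary, the right-hand side equals $\sum_{\tau\in S_{p}}x_{\tau(1)}\cdots x_{\tau(p)}$ in $M$, and the theorem follows.

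I expect the third step to be the only delicate part: verifying that every multilinear monomial with $x_{1}$ in a prescribed slot is hit with total multiplicity exactly $\binom{p-1}{r-1}$ and with one uniform sign. Once the left-normed expansion is available this is a finite bijection argument, and the collapse of $\binom{p-1}{r-1}(-1)^{r-1}$ to $1$ in characteristic $p$ is then immediate. (An alternative packaging identifies $M$ with the group algebra $\BF[S_{p}]$, under which the left-hand side becomes the group norm and the operator $x_{i_{1}}\cdots x_{i_{p}}\mapsto[[\cdots[x_{i_{1}},x_{i_{2}}],\dots],x_{i_{p}}]$ becomes right multiplication by a fixed element, so the identity reads as a relation between the norm of $S_{p}$ and the norm of the point stabiliser $S_{p-1}$; the direct monomial count still seems the cleanest route.)
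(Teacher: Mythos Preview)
Your proof is correct. The paper does not actually prove this statement: it is quoted verbatim as a known result from \cite{CMN} (Cohen--Moore--Neisendorfer) and used as a black box to derive the slight generalisation in Lemma~\ref{th:CMN-r}. So there is no ``paper's own proof'' to compare against; you have supplied a complete, self-contained argument where the paper simply invokes a reference.

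For what it is worth, your route is a clean direct verification: the reduction to the multilinear component of the free associative algebra is standard, the closed form you give for the left-normed commutator expansion is correct (the inductive step you sketch works exactly as stated), and the counting step is accurate --- for a fixed target monomial with $x_{1}$ in slot $r$, each of the $\binom{p-1}{r-1}$ choices of $S$ determines a unique $\sigma$ with $\sigma(1)=1$ producing that monomial, each with sign $(-1)^{r-1}$, and the congruence $\binom{p-1}{r-1}\equiv(-1)^{r-1}\pmod p$ collapses the coefficient to $1$. The original source proves the identity by a different method (via the Dynkin--Specht--Wever idempotent and manipulations in the group algebra of $S_{p}$, closer in spirit to the ``alternative packaging'' you mention at the end), but your monomial count is arguably more transparent for this isolated statement.
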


For our need, we generalize this result slightly.
Let $1\le t\le p$ and let $r_1,\ldots,r_t$ be positive integers with $r_1+\cdots+r_t=p$.
Denote by $T(r_1,\ldots, r_t)$ the set of  maps
\begin{equation*}
    \tau:\  \{1,2,\dots,p\}\rightarrow \{1,2,\dots,t\}
\end{equation*}
such that  for each $1\le i\le t$, $\tau$ takes the value $i$ exactly $r_i$ times.

The following is a generalization of Theorem \ref{th:CMN}:

\begin{lemma}\label{th:CMN-r}
Let $\fg$ be a Lie algebra and let $a_1,\dots,a_t\in \fg$ with $1\le t\le p$.
Assume that $r_1,\ldots,r_t$ are positive integers such that $r_1+\cdots+r_t=p$.
Then
\begin{equation}\label{eq:T}
    r_1\sum_{\tau\in T(r_1, \ldots,r_t)}a_{\tau(1)}a_{\tau(2)}\ldots a_{\tau(p)}
    =\sum_{\tau\in T(r_1, \ldots, r_t),\tau(1)=1}[[\cdots[[a_{\tau(1)},a_{\tau(2)}],a_{\tau(3)}],\ldots],a_{\tau(p)}].
\end{equation}
\end{lemma}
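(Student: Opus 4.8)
The plan is to reduce Lemma~\ref{th:CMN-r} to Theorem~\ref{th:CMN} by a "polarization" trick: introduce $p$ distinct formal (or generic) Lie algebra elements and then specialize. Concretely, I would work in the Lie algebra $\fg' = \fg \otimes \BF[y_1,\dots,y_t]$ (or simply pass to a larger Lie algebra containing $\fg$ together with $t$ independent copies of each relevant generator), and apply Theorem~\ref{th:CMN} to the $p$-tuple obtained by listing $a_1$ a total of $r_1$ times, $a_2$ a total of $r_2$ times, \dots, $a_t$ a total of $r_t$ times. The point is that the left side of Theorem~\ref{th:CMN}, $\sum_{\sigma\in S_p} a_{\sigma(1)}\cdots a_{\sigma(p)}$, when the $p$ entries are grouped into $t$ blocks of sizes $r_1,\dots,r_t$ of equal elements, collapses: each word in $T(r_1,\dots,r_t)$ is hit exactly $r_1!\cdots r_t!$ times. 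Likewise the right side, the sum over $\sigma$ with $\sigma(1)=1$ of the left-nested bracket, collapses so that each $\tau\in T(r_1,\dots,r_t)$ with $\tau(1)=1$ is counted $(r_1-1)!\,r_2!\cdots r_t!$ times (the $(r_1-1)!$ because one copy of $a_1$ is pinned in the first slot).

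The key steps, in order, are: (1) fix an injection of the multiset $\{a_1^{(r_1)},\dots,a_t^{(r_t)}\}$ into an ordered list $b_1,\dots,b_p$, with the convention that $b_1$ is one of the copies of $a_1$; (2) apply Theorem~\ref{th:CMN} verbatim to $b_1,\dots,b_p$; (3) on the left-hand side, partition $S_p$ into fibers according to the induced map $\tau:\{1,\dots,p\}\to\{1,\dots,t\}$ recording which block $\sigma(j)$ lands in, observing each fiber has size $\prod_i r_i!$ and contributes $r_1!\cdots r_t!$ copies of the monomial $a_{\tau(1)}\cdots a_{\tau(p)}$; (4) on the right-hand side do the same, but now $\sigma(1)=1$ forces $\tau(1)=1$, and the fiber over a given $\tau$ with $\tau(1)=1$ has size $(r_1-1)!\,r_2!\cdots r_t!$; (5) divide both sides of the resulting identity by $r_2!\cdots r_t!\cdot(r_1-1)!$ and note that $r_1!/(r_1-1)! = r_1$, which produces exactly \eqref{eq:T}. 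Finally, since $1\le t\le p$ and the $r_i$ are positive with $\sum r_i = p$, each $r_i\le p$, so $r_i!$ is a unit in $\BF$ (characteristic $p$); hence the division in step~(5) is legitimate over $\BF$.

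One bookkeeping subtlety deserves care: Theorem~\ref{th:CMN} is stated for $p$ \emph{elements} $a_1,\dots,a_p$ of $\fg$, and nothing there requires them distinct, so step~(2) applies directly with repeated entries---no auxiliary Lie algebra is actually needed, which streamlines the argument. The only genuine obstacle, and the place I expect to spend the most care, is the combinatorial counting in steps (3)--(4): making precise that summing $a_{\sigma(1)}\cdots a_{\sigma(p)}$ over all $\sigma\in S_p$ with the block structure fixed reproduces each word in $T(r_1,\dots,r_t)$ with the stated multiplicity, and similarly that the constraint $\sigma(1)=1$ in Theorem~\ref{th:CMN} interacts correctly with the block of $a_1$'s (it pins exactly one of the $r_1$ copies, hence the factor $(r_1-1)!$ rather than $r_1!$). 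Once the multiplicities are pinned down, \eqref{eq:T} drops out after cancelling the common factorial factors, and the characteristic-$p$ hypothesis guarantees those factors are invertible.
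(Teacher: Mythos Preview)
Your proposal is correct and follows essentially the same route as the paper: list the $a_i$ with multiplicity as $b_1,\dots,b_p$, apply Theorem~\ref{th:CMN} directly (no polarization needed, as you note), collect the multiplicities $r_1!\cdots r_t!$ on the left and $(r_1-1)!\,r_2!\cdots r_t!$ on the right, then cancel. One small wording slip: your justification ``each $r_i\le p$, so $r_i!$ is a unit'' fails when $t=1$ and $r_1=p$; what you actually need (and what the paper states) is that $r_1-1<p$ and $r_i<p$ for $i\ge2$, so the divisor $(r_1-1)!\,r_2!\cdots r_t!$ in step~(5) is indeed coprime to $p$.
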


\begin{proof} We define a sequence $b_1,b_2,\dots,b_p$ in $\fg$ by
\begin{align*}
    b_i&=a_1\quad\mbox{for }1\le i\le r_1,\\
    b_i&=a_2\quad\mbox{for }r_1+1\le i\le r_1+r_2,\\
    &\;\;\vdots\\
    b_i&=a_t\quad\mbox{for }p-r_t+1\le i\le p.
\end{align*}
By Lemma~\ref{th:CMN}, we have
\begin{equation}
    \sum_{\sigma\in S_p}b_{\sigma(1)}b_{\sigma(2)}\ldots b_{\sigma(p)}=\sum_{\sigma\in S_p,\sigma(1)=1}[[\cdots[[b_{\sigma(1)},b_{\sigma(2)}],b_{\sigma(3)}],\ldots],b_{\sigma(p)}],
\end{equation}
which gives
\begin{align*}
    &r_1!r_2!\cdots r_t!\sum_{\tau\in T(r_1, \ldots, r_t)}a_{\tau(1)}a_{\tau(2)}\ldots a_{\tau(p)}\\
    &=(r_1-1)!r_2!\cdots r_t!\sum_{\tau\in T(r_1, \ldots, r_t),\tau(1)=1}[[\cdots[[a_{\tau(1)},a_{\tau(2)}],a_{\tau(3)}],\ldots],a_{\tau(p)}].
\end{align*}
Since $r_1+\cdots+r_t=p$, we have $r_1\le p$ and $r_i<p$ for $2\le i\le t$, so that $p\nmid (r_1-1)!r_2!\cdots r_t!$.
Then \eqref{eq:T} follows immediately.
\end{proof}

The following is a key result in this section:

\begin{proposition}\label{Lnp-formula}
The following relation holds on $V_{\Vir}(c,0)$ for $n\ge 2$:
\begin{align*}
    Y((L_{-n}^p-\delta_{p\mid n}L_{-np})\1,x)&=\sum_{j\ge0}\binom{-n+1}{j}(-1)^j (L_{-n-j}^p-\delta_{p\mid(n+j)}L_{-(n+j)p})x^{jp}\\
    &\quad +\sum_{j\ge0}\binom{-n+1}{j}(-1)^{-n-j}(L_{j-1}^p-\delta_{p\mid(j-1)}L_{(j-1)p})x^{(-n+1-j)p}.
\end{align*}
\end{proposition}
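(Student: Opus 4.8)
The plan is to reduce the computation of $Y((L_{-n}^p-\delta_{p\mid n}L_{-np})\1,x)$ to Lemma~\ref{th:a(x)^p} (with $a=\omega$, so $a_m=L(m-1)$ and $a_{-n}\1=L(-n-1)\1$), together with the observation that $L_n^p-\delta_{p\mid n}L_{np}$ is central in $U(\Vir)$ by the Corollary following Lemma~\ref{th:restrictness}. First I would rewrite $(L_{-n})^p\1 = (a_{-n+1})^p\1$ and apply Lemma~\ref{th:a(x)^p}: this expresses $Y((a_{-n+1})^p\1,x)$ as a $p$-th power of a sum of positive modes plus a $p$-th power of a sum of nonpositive modes, namely
\begin{equation*}
    Y((a_{-n+1})^p\1,x)=\Bigl(\sum_{j\ge0}\binom{-n+1}{j}(-1)^j a_{-n+1-j}x^{j}\Bigr)^p
            +\Bigl(-\sum_{j\ge0}\binom{-n+1}{j}(-1)^{n-1+j}a_{n-1+j}x^{-n+1-j}\Bigr)^p,
\end{equation*}
where each $a_m = L(m-1)$. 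The subtlety compared to Corollary~\ref{th:a_n^p} is that $\omega$ does \emph{not} satisfy $\omega_i\omega\in\BF\1$ for all $i\in\BN$ (there is the $L(-1)$-descendant term $\omega_0\omega=L(-3)\1\notin\BF\1$), so the modes appearing inside each parenthesis do not commute, and expanding the $p$-th power is not immediate.

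The key point is that although the individual operators $L(m)$ inside a given parenthesis fail to commute, their \emph{full} $p$-th power collapses. I would expand $\bigl(\sum_j c_j a_{m_j}\bigr)^p$ (a finite linear combination in each coefficient of $x$) using the multinomial-type expansion, grouping terms by which multiset of modes is used; this is exactly the setting of Lemma~\ref{th:CMN-r}, which says that for each fixed multiset of indices the symmetrized sum equals a sum of iterated brackets, up to a nonzero scalar. Because the bracket $[L(i),L(j)]$ lands in $\Vir$ and the iterated brackets of length $\ge 2$ of Virasoro elements produce a single $L$ (plus central terms that vanish on the relevant sums), the net effect is that $\bigl(\sum_j c_j a_{m_j}\bigr)^p$ differs from $\sum_j c_j^p (a_{m_j})^p$ by a correction built from iterated brackets. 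One then recognizes, using Lemma~\ref{th:restrictness} and its Corollary (or directly the computation $(\ad L_m)^pL_n$ done in the proof of that lemma), that precisely this correction converts each $(a_{m_j})^p = (L(m_j-1))^p$ into $(L(m_j-1))^p - \delta_{p\mid(m_j-1)}L((m_j-1)p)$, i.e.\ into the central combination $L_{m_j-1}^p - \delta_{p\mid(m_j-1)}L_{(m_j-1)p}$. Meanwhile the Frobenius identity $(\sum_j c_j)^p = \sum_j c_j^p$ for scalars handles the $x$-powers, turning $x^{j}$ into $x^{jp}$ and $x^{-n+1-j}$ into $x^{(-n+1-j)p}$; since $\binom{-n+1}{j}\in\BF$ and $p$-th power is the Frobenius, $\binom{-n+1}{j}^p = \binom{-n+1}{j}$ only after reducing, but in fact one keeps track of $(-1)^{jp}=(-1)^j$ and simply carries $\binom{-n+1}{j}$ through each summand individually rather than combining.

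Reindexing the second sum (replacing $n-1+j$ by a new $j$, i.e.\ the positive modes $a_{n-1+j}=L(n-2+j)$ correspond to $L_{j-1}$ after matching $\deg$), and subtracting $\delta_{p\mid n}Y(L_{-np}\1,x) = \delta_{p\mid n}L((-np)-\text{shift})$ handled by Lemma~\ref{D-module} / the fact that $Y(L(-np)\1,x)=\sum_m L(m)x^{-m+np-2}$, I would match the two displayed sums with those in the statement. The main obstacle I anticipate is the bookkeeping in the preceding step: carefully showing that the non-commuting correction terms from expanding the $p$-th power assemble \emph{exactly} into $-\delta_{p\mid(m_j-1)}L_{(m_j-1)p}$ for each mode, and in particular that all the central $\bc$-contributions (which appear both in $[L(x_1),L(x_2)]$ and in the $(\ad L_m)^pL_n$ computation) cancel or vanish because $p\mid\binom{m+1}{3}$-type coefficients — this is where the hypothesis $p>2$ (and the separate check at $p=3$) is essential, and it mirrors the case analysis already carried out in the proof of Lemma~\ref{th:restrictness}. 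Once that identity of "correction $=$ restricted $p$-operation" is in hand for a single mode, Lemma~\ref{th:CMN-r} upgrades it to the full symmetrized $p$-th power, and the rest is the Frobenius collapse of scalar coefficients plus reindexing.
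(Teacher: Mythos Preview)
Your plan has the right opening moves but a genuine gap at the crucial step. Using Lemma~\ref{th:a(x)^p} with $a=\omega$ is correct, and you are right that $\omega_i\omega\notin\BF\1$, so Corollary~\ref{th:a_n^p} does not apply. You are also right that Lemma~\ref{th:CMN-r} (equivalently, the $p$-semilinearity of $a\mapsto a^p-a^{[p]}$ in $U(\Vir)$) controls the cross terms in each $p$-th power. But that control says only
\[
X^p=\Bigl(\sum_j c_j L_{k_j}\Bigr)^p=\sum_j c_j^p\bigl(L_{k_j}^p-L_{k_j}^{[p]}\bigr)+X^{[p]},
\]
so the ``correction'' you need to add to $\sum_j c_j^p L_{k_j}^p$ is $X^{[p]}-\sum_j c_j^p L_{k_j}^{[p]}$, not just $-\sum_j c_j^p L_{k_j}^{[p]}$. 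Your sentence ``precisely this correction converts each $(L(m_j-1))^p$ into $(L(m_j-1))^p-\delta_{p\mid(m_j-1)}L((m_j-1)p)$'' is exactly the claim that $X_-^{[p]}=0$ and $X_+^{[p]}=0$ (or that their sum equals $\delta_{p\mid n}Y(L_{-np}\1,x)$), and you give no argument for this. It is not a consequence of the computation of $(\ad L_m)^p L_n$ in Lemma~\ref{th:restrictness}: that computation identifies $L_m^{[p]}$ for a \emph{single} basis vector, whereas $X_\pm^{[p]}$ involves the $s_i$-terms of the $p$-mapping on a genuine sum, which are nonzero Lie polynomials in the $L_{-n-j}$'s. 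The ``bookkeeping'' you anticipate is not just bookkeeping; it is the heart of the proof.

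The paper sidesteps this direct computation entirely. It first observes, from centrality of $L_{-n}^p-\delta_{p\mid n}L_{-np}$ and Borcherds' commutator formula, that every component operator $\bigl((L_{-n}^p-\delta_{p\mid n}L_{-np})\1\bigr)_t$ commutes with every $L_s$ on $V_{\Vir}(c,0)$. Then, for each power $x^m$, Lemma~\ref{th:CMN-r} is used only to say that the coefficient has the form (known diagonal central piece, present only when $p\mid m$) plus $\gamma\,L_{-np\mp m}$ for some unknown scalar $\gamma$. Since that coefficient must commute with all $L_s$, and a nonzero multiple of a single $L_k$ never commutes with $L_s$ for suitable $s$ acting on $V_{\Vir}(c,0)$, one solves for $\gamma$ by testing against a well-chosen $L_s$. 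This indirect determination of the scalars is the missing idea in your proposal; without it you would have to compute $X_\pm^{[p]}$ explicitly, which you have not done.
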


\begin{proof} Let $n\ge 2$.
Since $L_{-n}^p-\delta_{p\mid n}L_{-np}$ is in the center of $U(\Vir)$,
we have
\begin{equation*}
    L_i(L_{-n}^p-\delta_{p\mid n}L_{-np})\1=(L_{-n}^p-\delta_{p\mid n}L_{-np})L_i\1=0
\end{equation*}
for all $i\ge-1$, as $L_i\1=0$.
Then, by Borcherds' commutator formula, we get
\begin{equation}\label{eq:LiDm=0}
    [L_s,((L_{-n}^p-\delta_{p\mid n}L_{-np})\1)_t]=\sum_{i\ge0}\binom{s+1}{i}(L_{i-1}(L_{-n}^p-\delta_{p\mid n}L_{-np})\1)_{s+t+1-i}=0
\end{equation}
on $V_{\Vir}(c,0)$ for $s,t\in \BZ$.

By Lemma~\ref{D-module}, we have
\begin{align*}
    Y(L_{-np}\1,x)&=Y(\D^{(np-2)}L_{-2}\1,x)=\partial_x^{(np-2)}Y(\omega,x)\\
    &=\partial_x^{(np-2)}\sum_{l\in\BZ}L_l x^{-l-2}\\
    &=\sum_{l\in\BZ}\binom{l+np-2}{np-2}L_{-np-l} x^{l}.
\end{align*}
On the other hand, by Lemma~\ref{th:a(x)^p}, we have
\begin{align*}
    Y(L_{-n}^p\1,x)=\Biggl(\sum_{j\ge0}\binom{-n+1}{j}(-1)^j L_{-n-j}x^{j}\Biggr)^p
            +\Biggl(\sum_{j\ge0}\binom{-n+1}{j}(-1)^{-n-j}L_{j-1}x^{-n+1-j}\Biggr)^p.
\end{align*}
Combining the two identities above we get
\begin{align}
    &\phantom{=}\;\;Y((L_{-n}^p-\delta_{p\mid n}L_{-np})\1,x)\nonumber\\
    &=\Biggl(\sum_{j\ge0}\binom{-n+1}{j}(-1)^j L_{-n-j}x^{j}\Biggr)^p
            +\Biggl(\sum_{j\ge0}\binom{-n+1}{j}(-1)^{-n-j}L_{j-1}x^{-n+1-j}\Biggr)^p\nonumber\\
    &\quad-\delta_{p\mid n}\sum_{l\in\BZ}\binom{l+np-2}{np-2}L_{-np-l} x^{l}.
\end{align}
To prove our assertion, we are going to determine the coefficient of $x^{m}$ in the expression above for every $m\in \BZ$.

Let $m\in \BN$. We see that the coefficient of $x^m$ in
$Y((L_{-n}^p-\delta_{p\mid n}L_{-np})\1,x)$ is equal to
\begin{align}\label{eq:L-n^p+m}
    \sum_{m_1+\cdots+m_p=m}\biggl(\prod_{i=1}^p \binom{-n+1}{m_i}\biggr)(-1)^m L_{-n-m_1}\ldots L_{-n-m_p}-\delta_{p\mid n}\binom{m+np-2}{np-2}L_{-np-m}.
\end{align}
First, consider the case $p\nmid m$. Let $m_1,\dots,m_p\in \BN$ with $m_1+\cdots +m_p=m$.
Suppose $n_1,\dots,n_t$ are all the distinct elements in the sequence $m_1,m_2,\dots, m_p$, and
for $1\le j\le t$, let $r_j$ be the number of occurrences of $n_j$.
Since $p\nmid m$, we have $t\ge 2$, which implies $r_1<p$. Noticing that
\begin{equation*}
    [[\cdots[[L_{-n-m_1},L_{-n-m_2}],L_{-n-m_3}],\ldots],L_{-n-m_p}]\in \BF L_{-np-m},
\end{equation*}
by Lemma~\ref{th:CMN-r} we see that the expression \eqref{eq:L-n^p+m} is equal to $\alpha L_{-np-m}$ for some $\alpha \in\BF$.
That is, the coefficient of $x^m$ in
$Y((L_{-n}^p-\delta_{p\mid n}L_{-np})\1,x)$ equals $\alpha L_{-np-m}$ for some $\alpha \in\BF$.
By \eqref{eq:LiDm=0}, we have
$[\alpha L_{-np-m}, L_{s}]=0$ on $V_{\Vir}(c,0)$ for all $s\in \BZ$, which gives
\begin{equation*}
    (-np-m-s)\alpha L_{-np-m+s}\1+\frac{1}{2} \binom{-np-m+1}{3}\alpha\delta_{np+m,s}\ell\1=0.
\end{equation*}
Let $s\in \BZ$ be such that $-np-m-s\equiv 1\ \pmod{p}$ and $s-np-m\le -2$.
Then we get $\alpha L_{-np-m+s}\1=0$, noticing that $\delta_{np+m,s}=0$.
Since $L_{-np-m+s}\1\ne 0$ in $V_{\Vir}(c,0)$, we must have $\alpha =0$.
Thus, the coefficient of $x^{m}$ is zero in this case.

Assume $p\mid m$. Write $m=kp$ with $k\in \BN$. In this case, by a similar argument we see
that \eqref{eq:L-n^p+m} is equal to
\begin{align*}
    \binom{-n+1}{k}^p(-1)^{kp}L_{-n-k}^p+ \beta L_{-np-m}
\end{align*}
for some $\beta\in\BF$. By \eqref{eq:LiDm=0}, we have
\begin{equation*}
    \left[L_{-1},\binom{-n+1}{k}^p(-1)^{kp}L_{-n-k}^p+ \beta L_{-np-m}\right]=0.
\end{equation*}
As $L_{-n-k}^p-\delta_{p\mid(n+k)}L_{-np-kp}$, which lies in the center of $U(\Vir)$, commutes with $L_{-1}$, we get
\begin{equation*}
    \left(\beta+\delta_{p\mid(n+k)}(-1)^{kp}\binom{-n+1}{k}^p\right)[ L_{-1},L_{-np-m}]=0
\end{equation*}
on $V_{\rm Vir}(c,0)$. Since
\begin{equation*}
    [ L_{-1}, L_{-np-m}]\1=(np+m-1)L_{-np-m-1}\1=-L_{-np-m-1}\1\ne 0\ \mbox{ in }V_{\Vir}(c,0),
\end{equation*}
noticing that $\delta_{-np-m-1,0}=0$ and $-np-m-1\le -2$,
we obtain
\begin{equation*}
    \beta= -\delta_{p\mid(n+k)}(-1)^{kp}\binom{-n+1}{k}^{p}.
\end{equation*}
Thus, by Fermat's little theorem,
we see that the coefficient of $x^{m}$ is
\begin{align*}
    &\phantom{=}\;\;\binom{-n+1}{k}^p(-1)^{kp}(L_{-n-k}^p-\delta_{p\mid(n+k)} L_{-np-kp})\\
    &=\binom{-n+1}{k} (-1)^k(L_{-n-k}^p-\delta_{p\mid(n+k)} L_{-np-kp}).
\end{align*}

Next, we consider the coefficient of $x^{-m}$ with $m\ge (n-1)p$ in $Y((L_{-n}^p-\delta_{p\mid n}L_{-np})\1,x)$,
which is
\begin{align}\label{eq:L-n^p-m}
    \sum_{m_1+\cdots+m_p=m-(n-1)p}\biggl(\prod_{i=1}^p \binom{-n+1}{m_i}\biggr)(-1)^{-m-p} L_{m_1-1}\ldots L_{m_p-1}-\delta_{p\mid n}\binom{np-m-2}{np-2}L_{-np+m}.
\end{align}
Assume $p\nmid m$. By a similar argument used above, we see that \eqref{eq:L-n^p-m} is equal to $\alpha L_{-np+m}$
for some $\alpha \in\BF$. Then by \eqref{eq:LiDm=0}, we have
$[\alpha L_{-np+m}, L_{s}]=0$ on $V_{\Vir}(c,0)$ for all $s\in \BZ$, which implies
\begin{equation*}
    (-np+m-s)\alpha L_{-np+m+s}\1+\frac{1}{2} \binom{-np+m+1}{3}\alpha\delta_{-np+m+s,0}\ell\1=0.
\end{equation*}
Let $s\in\BZ$ be such that $-np+m-s\equiv 1 \pmod{p}$ and $-np+m+s\le -2$.
Then we have $\alpha L_{-np+m+s}\1=0$, noticing that $\delta_{-np+m+s,0}=0$.
Since $L_{-np+m+s}\1\ne 0$ in $V_{\Vir}(c,0)$,
it follows that $\alpha=0$, which shows that the coefficient of $x^{-m}$ is zero.

Assume $p\mid m$. Write $m=kp$ for some $k$. Similarly, we obtain \eqref{eq:L-n^p-m} is equal to
\begin{align*}
    \binom{-n+1}{k}^p (-1)^{-kp-p} L_{k-n}^p+\beta L_{kp-np}
\end{align*}
for some $\beta\in\BF$. Then by \eqref{eq:LiDm=0}, we have
\begin{equation*}
    \left[L_s,\binom{-n+1}{k}^p (-1)^{-kp-p} L_{k-n}^p+\beta L_{kp-np}\right]=0
\end{equation*}
for every $s\in\BZ$.
Since $L_{k-n}^p-\delta_{p\mid(k-n)}L_{kp-np}$ lies in the center of $U(\Vir)$,
it follows that
\begin{equation*}
    \left(\beta+\delta_{p\mid(k-n)}(-1)^{-kp-p}\binom{-n+1}{k}^p\right)[L_s,L_{kp-np}]=0
\end{equation*}
on $V_{\Vir}(c,0)$.
Let $s\in\BZ$ be such that $s\equiv 1 \pmod{p}$ and $s+kp-np\le -2$.
Then $[L_s,L_{kp-np}]=(s-kp+np)L_{s+kp-np}=L_{s+kp-np}$, and $L_{s+kp-np}\1\ne0$.
It follows that
\begin{equation*}
    \beta=-\delta_{p\mid(k-n)}(-1)^{-kp-p}\binom{-n+1}{k}^p.
\end{equation*}
By Fermat's little theorem,
we see that the the coefficient of $x^{-m}$ is
\begin{align*}
    &\phantom{=}\;\;\binom{-n+1}{k}^p (-1)^{-kp-p} (L_{k-n}^p-\delta_{p\mid(k-n)} L_{kp-np})\\
    &=\binom{-n+1}{k} (-1)^{-k-1} (L_{k-n}^p-\delta_{p\mid(k-n)} L_{kp-np}).
\end{align*}
This completes the proof.
\end{proof}

As an immediate consequence of Proposition \ref{Lnp-formula}, we have:

\begin{corollary}\label{some-facts}
The following hold in $V_{\Vir}(c,0)$ for $m\in \BN$ and $n\ge 2$:
\begin{equation}
    \D^{(m)}(L_{-n}^p-\delta_{p\mid n}L_{-np})\1=\begin{cases}
        (-1)^k\binom{-n+1}{k}(L_{-n-k}^p-\delta_{p\mid(n+k)}L_{-(n+k)p})\1& \text{ if } m=kp,\\
                                                                      0 & \text{ otherwise}.
                                               \end{cases}
\end{equation}
Furthermore, the subspace spanned by $(L_{-n}^p-\delta_{p\mid n}L_{-np})\1$ for $n\ge2$ is a $\B$-submodule.
\end{corollary}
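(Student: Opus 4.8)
The plan is to obtain this directly from Proposition \ref{Lnp-formula} by evaluating that operator identity at the vacuum vector $\1$ and comparing coefficients of $x^m$, using the basic relation $\D^{(m)}v=v_{-m-1}\1$, equivalently $Y(v,x)\1=e^{x\D}v=\sum_{m\ge0}x^m\D^{(m)}v$.

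First I would apply $Y\bigl((L_{-n}^p-\delta_{p\mid n}L_{-np})\1,x\bigr)$ to $\1$. The left-hand side becomes $\sum_{m\ge0}x^m\D^{(m)}(L_{-n}^p-\delta_{p\mid n}L_{-np})\1$. On the right-hand side of Proposition \ref{Lnp-formula}, the second sum annihilates $\1$: for $j\ge0$ one has $j-1\ge-1$, so $L_{j-1}\1=0$ and hence $L_{j-1}^p\1=0$, while $\delta_{p\mid(j-1)}L_{(j-1)p}\1=0$ as well, because $(j-1)p\ge0$ for $j\ge1$ and $p\nmid(j-1)$ for $j=0$ (recall $p\ge3$). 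Consequently the identity applied to $\1$ reduces to
\[
\sum_{m\ge0}x^m\D^{(m)}(L_{-n}^p-\delta_{p\mid n}L_{-np})\1
=\sum_{j\ge0}(-1)^j\binom{-n+1}{j}\bigl(L_{-n-j}^p-\delta_{p\mid(n+j)}L_{-(n+j)p}\bigr)\1\,x^{jp}.
\]
Matching coefficients of $x^m$ now gives the asserted formula: the coefficient is $0$ when $p\nmid m$, and equals $(-1)^k\binom{-n+1}{k}\bigl(L_{-n-k}^p-\delta_{p\mid(n+k)}L_{-(n+k)p}\bigr)\1$ when $m=kp$.

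For the $\B$-submodule claim I would let $M$ denote the span of the vectors $(L_{-n}^p-\delta_{p\mid n}L_{-np})\1$ with $n\ge2$ and check that $M$ is stable under each $\D^{(m)}$, $m\in\BN$. By the formula just established, $\D^{(m)}$ carries the generator indexed by $n$ either to $0$ (if $p\nmid m$) or, if $m=kp$, to a scalar multiple of the generator indexed by $n+k$; since $n+k\ge2$, this lies in $M$. As $\{\D^{(m)}\mid m\in\BN\}$ spans $\B$, it follows that $M$ is a $\B$-submodule.

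I do not anticipate any real obstacle: all the substance is in Proposition \ref{Lnp-formula}. The only points that need a moment's care are the vanishing of the second sum on $\1$ (handled above) and the remark that the vectors $L_{-n-k}^p\1$ and $L_{-(n+k)p}\1$ occurring on the right-hand side are exactly the spanning vectors of $M$, so that no new vectors are created.
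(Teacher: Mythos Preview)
Your proposal is correct and is exactly the intended argument: the paper states the corollary as an immediate consequence of Proposition \ref{Lnp-formula}, and the way to unpack that is precisely to apply the identity to $\1$, use $Y(v,x)\1=\sum_{m\ge0}x^m\D^{(m)}v$, and observe (as you do) that the second sum kills $\1$ since its $x$-exponents $(-n+1-j)p$ are all negative for $n\ge2$. Your verification of the $\B$-submodule claim is likewise the obvious one.
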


Recall  that a left ideal of a vertex algebra $V$ is an ideal if and only if it is a $\B$-submodule of $V$.
As for vertex algebra $V_{\Vir}(c,0)$, we see that a left ideal amounts to a $\Vir$-submodule of $V_{\Vir}(c,0)$.
Using Corollary \ref{some-facts}, we immediately obtain:

\begin{proposition}\label{th:ideal-Vir}
For any $\mu\in \BF$, the $\Vir$-submodule of $V_{\Vir}(c,0)$ generated by
\begin{equation*}
    (L_{-n}^p-\delta_{p\mid n}L_{-np}-\delta_{n,2}\mu^p)\1\   \   \   \mbox{  for }n\ge 2
\end{equation*}
is an ideal of vertex algebra $V_{\Vir}(c,0)$, which is denoted by $I_{\mu}$.
Furthermore, $I_{\mu}$ equals the ideal of $V_{\Vir}(c,0)$ generated by $(L_{-2}^p-\mu^{p})\1$.
\end{proposition}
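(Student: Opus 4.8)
The plan is to first show that $I_{\mu}$ is an ideal of $V_{\Vir}(c,0)$, and then to identify it with the ideal generated by the single vector $u:=(L_{-2}^p-\mu^p)\1$. Since $I_{\mu}$ is by construction a left ideal of the vertex algebra (equivalently, a $\Vir$-submodule, as recalled above), to prove it is an ideal it suffices to show that it is $\B$-stable.

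For the first assertion, I would write $g_n=(L_{-n}^p-\delta_{p\mid n}L_{-np}-\delta_{n,2}\mu^p)\1$ for $n\ge 2$ and set $W'=\spanf\{g_n\mid n\ge 2\}$, so that $I_{\mu}$ is the left ideal of $V_{\Vir}(c,0)$ generated by $W'$. The first step is to check that $W'$ itself is a $\B$-submodule. Since $p\ge 3$ we have $\delta_{p\mid 2}=0$, so $g_2=u$; then, using $\D^{(m)}\1=\1_{-m-1}\1=\delta_{m,0}\1$ together with Corollary~\ref{some-facts} and the identity $(-1)^k\binom{-1}{k}=1$, a direct computation gives, for $k\ge 1$,
\[
\D^{(kp)}g_2=\D^{(kp)}(L_{-2}^p\1)=(L_{-(2+k)}^p-\delta_{p\mid(2+k)}L_{-(2+k)p})\1=g_{2+k},
\]
together with $\D^{(0)}g_2=g_2$, $\D^{(m)}g_2=0$ for $p\nmid m$, and, for $n\ge 3$, $\D^{(kp)}g_n=(-1)^k\binom{-n+1}{k}g_{n+k}$ (with $n+k\ge 3$) and $\D^{(m)}g_n=0$ for $p\nmid m$. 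This shows $W'$ is $\B$-stable. The second step is to pass from $W'$ to the left ideal it generates: by the result of \cite{DM}, \cite{li-thesis} used in the proof of Lemma~\ref{singular-ideal} we have $I_{\mu}=\spanf\{v_m w\mid v\in V_{\Vir}(c,0),\ m\in\BZ,\ w\in W'\}$, and then, exactly as in the proof of Lemma~\ref{max-ideal}, formula \eqref{conjugation} yields
\[
\D^{(j)}(v_m w)=\sum_{i=0}^{j}(-1)^i\binom{m}{i}v_{m-i}\D^{(j-i)}w\in I_{\mu}
\]
for $j\in\BN$, since each $\D^{(j-i)}w\in W'\subseteq I_{\mu}$. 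Hence $I_{\mu}$ is $\B$-stable, and therefore an ideal of $V_{\Vir}(c,0)$.

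For the second assertion, let $I'$ be the ideal of $V_{\Vir}(c,0)$ generated by $u$. One inclusion is immediate: $u=g_2\in I_{\mu}$ and $I_{\mu}$ is an ideal, so $I'\subseteq I_{\mu}$. For the reverse inclusion, note that $I'$ is an ideal, hence $\B$-stable, so $\D^{(kp)}u\in I'$ for all $k\ge 0$; by the computation above $\D^{(kp)}u=g_{2+k}$ for $k\ge 1$, while $u=g_2\in I'$, so every generator $g_n$ ($n\ge 2$) of $I_{\mu}$ lies in $I'$, giving $I_{\mu}\subseteq I'$. Therefore $I_{\mu}=I'$.

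I expect no real obstacle here: all the substantive input is already isolated in Corollary~\ref{some-facts} (ultimately in Proposition~\ref{Lnp-formula}), and what remains is index bookkeeping. The one point to watch is that the correction term $-\delta_{n,2}\mu^p\1$, present only at $n=2$, does not spoil $\B$-stability; this is harmless precisely because $\D^{(m)}\1=0$ for $m\ne 0$ while $\D^{(0)}$ acts as the identity, so the vector $\mu^p\1$ is simply carried along by $\D^{(0)}$ and annihilated by every other $\D^{(m)}$.
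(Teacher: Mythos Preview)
Your proof is correct and follows essentially the same approach as the paper: the paper derives Proposition~\ref{th:ideal-Vir} directly from Corollary~\ref{some-facts} together with the observation that a left ideal of $V_{\Vir}(c,0)$ amounts to a $\Vir$-submodule and that an ideal is a $\B$-stable left ideal, leaving the passage from ``$\B$-stable generating set'' to ``$\B$-stable left ideal'' and the identification with the ideal generated by $(L_{-2}^p-\mu^p)\1$ implicit. You have simply made those two steps explicit, using the formula from the proof of Lemma~\ref{max-ideal} and the computation $\D^{(kp)}g_2=g_{2+k}$.
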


For $\mu\in \BF$, set
\begin{equation}\label{Vlambda}
V^{\mu}_{\Vir}(c,0)=V_{\Vir}(c,0)/I_{\mu},
\end{equation}
a $\Vir$-module and a vertex algebra. Note that the ideal $I_{\mu}$ is graded  if and only if $\mu=0$.
Especially, $V^{0}_{\Vir}(c,0)$ is a $\BZ$-graded vertex operator algebra.

\section{The Zhu algebras $A(V_{\Vir}(c,0))$ and $A(V^{0}_{\Vir}(c,0))$}
In this section, we continue to study the quotient vertex operator algebra $V^{0}_{\Vir}(c,0)$ of  $V_{\Vir}(c,0)$
for a general $c\in \BF$.
More specifically, we shall classify irreducible $\BN$-graded  $V^{0}_{\Vir}(c,0)$-modules
by determining the Zhu algebra $A(V^{0}_{\Vir}(c,0))$ explicitly.

Recall the vertex operator algebra $V_{\Vir}(c,0)$.
The following follows from the same arguments as in the case of characteristic zero
(see  \cite{W}; cf. \cite{DN}):

\begin{lemma}\label{th:L-nW}
For $n\ge 1$ and $v\in V_{\Vir}(c,0)$, we have
\begin{equation*}
    L_{-n}v\equiv (-1)^n\big((n-1)(L_{-2}+L_{-1})+L_0\big)v \pmod{O(V_{\Vir}(c,0))}.
\end{equation*}
\end{lemma}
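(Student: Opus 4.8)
The plan is to prove the congruence $L_{-n}v \equiv (-1)^n\big((n-1)(L_{-2}+L_{-1})+L_0\big)v \pmod{O(V_{\Vir}(c,0))}$ by induction on $n\ge 1$, using the structure of $O(V)$ together with the identity $\D^{(1)} = L_{-1}$ on $V_{\Vir}(c,0)$.

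\medskip

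First I would handle the base cases. For $n=1$, the claim reads $L_{-1}v \equiv -L_0 v \pmod{O(V)}$; but $L_{-1} = \D^{(1)}$ on $V_{\Vir}(c,0)$, and since $O(V)$ is a graded subspace it suffices to treat homogeneous $v$, for which Lemma~\ref{th:DnvmodO} gives $\D^{(1)}v \equiv (-\deg v)v = -L_0 v \pmod{O(V)}$. For $n=2$, one uses the definition of $O(V)$: the element $\omega \circ_0 v = \sum_{i\ge 0}\binom{2}{i}\omega_{i-2}v = L_{-2}v + 2L_{-1}v + L_0 v$ lies in $O(V)$ (here $\omega_{i-2} = L(i-3)$ in the $L(n)$-indexing, so $\omega_{-2} = L_{-2}$, $\omega_{-1} = L_{-1}$, $\omega_0 = L_0$ as operators), whence $L_{-2}v \equiv -2L_{-1}v - L_0 v \equiv 2L_0 v - L_0 v = L_0 v \pmod{O(V)}$, matching $(-1)^2((2-1)(L_{-2}+L_{-1}) + L_0)v$ after the $n=1$ reduction.

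\medskip

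For the inductive step with $n\ge 3$, the key tool is to expand $\omega \circ_{n-2} v = \Res_z \frac{(1+z)^2}{z^n} Y(\omega,z)v \in O(V)$. Writing $Y(\omega,z) = \sum_{m}L(m)z^{-m-2}$ and extracting the residue yields a relation of the form $L_{-n}v + (\text{binomial})\,L_{-n+1}v + (\text{binomial})\,L_{-n+2}v \in O(V)$, expressing $L_{-n}v$ modulo $O(V)$ in terms of $L_{-n+1}v$ and $L_{-n+2}v$. Then I would substitute the induction hypothesis for $L_{-n+1}v$ and $L_{-n+2}v$, reduce everything to the span of $L_{-2}v$, $L_{-1}v$, $L_0 v$ (using the $n=1,2$ cases), and verify that the resulting coefficients collapse to exactly $(-1)^n\big((n-1)(L_{-2}+L_{-1}) + L_0\big)$. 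This final coefficient bookkeeping — checking that the $\binom{2}{i}$-weighted combination of two consecutive terms of the claimed formula reproduces the next term — is the main obstacle, though it is a routine (if slightly tedious) binomial identity verification; the identity $\binom{0}{k}=0$ style cancellations of the type already used in Lemma~\ref{th:DnvmodO} are what make it work.

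\medskip

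One subtlety worth flagging: since $O(V)$ is graded but $v$ need not be homogeneous, I would first note that it suffices to prove the statement for homogeneous $v$ and then extend by linearity; alternatively, every step above ($\omega\circ_n v \in O(V)$, Lemma~\ref{th:DnvmodO}) applies componentwise, so this reduction is harmless. With that in hand, the induction closes and the proof is complete.
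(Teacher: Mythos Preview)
Your approach is the standard one --- the paper itself gives no proof and simply cites the characteristic-zero argument of Wang (cf.\ Dong--Nagatomo) --- but you have an off-by-one slip in the $\omega$-indexing that propagates through.  From $Y(\omega,z)=\sum_{n}L(n)z^{-n-2}$ one has $\omega_m=L(m-1)$, so $\omega_{-2}=L_{-3}$, $\omega_{-1}=L_{-2}$, $\omega_0=L_{-1}$ (not $L_{-2},L_{-1},L_0$ as you wrote).  Consequently
\[
\omega\circ_0 v=\sum_{i\ge0}\binom{2}{i}\omega_{i-2}v
= L_{-3}v+2L_{-2}v+L_{-1}v,
\]
which is the recursion feeding $n=3$, not an $n=2$ base case; the expression $L_{-2}v+2L_{-1}v+L_0v$ you wrote down is $\omega*v$, not $\omega\circ_0 v$, and does not lie in $O(V)$.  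For the inductive step you want $\omega\circ_{n-3}v\in O(V)$ for $n\ge3$, which gives
\[
L_{-n}v\equiv -2L_{-n+1}v-L_{-n+2}v\pmod{O(V)},
\]
and then the induction closes exactly as you outlined: the coefficient check is $2(n-2)-(n-3)=n-1$ on $L_{-2}+L_{-1}$ and $2-1=1$ on $L_0$.  The $n=2$ case needs no separate computation at all: the claimed congruence for $n=2$ reads $L_{-2}v\equiv L_{-2}v+L_{-1}v+L_0v$, i.e.\ $L_{-1}v+L_0v\equiv 0$, which is precisely the $n=1$ case.
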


Recall that for $u\in V_{\Vir}(c,0)$,
\begin{equation*}
    [u]=u+O(V_{\Vir}(c,0)) \in A(V_{\Vir}(c,0)).
\end{equation*}
From \cite{DR1}, we have
\begin{equation}\label{fromDR}
[(L_{-2}+L_{-1})u]=[\omega]*[u]\    \    \    \mbox{  for }u\in V_{\Vir}(c,0),
\end{equation}
where $\omega=L_{-2}\1$ is the conformal vector.
Using the same arguments as in the case of characteristic zero
(see \cite{FZ}), we have:

\begin{proposition}\label{th:AV-Vir}
The linear map $f(x)\mapsto f([\omega])$ for $f(x)\in \BF[x]$ is an algebra isomorphism from
 $\BF[x]$ onto $A(V_{\Vir}(c,0))$.
\end{proposition}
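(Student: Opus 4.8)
The plan is to show that $f(x)\mapsto f([\omega])$ is a well-defined algebra homomorphism $\BF[x]\to A(V_{\Vir}(c,0))$, that it is surjective, and that it is injective. Well-definedness and the homomorphism property are automatic once we know $[\omega]$ generates a commutative subalgebra (which it does, being a single element), so the substance is surjectivity and injectivity. For surjectivity, I would argue that $A(V_{\Vir}(c,0))$ is spanned by powers of $[\omega]$. The key input is Lemma~\ref{th:L-nW}, which lets me replace any factor $L_{-n}$ acting on a vector by a combination of $L_{-2}$, $L_{-1}$, $L_0$ modulo $O(V_{\Vir}(c,0))$; combined with \eqref{fromDR}, which says $[(L_{-2}+L_{-1})u]=[\omega]*[u]$, and Lemma~\ref{th:DnvmodO}, which handles $L_{-1}=\D^{(1)}$ (so $[L_{-1}u]=(-\deg u)[u]$), I can inductively reduce a general PBW monomial $L(-n_1)\cdots L(-n_r)\1$ with each $n_i\ge 2$ to a scalar multiple of $[\omega]^{*(r)}$ plus lower-length terms. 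Running the induction on the length $r$ (equivalently on the degree) shows $A(V_{\Vir}(c,0))=\sum_{r\ge 0}\BF[\omega]^{*r}$, hence $f$ is onto.

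For injectivity, the standard route is to exhibit, for each scalar $h\in\BF$, an $\BN$-graded (or at least lowest-weight) $V_{\Vir}(c,0)$-module on which $[\omega]$ acts as $h$; since $\BF$ is infinite (indeed algebraically closed), a nonzero polynomial $f$ cannot annihilate $[\omega]$ because it would then have to vanish at infinitely many values of $h$. Concretely, I would take the Verma-type module: the lowest-weight $\Vir$-module $M(c,h)$ generated by a vector $v_h$ with $\bc v_h=cv_h$, $L_0v_h=hv_h$, $L_n v_h=0$ for $n\ge 1$. This is a locally truncated $\Vir$-module of central charge $c$, hence by the Proposition preceding the $V_{\Vir}(c,0)$-vertex-operator-algebra statement it carries a $V_{\Vir}(c,0)$-module structure. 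Using Proposition~\ref{WtoA(V)module}, $\Omega(M(c,h))\ni v_h$ is an $A(V_{\Vir}(c,0))$-module on which $[\omega]=\omega_{1}=L_0$ acts as $h$ on $v_h$. Therefore $[f(\omega)]$ acts as $f(h)$ on $v_h$; if $[f(\omega)]=0$ in $A(V_{\Vir}(c,0))$ then $f(h)=0$ for all $h\in\BF$, forcing $f=0$.

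The main obstacle is the surjectivity induction: one has to be careful that the reduction using Lemma~\ref{th:L-nW} is applied only to the \emph{outermost} factor of a monomial and that the "lower terms" produced (involving $L_{-1}$ and $L_0$ acting on the tail) genuinely have smaller length after one more application of \eqref{fromDR} and Lemma~\ref{th:DnvmodO}, so that the induction is well-founded; since $V_{\Vir}(c,0)_{(1)}=0$ and $V_{\Vir}(c,0)_{(0)}=\BF\1$, the base cases are clean. A secondary point worth checking is that $\omega_1$ really acts as $L_0$ on modules, i.e.\ that the grading operator and the conformal weight agree, which is immediate from $Y(\omega,x)=\sum_n L(n)x^{-n-2}$ and the definition of a vertex operator algebra. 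None of these steps is difficult once the bookkeeping is set up, and the argument parallels the characteristic-zero proof of Frenkel--Zhu almost verbatim, the only modular subtlety being the use of an algebraically closed (hence infinite) base field to deduce $f=0$ from $f$ having infinitely many roots.
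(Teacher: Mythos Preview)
Your proposal is correct and follows essentially the same approach as the paper: the paper does not give an independent proof but states that the result follows by ``the same arguments as in the case of characteristic zero (see \cite{FZ}),'' and your surjectivity argument via Lemma~\ref{th:L-nW} and \eqref{fromDR} together with your injectivity argument via Verma modules $M(c,h)$ is precisely the Frenkel--Zhu/Wang argument adapted to characteristic $p$ (the only modular point being that $\BF$, as an algebraically closed field, is infinite). One minor remark: once you invoke \eqref{fromDR} for the combination $(L_{-2}+L_{-1})$ and use that $L_0$ acts by the degree, you do not separately need Lemma~\ref{th:DnvmodO} for $L_{-1}$ in the induction---the reduction already closes.
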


Furthermore, we have:

\begin{lemma}\label{th:L-n^pomega}
The following relations hold in $A(V_{\Vir}(c,0))$ for $n\ge 1$:
 \begin{equation}\label{pcenter-Zhu}
 [(L_{-n}^p -\delta_{p\mid n}L_{-np})\1]= (-1)^{pn}(n-1)([\omega]^p-[\omega]).
 \end{equation}
In particular, we have $[L_{-2}^p \1] =[\omega]^p-[\omega]$.
\end{lemma}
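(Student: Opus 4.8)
The plan is to compute the image of $(L_{-n}^p-\delta_{p\mid n}L_{-np})\1$ in the Zhu algebra $A(V_{\Vir}(c,0))$ by carefully applying the explicit vertex-operator formula of Proposition~\ref{Lnp-formula} together with the $*$-product structure. First I would recall that for a homogeneous vector $a$ of degree $d$, the class $[a]$ can be extracted from $Y(a,z)$ via $a*\1=\Res_z\frac{(1+z)^{d}}{z}Y(a,z)\1$; more to the point, since $a*\1=a$ and $\1$ is the identity, one has $[a]=[a_{-1}\1]$ tautologically, so what is really needed is to rewrite $(L_{-n}^p-\delta_{p\mid n}L_{-np})\1$ modulo $O(V_{\Vir}(c,0))$ using the relations already available: Lemma~\ref{th:L-nW}, the isomorphism of Proposition~\ref{th:AV-Vir}, and equation~\eqref{fromDR}. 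The cleanest route is probably an induction on $n$, using Corollary~\ref{some-facts}, which tells us that $\D^{(p)}$ sends $(L_{-n}^p-\delta_{p\mid n}L_{-np})\1$ to a scalar multiple of $(L_{-n-1}^p-\delta_{p\mid(n+1)}L_{-(n+1)p})\1$, combined with Lemma~\ref{th:DnvmodO}, which controls how $\D^{(k)}$ acts modulo $O(V)$ in terms of binomial coefficients in the degree.

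The key computation is the base case $n=1$ (or perhaps $n=2$, since $L_{-1}\1=0$ makes $n=1$ slightly degenerate and one may want to treat $[L_{-1}^p\1]$ directly — note $L_{-1}\1=0$ so $L_{-1}^p\1=0$ and the right side $(-1)^p(1-1)(\cdots)=0$ matches trivially). For $n=2$ the claim reduces to $[L_{-2}^p\1]=[\omega]^p-[\omega]$. Here I would use Corollary~\ref{th:a_n^p}: since $\omega=L_{-2}\1$ satisfies $\omega_i\omega\in\BF\1$ for $i\in\BN$ (because $V_{\Vir}(c,0)_{(1)}=0$ and $V_{\Vir}(c,0)_{(0)}=\BF\1$, the relevant products $L_0\omega, L_1\omega, L_2\omega, L_3\omega$ all land in $\BF\omega\oplus\BF\1$ — wait, $L_0\omega=2\omega\notin\BF\1$, so one must instead apply the general Lemma~\ref{th:a(x)^p}, not the Corollary). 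So the honest approach: apply Lemma~\ref{th:a(x)^p} with $a=\omega$, $n=2$, to get $Y(L_{-2}^p\1,x)$ as a sum of two $p$-th powers of ``half'' the field $Y(\omega,x)$, then read off the coefficient of $x^{-2}$, which is $(L_{-2}^p\1)_{-1}$ wait — one extracts $[L_{-2}^p\1]$ as the appropriate residue. Comparing with $Y(\omega,x)^p$-type expressions and using that in $A(V)$ we have $[\omega^p\ldots]$ expressible via $[\omega]$ by Proposition~\ref{th:AV-Vir}, one identifies the class. The factor $[\omega]^p-[\omega]$ (rather than $[\omega]^p$) arises precisely from the ``negative modes'' part of the $p$-th power in Lemma~\ref{th:a(x)^p}, i.e. from the $L_0$-term, via Fermat's little theorem applied to the eigenvalue.

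The inductive step then proceeds by applying $\D^{(p)}$: by Corollary~\ref{some-facts}, $\D^{(p)}(L_{-n}^p-\delta_{p\mid n}L_{-np})\1 = (-1)\binom{-n+1}{1}(L_{-n-1}^p-\delta_{p\mid(n+1)}L_{-(n+1)p})\1 = (n-1)(L_{-n-1}^p-\cdots)\1$. On the other hand, by Lemma~\ref{th:DnvmodO}, $\D^{(p)}$ applied to a vector of degree $np$ is congruent mod $O(V)$ to $\binom{-np}{p}$ times that vector, and $\binom{-np}{p}\equiv \binom{-n}{1}=-n\equiv$ something mod $p$ — I would need to check $\binom{-np}{p}\bmod p$ using Lucas' theorem: $-np\equiv -n\cdot p$, and $\binom{-np}{p}$... this requires the ``simple fact on binomial coefficients'' promised in the Appendix (Lemma~\ref{simple-facts}). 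Matching the scalar $(n-1)$ picked up on the geometric side against the binomial scalar on the Zhu side, and using the inductive hypothesis $[(L_{-n}^p-\cdots)\1]=(-1)^{pn}(n-1)([\omega]^p-[\omega])$, one deduces the formula for $n+1$. \textbf{The main obstacle} I anticipate is bookkeeping the sign $(-1)^{pn}$ and the scalar factors consistently: reconciling the coefficient $(-1)^k\binom{-n+1}{k}$ from Corollary~\ref{some-facts} with the binomial coefficient $\binom{-np}{p}$ from Lemma~\ref{th:DnvmodO} modulo $p$ is where the delicate arithmetic lives, and getting the $n=2$ base case right from Lemma~\ref{th:a(x)^p} — correctly isolating which modes contribute to the residue defining the Zhu class — is the other place where care is needed.
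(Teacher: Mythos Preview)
Your approach differs substantially from the paper's, and as written it has two genuine gaps.

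First, the inductive step via $\D^{(p)}$ breaks when $n\equiv 1\pmod p$. From Corollary~\ref{some-facts} one has
\[
\D^{(p)}\big((L_{-n}^p-\delta_{p\mid n}L_{-np})\1\big)=(n-1)\big((L_{-n-1}^p-\delta_{p\mid(n+1)}L_{-(n+1)p})\1\big),
\]
so when $n\equiv 1\pmod p$ the right side vanishes in $V_{\Vir}(c,0)$ and no information about the $(n{+}1)$-case can be extracted. (This is repairable: apply $\D^{(kp)}$ directly from $n=2$ for each $k\ge 1$, since the coefficient $(-1)^k\binom{-1}{k}=1$ never vanishes. But that is not the argument you wrote.)

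Second, your base case $n=2$ is not on solid ground. The class $[L_{-2}^p\1]\in A(V_{\Vir}(c,0))$ is not ``the appropriate residue'' of $Y(L_{-2}^p\1,x)$; the Zhu algebra is the quotient $V/O(V)$, not a space of modes, so Lemma~\ref{th:a(x)^p} (which computes a vertex operator) does not by itself hand you the class modulo $O(V)$. You would still have to translate the mode formula into congruences in $V/O(V)$, which effectively means reproving and using something like Lemma~\ref{th:L-nW} anyway. Also, the Appendix Lemma~\ref{simple-facts} concerns the $\B$-module structure on $\Vir$ and has nothing to do with $\binom{-np}{p}\bmod p$.

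The paper's proof is much more direct and avoids both issues: it simply iterates Lemma~\ref{th:L-nW}. Using $L_{-n}v\equiv(-1)^n\big((n-1)(L_{-2}+L_{-1})+L_0\big)v$ together with $[(L_{-2}+L_{-1})u]=[\omega][u]$ and $L_0 L_{-n}^{k}\1=nk\,L_{-n}^{k}\1$, one obtains in one stroke
\[
[L_{-n}^p\1]=(-1)^{pn}\prod_{k=0}^{p-1}\big((n-1)[\omega]+nk\big).
\]
If $p\nmid n$ the numbers $nk$ for $0\le k\le p-1$ run over all of $\BF_p$, so the product equals $(n-1)^p[\omega]^p-(n-1)[\omega]=(n-1)([\omega]^p-[\omega])$ by Fermat; if $p\mid n$ it equals $(n-1)[\omega]^p$. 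A one-line computation of $[L_{-np}\1]$ via Lemma~\ref{th:L-nW} then finishes. No induction, no Proposition~\ref{Lnp-formula} or Corollary~\ref{some-facts}, and no mod-$p$ binomial identities beyond Fermat are needed.
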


\begin{proof}
Using Lemma~\ref{th:L-nW} and equation (\ref{fromDR}), we have
\begin{align*}
    [L_{-n}^p\1]
    &= (-1)^n\big[\big((n-1)(L_{-2}+L_{-1})+L_0\big)L_{-n}^{p-1}\1\big]\\
    &= (-1)^n\big((n-1)[\omega]+n(p-1)\big)*[L_{-n}^{p-1}\1]\\
    &= (-1)^{2n}\big((n-1)[\omega]+n(p-1)\big)*\big((n-1)[\omega]+n(p-2)\big)*[L_{-n}^{p-2}\1]\\
    &=\cdots\\
    &= (-1)^{pn}\big((n-1)[\omega]+n(p-1)\big)*\big((n-1)[\omega]+n(p-2)\big)*\cdots*(n-1)[\omega].
\end{align*}
If $p\nmid n$, noticing that $\{ kn+p\BZ\mid 0\le k\le p-1\}=\BZ_{p}\subset \BF$,  we have
\begin{align*}
    &\phantom{=}\;\;(-1)^{pn}\big((n-1)[\omega]+n(p-1)\big)*\big((n-1)[\omega]+n(p-2)\big)*\cdots*(n-1)[\omega]\\
    &= (-1)^{pn} \big((n-1)^p[\omega]^p- (n-1)[\omega]\big)\\
    &= (-1)^{pn}(n-1) \big([\omega]^p-[\omega]\big).
\end{align*}
If $p\mid n$, we have
\begin{align*}
    &\phantom{=}\;\;(-1)^{pn}\big((n-1)[\omega]+n(p-1)\big)*\big((n-1)[\omega]+n(p-2)\big)*\cdots*(n-1)[\omega]\\
    &= (-1)^{pn}(n-1)[\omega]^p.
\end{align*}
Thus
\begin{eqnarray}
 [L_{-n}^p\1]=(-1)^{pn}(n-1) \big([\omega]^p-[\omega]\big)+\delta_{p\mid n}(-1)^{pn}(n-1)[\omega].
\end{eqnarray}
On the other hand, by Lemma \ref{th:L-nW} we have
\begin{equation*}
    [L_{-np}\1]=[(-1)^{pn}\big((n-1)(L_{-2}+L_{-1})+L_0\big)\1]=(-1)^{pn}(n-1)[\omega].
\end{equation*}
Then (\ref{pcenter-Zhu}) follows immediately.
\end{proof}

Recall that $V^{0}_{\Vir}(c,0)=V_{\Vir}(c,0)/I_0$, where
$I_0$ is an ideal which is the $\Vir$-submodule of $V_{\Vir}(c,0)$
generated by $(L_{-n}^p -\delta_{p\mid n}L_{-np})\1$ for $n\ge 2$ (see Proposition~\ref{th:ideal-Vir}).

As the main result of this section, we have:

\begin{theorem}\label{main-Vir}
For any $c\in \BF$, the algebra isomorphism in Proposition \ref{th:AV-Vir} from $\BF[x]$ to $A(V_{\Vir}(c,0))$
reduces to an isomorphism from  $\BF[x]/(x^p-x)\BF[x]$ to $A(V^{0}_{\Vir}(c,0))$.
\end{theorem}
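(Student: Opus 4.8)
The plan is to exploit the fact that $V^{0}_{\Vir}(c,0) = V_{\Vir}(c,0)/I_{0}$ together with the general compatibility between taking quotient vertex algebras and taking Zhu algebras. The first step is to recall that the quotient map $\pi\colon V_{\Vir}(c,0)\to V^{0}_{\Vir}(c,0)$ induces a surjective algebra homomorphism $\bar{\pi}\colon A(V_{\Vir}(c,0))\to A(V^{0}_{\Vir}(c,0))$, because $\pi$ maps $O(V_{\Vir}(c,0))$ into $O(V^{0}_{\Vir}(c,0))$ and maps the $*$-product to the $*$-product; concretely $A(V^{0}_{\Vir}(c,0))$ is the quotient of $A(V_{\Vir}(c,0))$ by the image $(I_{0}+O(V_{\Vir}(c,0)))/O(V_{\Vir}(c,0))$ of the ideal $I_{0}$. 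Composing with the isomorphism $\BF[x]\xrightarrow{\sim} A(V_{\Vir}(c,0))$, $f(x)\mapsto f([\omega])$ of Proposition~\ref{th:AV-Vir}, it then suffices to identify the preimage in $\BF[x]$ of that image of $I_{0}$ and show it equals the principal ideal $(x^{p}-x)\BF[x]$.

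The second step, the containment $(x^{p}-x)\BF[x]\subseteq$ (image of $I_{0}$): since $I_{0}$ contains $(L_{-2}^{p}-\mu^{p})|_{\mu=0}\1 = L_{-2}^{p}\1$ (indeed $I_{0}=I_{0}$ in the notation of Proposition~\ref{th:ideal-Vir} with $\mu=0$, generated by $(L_{-n}^{p}-\delta_{p\mid n}L_{-np})\1$), Lemma~\ref{th:L-n^pomega} gives $[L_{-2}^{p}\1] = [\omega]^{p}-[\omega]$ in $A(V_{\Vir}(c,0))$. Hence under the isomorphism of Proposition~\ref{th:AV-Vir} the element $x^{p}-x$ lies in the preimage of the image of $I_{0}$, and since that preimage is an ideal of $\BF[x]$, all of $(x^{p}-x)\BF[x]$ does. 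This direction is routine.

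The third step, the reverse containment, is the main obstacle: one must show that the image of $I_{0}$ in $A(V_{\Vir}(c,0))\cong\BF[x]$ is no larger than $(x^{p}-x)\BF[x]$, equivalently that $[w]\in (x^{p}-x)\BF[x]$ for every $w\in I_{0}$. Here I would use that $I_{0}$, being a left ideal, is spanned by elements $u_{m}\D^{(k)}\big((L_{-n}^{p}-\delta_{p\mid n}L_{-np})\1\big)$ with $u\in V_{\Vir}(c,0)$, $m\in\BZ$, $k\in\BN$, $n\ge 2$ (by the spanning result from \cite{DM}, \cite{li-thesis} quoted in the proof of Lemma~\ref{singular-ideal}). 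Using Corollary~\ref{some-facts}, each $\D^{(k)}\big((L_{-n}^{p}-\delta_{p\mid n}L_{-np})\1\big)$ is, up to a scalar, again of the form $(L_{-n'}^{p}-\delta_{p\mid n'}L_{-n'p})\1$ for some $n'\ge 2$ (or zero), so $I_{0}$ is spanned by $u_{m}\big((L_{-n}^{p}-\delta_{p\mid n}L_{-np})\1\big)$. Now apply $[\,\cdot\,]$ to such an element: in $A(V_{\Vir}(c,0))$, $[u_{m}v]$ can be rewritten, via the standard identities of Zhu's theory (the relations defining $O(V)$, plus $[u*v]=[u][v]$ and Lemma~\ref{th:DnvmodO}), as a polynomial in $[\omega]$ times $[v]$ — more precisely $[u_m v]$ lies in the ideal of $A(V_{\Vir}(c,0))$ generated by $[v]$ for $v=(L_{-n}^{p}-\delta_{p\mid n}L_{-np})\1$ whenever $u_m$ lowers degree or, for degree-raising modes, vanishes modulo $O(V)$; and Lemma~\ref{th:L-n^pomega} identifies $[v]=(-1)^{pn}(n-1)([\omega]^{p}-[\omega])$, which corresponds to $(-1)^{pn}(n-1)(x^{p}-x)\in (x^{p}-x)\BF[x]$. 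The delicate point to handle carefully is exactly this: controlling $[u_{m}v]$ when $u_{m}$ is a degree-raising mode, where one needs that products $a\circ_{j}b$ cover enough relations, or alternatively one invokes that $A(V)$ is generated by $[\omega]$ (Proposition~\ref{th:AV-Vir}) so that $[u_m v]$ is automatically a polynomial in $[\omega]$ times lower-filtration pieces, letting an induction on degree close the argument and force the image of $I_0$ into the ideal generated by $[\omega]^{p}-[\omega]$.

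Assembling these, $\bar\pi$ induces an algebra isomorphism $\BF[x]/(x^{p}-x)\BF[x]\xrightarrow{\sim} A(V^{0}_{\Vir}(c,0))$ sending $x+(x^{p}-x)\BF[x]$ to $[\omega]$, which is exactly the asserted reduction of the isomorphism of Proposition~\ref{th:AV-Vir}. I would also remark that this immediately re-derives, via Propositions~\ref{WtoA(V)module} and \ref{A(V)toV}, the stated count of $p$ inequivalent irreducible $\BN$-graded modules, since $\BF[x]/(x^{p}-x)\BF[x]\cong\prod_{a\in\BZ_{p}}\BF$ has exactly $p$ irreducible modules over the algebraically closed field $\BF$.
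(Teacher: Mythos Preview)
Your overall strategy matches the paper's exactly: reduce to showing that the image of $I_{0}$ in $A(V_{\Vir}(c,0))\cong\BF[x]$ is precisely $(x^{p}-x)\BF[x]$, and handle the two inclusions separately. Your second step (the inclusion $(x^{p}-x)\BF[x]\subseteq[I_{0}]$) is fine and is essentially what the paper does, though the paper writes out $[(L_{-2}+L_{-1})^{k}L_{-2}^{p}\1]=[\omega]^{k}([\omega]^{p}-[\omega])$ explicitly rather than invoking that the image is already known to be an ideal.

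Your third step, however, is left genuinely incomplete. The claim that ``$[u_{m}v]$ lies in the ideal of $A(V_{\Vir}(c,0))$ generated by $[v]$'' is not a general fact about Zhu algebras, and your fallback (``$A(V)$ is generated by $[\omega]$, so an induction on degree closes the argument'') does not by itself force the image of $I_{0}$ to be contained in $([\omega]^{p}-[\omega])\BF[[\omega]]$; knowing that every element of $A(V)$ is a polynomial in $[\omega]$ says nothing about which polynomial. You have identified the obstacle correctly but not dissolved it.

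The paper's argument here is much cleaner and you should adopt it. Because the generators $v_{n}=(L_{-n}^{p}-\delta_{p\mid n}L_{-np})\1$ satisfy $L_{i}v_{n}=0$ for all $i\ge -1$ (they are built from central elements acting on $\1$), the $\Vir$-submodule $I_{0}$ is spanned by iterates $L_{-m_{1}}\cdots L_{-m_{r}}v_{n}$ with $m_{i}\ge 2$. Lemma~\ref{th:L-nW} together with \eqref{fromDR} gives, for any homogeneous $u$,
\[
[L_{-m}u]=(-1)^{m}\bigl((m-1)[\omega]+\deg u\bigr)[u],
\]
so a single induction on $r$ shows every such iterate has image in $\BF[[\omega]]\cdot[v_{n}]$, and Lemma~\ref{th:L-n^pomega} places $[v_{n}]$ in $\BF\cdot([\omega]^{p}-[\omega])$. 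This replaces your vague handling of arbitrary modes $u_{m}$ by a one-line computation for the only modes that actually matter, namely $L_{-m}$ with $m\ge 2$.
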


\begin{proof} Set $\bar{I}_{0}=\{[u]\mid u\in I_0\}\subset A(V_{\Vir}(c,0))$.  From \cite{FZ},
$\bar{I}_{0}$ is an ideal and
$A(V^{0}_{\Vir}(c,0))\cong A(V_{\Vir}(c,0))/\bar{I}_{0}$.
Now, it suffices to prove
\begin{equation*}
    \{[u]\mid u\in I_0\}\  (=\bar{I}_{0})=([\omega]^p-[\omega])\BF[\omega].
\end{equation*}
By Lemma~\ref{th:L-n^pomega},
we have $[(L_{-n}^p -\delta_{p\mid n}L_{-np})\1] \in \BF\cdot([\omega]^p-[\omega])$
for all $n\ge 2$. Let $u$ be any $\BZ$-homogeneous vector in $V_{\Vir}(c,0)$ such that $[u]\in ([\omega]^p-[\omega])\BF[\omega]$.
 For any positive integer $m$,  using Lemma~\ref{th:L-nW} and equation (\ref{fromDR}),  we get
\begin{align*}
    [L_{-m}u]= (-1)^m[\big((m-1)(L_{-2}+L_{-1})+L_0\big)u]
    = (-1)^m(m-1)[\omega][u]+(-1)^m (\deg u)[u],
\end{align*}
which lies in $([\omega]^p-[\omega])\BF[\omega]$.
As $I_0$ is a $\Vir$-submodule of $V_{\Vir}(c,0)$
generated by $(L_{-n}^p -\delta_{p\mid n}L_{-np})\1$ for $n\ge 2$,
it then follows  that $\{[u]\mid u\in I_0\}\subset ([\omega]^p-[\omega])\BF[\omega]$.

On the other hand, using Lemma~\ref{th:L-n^pomega} and equation (\ref{fromDR}) we get
\begin{equation*}
    [(L_{-2}+L_{-1})^k L_{-2}^p\1]=[\omega]^k ([\omega]^p-[\omega])
\end{equation*}
for $k\ge 0$. Notice that $(L_{-2}+L_{-1})^k L_{-2}^p\1 \in I_0$ as $L_{-2}^{p}\1\in I_0$.
It then follows that $([\omega]^p-[\omega])\BF[\omega]\subset\{[u]\mid u\in I_0\}$.
Therefore, we have $\{[u]\mid u\in I_0\}=([\omega]^p-[\omega])\BF[\omega]$. This completes the proof.
\end{proof}

As $\Vir$ is a $\BZ$-graded Lie algebra, $U(\Vir)$ is a $\BZ$-graded associative algebra.
Let $c,\lambda\in \BF$. Set
\begin{equation}
J(c,\lambda)=U(\Vir)(\bc-c)+U(\Vir)(L_0-\lambda)+\sum_{m\ge 1}U(\Vir)L_{m},
\end{equation}
a graded left ideal of $U(\Vir)$. Then define a Verma module
\begin{equation}
M_{\Vir}(c,\lambda)=U(\Vir)/J(c,\lambda),
\end{equation}
which is naturally a $\BZ$-graded $\Vir$-module.
Set
\begin{equation*}
    v_{c,\lambda}=1+J(c,\lambda)\in M_{\Vir}(c,\lambda),
\end{equation*}
a generator of $\Vir$-module $M_{\Vir}(c,\lambda)$.
We have
\begin{equation*}
    M_{\Vir}(c,\lambda)=\bigoplus_{n\in \BN}M_{\Vir}(c,\lambda)(n),
\end{equation*}
where $M_{\Vir}(c,\lambda)(0)=\BF v_{c,\lambda}$.
 Denote by $L_{\Vir}(c,\lambda)$ the quotient module of $M_{\Vir}(c,\lambda)$ by the (unique) maximal graded
 submodule. Then $L_{\Vir}(c,\lambda)$ is an irreducible $\BN$-graded $\Vir$-module.
On the other hand, it is clear that  any irreducible $\BN$-graded $\Vir$-module is of this form.
 In terms of $V_{\Vir}(c,0)$-modules, we have that  $L_{\Vir}(c,\lambda)$ is
 an irreducible $\BN$-graded $V_{\Vir}(c,0)$-module for any $\lambda\in \BF$ and
 any irreducible $\BN$-graded $V_{\Vir}(c,0)$-module is of this form.

 Combining Propositions \ref{WtoA(V)module} and \ref{A(V)toV} with
 Theorem \ref{main-Vir} we immediately have:

\begin{corollary}
For every $n\in\BF_p=\{0,1,\ldots,p-1\}$,
$L_{\Vir}(c,n)$ is an irreducible $\BN$-graded $V_{\Vir}^{0}(c,0)$-module.
Furthermore, $L_{\Vir}(c,n)$ for $n\in\BF_p$ form
a complete set of equivalence class representatives of irreducible $\BN$-graded  $V_{\Vir}^{0}(c,0)$-modules.
\end{corollary}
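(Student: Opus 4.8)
The plan is to derive this Corollary as an immediate consequence of the machinery already in place, so that almost all of the work has been done in Theorem \ref{main-Vir} and in Propositions \ref{WtoA(V)module} and \ref{A(V)toV}. The only genuinely new observation is that the algebra $\BF[x]/(x^p-x)\BF[x]$ has exactly $p$ irreducible (necessarily one-dimensional) modules, indexed by the roots $0,1,\ldots,p-1$ of $x^p-x$ in $\BF_p$, and that these correspond precisely to the $V^0_{\Vir}(c,0)$-modules $L_{\Vir}(c,n)$.

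First I would recall from Theorem \ref{main-Vir} that $A(V^0_{\Vir}(c,0))\cong \BF[x]/(x^p-x)\BF[x]$ via $f(x)\mapsto f([\omega])$. Since $x^p-x=\prod_{n\in\BF_p}(x-n)$ splits into distinct linear factors over $\BF_p\subset\BF$, the Chinese Remainder Theorem gives $A(V^0_{\Vir}(c,0))\cong \prod_{n\in\BF_p}\BF$, so its irreducible modules are the $p$ one-dimensional modules $\BF_n$ ($n\in\BF_p$) on which $[\omega]$ acts as the scalar $n$, and these are pairwise inequivalent. Next I would apply Proposition \ref{A(V)toV} to each $\BF_n$: there is an irreducible $\BN$-graded $V^0_{\Vir}(c,0)$-module $L(\BF_n)$ with $L(\BF_n)(0)\cong\BF_n$, on which $L(0)=\omega_1$ acts on the degree-zero part as the scalar $n$. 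Since $L(\BF_n)$ is an irreducible $\BN$-graded module for $V_{\Vir}(c,0)$ as well (it is a quotient of $V_{\Vir}(c,0)$'s module category), it must be of the form $L_{\Vir}(c,\lambda)$ for some $\lambda$; matching the $L(0)$-eigenvalue on the bottom degree forces $\lambda=n$. Thus $L_{\Vir}(c,n)$ is a $V^0_{\Vir}(c,0)$-module for each $n\in\BF_p$.

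Conversely, I would invoke Proposition \ref{WtoA(V)module}: if $W=\bigoplus_{n\in\BN}W(n)$ is any irreducible $\BN$-graded $V^0_{\Vir}(c,0)$-module with $W(0)\ne0$, then $W(0)$ is an $A(V^0_{\Vir}(c,0))$-module, and by irreducibility of $W$ together with the standard fact that $W$ is generated by $W(0)$, the module $W(0)$ is an irreducible $A(V^0_{\Vir}(c,0))$-module, hence one of the $\BF_n$. Running Proposition \ref{A(V)toV} and the usual uniqueness argument (the irreducible $\BN$-graded module with prescribed bottom is determined up to equivalence by its bottom $A(V)$-module) identifies $W$ with $L_{\Vir}(c,n)$. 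Together with the previous paragraph and the pairwise inequivalence of the $\BF_n$ (which transfers to pairwise inequivalence of the $L_{\Vir}(c,n)$, as equivalent graded modules have isomorphic bottom $A(V)$-modules after a degree shift, but the shift is zero here since each $L_{\Vir}(c,n)(0)\ne 0$), this shows $\{L_{\Vir}(c,n)\mid n\in\BF_p\}$ is a complete set of representatives.

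I do not anticipate a serious obstacle; the only point requiring a little care is the bijection between irreducible $\BN$-graded $V$-modules (up to equivalence) and irreducible $A(V)$-modules (up to isomorphism), which is the content of combining Propositions \ref{WtoA(V)module} and \ref{A(V)toV} in the modular setting of \cite{DR1} — one must check that the equivalence relation on graded modules (allowing an integer degree shift) does not collapse distinct $L_{\Vir}(c,n)$, and this is handled by noting all these modules already have nonzero degree-zero component so no nontrivial shift is available. A second minor point is confirming that an $\BN$-graded $V_{\Vir}(c,0)$-module is a $V^0_{\Vir}(c,0)$-module exactly when the ideal $I_0$ acts trivially, which by Lemma \ref{th:L-n^pomega} (or directly from the defining relation $L_{-2}^p\1\mapsto \omega_1^p-\omega_1$ on the bottom) reduces to the condition $[\omega]^p-[\omega]=0$ on $W(0)$, i.e. $\lambda^p=\lambda$, i.e. $\lambda\in\BF_p$.
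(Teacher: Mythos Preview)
Your proposal is correct and follows essentially the same approach as the paper, which simply states that the corollary follows ``immediately'' from combining Propositions \ref{WtoA(V)module} and \ref{A(V)toV} with Theorem \ref{main-Vir} (after having noted just before the corollary that every irreducible $\BN$-graded $V_{\Vir}(c,0)$-module is of the form $L_{\Vir}(c,\lambda)$). You have merely made explicit the structure of $\BF[x]/(x^p-x)$ and the bookkeeping about degree shifts that the paper leaves to the reader.
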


On the other hand, we have:

\begin{proposition}
For any $c\in \BF$, vertex algebra $V_{\Vir}^{0}(c,0)$ is $C_{2}$-cofinite.
\end{proposition}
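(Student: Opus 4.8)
The plan is to pass to the ambient vertex algebra $V_{\Vir}(c,0)$ and exploit the fact that $L_{-2}^p\1$ lies in the defining ideal $I_0$. First I would record the elementary general fact that if $I$ is an ideal of a vertex algebra $V$, then $C_2(V/I)=(C_2(V)+I)/I$, so that $(V/I)/C_2(V/I)\cong V/(C_2(V)+I)$ as commutative associative algebras. Applying this with $V=V_{\Vir}(c,0)$ and $I=I_0$, it suffices to prove that $V_{\Vir}(c,0)/(C_2(V_{\Vir}(c,0))+I_0)$ is finite-dimensional.

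Next I would produce a spanning set adapted to $\omega$. Writing $Y(\omega,x)=\sum_{m\in\BZ}\omega_m x^{-m-1}$, so that $\omega_m=L(m-1)$, we have $L(-n)=\omega_{1-n}$ for all $n$; in particular, for $n\ge 2$ the operator $L(-n)$ equals $\omega_{-(n-1)}$ with $n-1\ge 1$. Combined with the P-B-W description $V_{\Vir}(c,0)_{(n)}=\spanf\{L(-n_1)\cdots L(-n_r)\1\mid n_i\ge 2\}$, this shows that $V_{\Vir}(c,0)$ is spanned by the vectors $\omega_{-m_1}\cdots\omega_{-m_r}\1$ with $r\ge 0$ and all $m_i\ge 1$. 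Hence Lemma~\ref{symmetric-algebra}, applied with $U=\BF\omega$ (so that $S(U)\cong\BF[x]$), yields a surjective algebra homomorphism $\pi\colon\BF[x]\twoheadrightarrow V_{\Vir}(c,0)/C_2(V_{\Vir}(c,0))$ with $\pi(x)=\omega+C_2(V_{\Vir}(c,0))$.

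Finally I would compute the image of $x^p$. In the commutative associative algebra $V_{\Vir}(c,0)/C_2(V_{\Vir}(c,0))$ the product is induced by $u\cdot v=u_{-1}v$, so $\pi(x^p)=\overline{\omega_{-1}^p\1}=\overline{L(-2)^p\1}$. By Proposition~\ref{th:ideal-Vir} (with $\mu=0$), $L(-2)^p\1\in I_0$, so composing $\pi$ with the projection onto $V_{\Vir}(c,0)/(C_2(V_{\Vir}(c,0))+I_0)$ sends $x^p$ to $0$; the induced surjection therefore factors through $\BF[x]/(x^p)\BF[x]$, which is $p$-dimensional. Consequently $\dim V_{\Vir}^{0}(c,0)/C_2(V_{\Vir}^{0}(c,0))\le p<\infty$, as desired. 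The only work beyond citing earlier results is the two bookkeeping verifications, namely $C_2(V/I)=(C_2(V)+I)/I$ and $\omega_{-1}^p\1\equiv\omega^p$ modulo $C_2(V_{\Vir}(c,0))$; both are routine and I do not anticipate any genuine obstacle.
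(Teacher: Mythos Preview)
Your proposal is correct and follows essentially the same approach as the paper: both use the PBW spanning set together with Lemma~\ref{symmetric-algebra} to see that $V_{\Vir}^{0}(c,0)/C_2(V_{\Vir}^{0}(c,0))$ is generated by $\bar\omega$, and then kill $\bar\omega^{\,p}$ using $L(-2)^p\1\in I_0$. The only cosmetic difference is that the paper applies Lemma~\ref{symmetric-algebra} directly to the quotient $V_{\Vir}^{0}(c,0)$ (the spanning hypothesis descends from $V_{\Vir}(c,0)$), avoiding your detour through the identity $C_2(V/I)=(C_2(V)+I)/I$.
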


\begin{proof} Notice that $\omega_{n}=L(n-1)$ for $n\in \BZ$. In view of the P-B-W theorem, we have
\begin{equation*}
    V_{\Vir}^{0}(c,0)={\rm span}\{ \omega_{-n_1}\cdots \omega_{-n_r}\1\mid  r\ge 0,\ n_i\ge 1\}.
\end{equation*}
From Lemma \ref{symmetric-algebra},  the commutative associative algebra $V_{\Vir}^{0}(c,0)/C_{2}(V_{\Vir}^{0}(c,0))$
is generated by $\bar{\omega}$.
As $L(-2)^{p}\1=0$ in $V_{\Vir}^{0}(c,0)$, we have $\omega_{-1}^{p}\1=0$. Thus $\bar{\omega}^{p}=0$ in
$V_{\Vir}^{0}(c,0)/C_2(V_{\Vir}^{0}(c,0) )$. It follows immediately that it is finite dimensional.
\end{proof}

\section{Affine vertex algebras $V_{\hat\fg}(\ell,0)$ and their quotients $V^{\chi}_{\hat\fg}(\ell,0)$}
In this section, we study affine vertex algebras, which are vertex algebras associated to affine Lie algebras.
In particular, we study the quotients $V^{\chi}_{\hat\fg}(\ell,0)$ of
the universal affine vertex algebra $V_{\hat\fg}(\ell,0)$ by ideals associated to the $p$-center of
the affine Lie algebra $\hat{\fg}$.

Let $\fg$ be a Lie algebra over $\BF$ with a
symmetric invariant bilinear form $\langle\cdot,\cdot\rangle$. Recall the associated affine Lie algebra
$\hat{\fg}$, where
\begin{equation*}
    \hat{\fg}=\fg\otimes \BF[t,t^{-1}] +\BF \bk
\end{equation*}
as a vector space and the Lie bracket is given by $[\hat{\fg},\bk]=0$ and
\begin{equation*}
    [a\otimes t^{m},b\otimes t^{n}]=[a,b]\otimes t^{m+n}+\langle a,b\rangle \delta_{m+n,0}\bk
\end{equation*}
for $a,b\in \fg,\  m,n\in \BZ$.

As a tradition, for $a\in \fg,\ n\in \BZ$, let $a(n)$ denote the corresponding operator
on $\hat{\fg}$-modules. In practice, $a\otimes t^{n}$ is also denoted by $a(n)$.
For $a\in \fg$, set
\begin{equation*}
    a(x)=\sum_{n\in \BZ}a(n)x^{-n-1}\in \hat{\fg}[[x,x^{-1}]].
\end{equation*}
We also consider $a(x)$ as an element of $(\End W)[[x,x^{-1}]]$ for any $\hat{\fg}$-module $W$.
We call a $\hat\fg$-module $W$ a {\em locally truncated module} if for every $w\in W$ and $a\in\fg$,
$a(n)w=0$ for $n$ sufficiently large.
On the other hand, if $\bk$ acts as a scalar $\ell\in\BF$ on a $\hat\fg$-module $W$,
we say $W$ is of {\em level} $\ell$.

Let $\ell\in \BF$. Denote by $\BF_{\ell}$ the $1$-dimensional $(\fg[t]\oplus \BF \bk)$-module $\BF$, on which
$\fg[t]$ acts trivially and $\bk$ acts as scalar $\ell$. Then form an induced
 $\hat{\fg}$-module
\begin{equation}
V_{\hat\fg}(\ell,0)=U(\hat{\fg})\otimes_{U(\fg[t]+\BF \bk)} \BF_{\ell},
\end{equation}
which is commonly referred to as the {\em level $\ell$ universal vacuum module} of $\hat\fg$.

Set ${\bf 1}=1\otimes 1\in V_{\hat\fg}(\ell,0)$.
Identify $\fg$ as a subspace of $V_{\hat\fg}(\ell,0)$ through the linear map
\begin{equation*}
    \fg\rightarrow V_{\hat\fg}(\ell,0); \  \   a\mapsto a(-1){\bf 1}.
\end{equation*}

Just as in the case of characteristic $0$, we have (a detailed proof can be found in \cite{LM}):

\begin{proposition}\label{th:affineVA}
For any $\ell\in \BF$, there exists a vertex algebra structure
on $V_{\hat\fg}(\ell,0)$, which is uniquely determined by the condition that $\1=1\otimes 1$ is the vacuum vector and
\begin{equation}
    Y(a,x)=a(x)\in(\End V_{\hat\fg}(\ell,0))[[x,x^{-1}]]\quad \text{for }a\in\fg.
\end{equation}
\end{proposition}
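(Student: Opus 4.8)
The plan is to invoke the general existence theorem for vertex algebra structures on modules for local Lie algebras, exactly as was done for the Virasoro case via \cite[Theorem~2.14]{LM}. First I would record the two structural ingredients that make that theorem applicable. The first is \emph{locality}: from the affine relations $[a(m),b(n)]=[a,b](m+n)+\langle a,b\rangle\delta_{m+n,0}\bk$ one rewrites the bracket of generating functions as
\begin{equation*}
[a(x_1),b(x_2)]=[a,b](x_2)\,x_2^{-1}\delta\!\left(\frac{x_1}{x_2}\right)+\langle a,b\rangle\,\bk\,\partial_{x_1}^{(1)}\!\left(x_2^{-1}\delta\!\left(\frac{x_1}{x_2}\right)\right),
\end{equation*}
which gives $(x_1-x_2)^2[a(x_1),b(x_2)]=0$ on $V_{\hat\fg}(\ell,0)$. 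The second is the $\B$-compatibility: one checks that $\fg$ (equivalently $\fg\otimes\BF[t,t^{-1}]\oplus\BF\bk$) is a $\B$-module Lie algebra with $\D^{(k)}(a(n))=(-1)^k\binom{n}{k}a(n-k)$ and $\D^{(k)}\bk=\delta_{k,0}\bk$, so that $U(\hat\fg)$ is a $\B$-module algebra and $V_{\hat\fg}(\ell,0)\cong U(\hat\fg)/U(\hat\fg)(\fg[t]\oplus\BF(\bk-\ell))$ is a $(U(\hat\fg),\B)$-module (the subspace being a $\B$-submodule). This yields the conjugation formula
\begin{equation*}
e^{z\D}a(x)e^{-z\D}=a(x+z)
\end{equation*}
on $V_{\hat\fg}(\ell,0)$, by \cite[Lemma~3.2]{LM}.

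Next I would feed locality and the conjugation formula into \cite[Theorem~2.14]{LM} to produce a vertex algebra structure on $V_{\hat\fg}(\ell,0)$ with vacuum $\1=1\otimes 1$ and $Y(a,x)=a(x)$ for $a\in\fg$; this is the existence half of the Proposition, strictly parallel to the Virasoro Proposition in Section~3. For uniqueness, observe that since $V_{\hat\fg}(\ell,0)$ is generated as a $U(\hat\fg)$-module by $\1$, and the components $a(n)$ for $a\in\fg$, $n\in\BZ$ generate $\hat\fg$, the prescribed values $Y(a,x)=a(x)$ together with the vacuum property and the iterate formula $Y(a_{-1}^{(1)}v,x)\cdots$ (more precisely, the weak associativity / $n$-th product reconstruction for vertex algebras) force the value of $Y(u,x)$ on an arbitrary $u\in V_{\hat\fg}(\ell,0)$. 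Concretely, every vector is a linear combination of $a^{(1)}(-n_1)\cdots a^{(r)}(-n_r)\1$ with $n_i\ge 1$, so repeated use of $Y(u_{-1}v,x)=\,:Y(u,x)Y(v,x):$ (normal ordering, valid here as in characteristic $0$ for mutually local fields) determines $Y$ uniquely.

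The main obstacle, such as it is, is purely a matter of verifying that the cited machinery of \cite{LM} applies verbatim in characteristic $p>2$ with no hidden denominators: one must make sure the passage from the bracket relations to the $\delta$-function form uses only Hasse derivatives $\partial_x^{(k)}$ (hence stays in $\BZ$-coefficients, well defined over $\BF$), and that the reconstruction theorem invoked is the prime-characteristic version from \cite{LM} rather than the characteristic-zero one. I expect no genuine difficulty here, because the affine relations are visibly integral and the degree of locality is $2$, smaller than the Virasoro case's $4$; the only care needed is to phrase everything in terms of $\D^{(n)}$ and $\partial_x^{(n)}$ and to cite \cite[Theorem~2.14]{LM} and \cite[Lemma~3.2]{LM} in place of their classical analogues. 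I would close by remarking that this is completely analogous to the construction of $V_{\Vir}(c,0)$ given earlier, so the write-up can be kept brief.
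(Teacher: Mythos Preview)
Your proposal is correct and matches the paper's approach: the paper does not give an independent proof but simply states that ``a detailed proof can be found in \cite{LM}'' and that the construction is ``just as in the case of characteristic $0$,'' which is exactly the route you outline---verify locality $(x_1-x_2)^2[a(x_1),b(x_2)]=0$, establish the $(U(\hat\fg),\B)$-module structure and the conjugation formula $e^{z\D}a(x)e^{-z\D}=a(x+z)$, then apply \cite[Theorem~2.14]{LM}, strictly parallel to the Virasoro construction in Section~3. Your treatment of uniqueness via the generating set $\fg$ and iterated normal ordering is more explicit than the paper's, but this is standard and raises no issues.
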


Note that for $a\in \fg\subset V_{\hat\fg}(\ell,0),\ n\in \BZ$, we have
\begin{equation}
a_{n}=a(n)\   \   \  \mbox{ on } V_{\hat\fg}(\ell,0).
\end{equation}
It follows that $\fg$ generates $V_{\hat{\fg}}(\ell,0)$ as a vertex algebra and a left ideal of $V_{\hat{\fg}}(\ell,0)$
exactly amounts to a $\hat{\fg}$-submodule.

We make $\hat\fg$ a $\BZ$-graded Lie algebra by defining
\begin{equation*}
    \deg \bk=0,\   \   \   \   \deg (\fg\otimes t^{n})=-n\   \   \mbox{ for }n\in \BZ.
\end{equation*}
Similar to the Virasoro vertex algebra $V_{\Vir}(c,0)$,
$V_{\hat{\fg}}(\ell,0)$ is naturally a $\BZ$-graded $\hat\fg$-module. We fix this $\BZ$-grading of $V_{\hat{\fg}}(\ell,0)$:
\begin{equation}\label{Zgrading-affine}
V_{\hat{\fg}}(\ell,0)=\bigoplus_{n\in \BZ}V_{\hat{\fg}}(\ell,0)_{(n)},
\end{equation}
where
$V_{\hat{\fg}}(\ell,0)_{(n)}=0$ for $n<0$,  $V_{\hat{\fg}}(\ell,0)_{(0)}=\BF \1$, and
$V_{\hat{\fg}}(\ell,0)_{(1)}=\fg$. For $a\in \fg$, as
\begin{equation*}
    Y(a,x)=\sum_{n\in \BZ}a_{n}x^{-n-1}\   \   \mbox{with }a_{n}=a(n)
\end{equation*}
we have
\begin{equation}
a_{m}V_{\hat{\fg}}(\ell,0)_{(n)}\subset V_{\hat{\fg}}(\ell,0)_{(n-m)} \    \   \mbox{ for }m,n\in \BZ.
\end{equation}
As $\fg$ generates $V_{\hat{\fg}}(\ell,0)$ as a vertex algebra,
using induction we readily have:

\begin{proposition}
For any $\ell\in \BF$, vertex algebra $V_{\hat{\fg}}(\ell,0)$ equipped with the given $\BZ$-grading
is a $\BZ$-graded vertex algebra.
\end{proposition}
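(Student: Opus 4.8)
The plan is to bootstrap the grading compatibility from the generating subspace $\fg=V_{\hat{\fg}}(\ell,0)_{(1)}$ to all of $V_{\hat{\fg}}(\ell,0)$. By the definition of the grading we already have $\1\in V_{\hat{\fg}}(\ell,0)_{(0)}$, so it remains only to verify that $u_kV_{\hat{\fg}}(\ell,0)_{(n)}\subseteq V_{\hat{\fg}}(\ell,0)_{(m+n-k-1)}$ for every $\BZ$-homogeneous $u$ of degree $m$ and all $k,n\in\BZ$. For $u=a\in\fg$ this is exactly the relation $a_mV_{\hat{\fg}}(\ell,0)_{(n)}\subseteq V_{\hat{\fg}}(\ell,0)_{(n-m)}$ recorded just above, together with $\deg a=1$; and for $u=\1$ it is immediate from $\1_k=\delta_{k,-1}\,\mathrm{id}$. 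Thus the property holds on the generators, and the real issue is to propagate it along products.

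Call a $\BZ$-homogeneous vector $v\in V_{\hat{\fg}}(\ell,0)_{(d)}$ \emph{compatible} if $v_kV_{\hat{\fg}}(\ell,0)_{(n)}\subseteq V_{\hat{\fg}}(\ell,0)_{(d+n-k-1)}$ for all $k,n\in\BZ$; for each fixed $d$ the compatible vectors of degree $d$ form a subspace of $V_{\hat{\fg}}(\ell,0)_{(d)}$. The crucial step is the closure property: if $a\in\fg$ and $v$ is compatible of degree $d$, then $a_jv$ is compatible (and homogeneous of degree $d-j$) for every $j\in\BZ$. To see this I would invoke the standard iterate formula, a consequence of the Jacobi identity \eqref{Jacobi-va},
\begin{equation*}
(a_jv)_k=\sum_{i\ge0}(-1)^i\binom{j}{i}a_{j-i}v_{k+i}-\sum_{i\ge0}(-1)^{i+j}\binom{j}{i}v_{j+k-i}a_i,
\end{equation*}
which is a well-defined identity of operators once applied to a vector, since then each sum truncates. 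Applying it to $V_{\hat{\fg}}(\ell,0)_{(n)}$ and using compatibility of $v$ (degree $d$) and of $a$ (degree $1$): each term of the first sum sends $V_{\hat{\fg}}(\ell,0)_{(n)}$ into $a_{j-i}V_{\hat{\fg}}(\ell,0)_{(d+n-k-i-1)}\subseteq V_{\hat{\fg}}(\ell,0)_{(d+n-k-j-1)}$, and each term of the second sum sends it into $v_{j+k-i}V_{\hat{\fg}}(\ell,0)_{(n-i)}\subseteq V_{\hat{\fg}}(\ell,0)_{(d+n-k-j-1)}$; since $\deg(a_jv)=1+d-j-1=d-j$, this says precisely that $a_jv$ is compatible.

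To conclude, recall that $V_{\hat{\fg}}(\ell,0)=U(\hat{\fg})\1$ with $a(n)$ acting as $a_n$, so $V_{\hat{\fg}}(\ell,0)$ is spanned by the homogeneous vectors $a^{(1)}_{j_1}\cdots a^{(r)}_{j_r}\1$ with $r\ge0$, $a^{(i)}\in\fg$, $j_i\in\BZ$. Since $\1$ is compatible, induction on $r$ using the closure property shows that every such spanning vector is compatible. As the compatible vectors of each fixed degree form a subspace and these spanning vectors exhaust $V_{\hat{\fg}}(\ell,0)=\bigoplus_{d}V_{\hat{\fg}}(\ell,0)_{(d)}$, we obtain that every vector of $V_{\hat{\fg}}(\ell,0)_{(d)}$ is compatible for each $d$; together with $\1\in V_{\hat{\fg}}(\ell,0)_{(0)}$ this is exactly the asserted $\BZ$-graded vertex algebra structure.

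I do not expect a genuine obstacle here: the content is purely formal, and the only mild care needed lies in quoting the iterate formula correctly and in keeping the index bookkeeping consistent. (Alternatively, one may introduce the degree operator $D$ acting as the scalar $d$ on $V_{\hat{\fg}}(\ell,0)_{(d)}$ and verify the single identity $[D,Y(v,x)]=Y(Dv,x)+x\,\partial_x^{(1)}Y(v,x)$ on the generators $a\in\fg$, then extend it to all of $V_{\hat{\fg}}(\ell,0)$ by the usual generating-subspace argument; this is the same computation in different clothing.)
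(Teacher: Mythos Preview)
Your proposal is correct and is precisely a detailed execution of the paper's own argument, which simply says ``As $\fg$ generates $V_{\hat{\fg}}(\ell,0)$ as a vertex algebra, using induction we readily have'' the result. Your use of the iterate formula to propagate compatibility from $\fg$ through products $a_jv$ is exactly the induction the paper leaves implicit.
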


Recall the following result from \cite{LM}:

\begin{lemma}\label{th:Dkp}
For any $a\in\fg$, $k\in\BN$ and $n\in\BZ_+$, the following hold in $V_{\hat\fg}(\ell,0)$:
\begin{align*}
    \D^{(kp)}(a(-n)^p\1)&=\binom{n+k-1}{k}a(-n-k)^p\1,\\
    \D^{(k)}(a(-n)^p\1)&=0\text{ if }p\nmid k.
\end{align*}
\end{lemma}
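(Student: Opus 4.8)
The plan is to deduce this directly from Corollary~\ref{th:a_n^p}, applied with the vertex algebra $V=V_{\hat\fg}(\ell,0)$ itself playing the role of the module $W$ and with $a\in\fg\subset V_{\hat\fg}(\ell,0)$. First I would check the hypothesis of that corollary, namely that $a_ia\in\BF\1$ for all $i\in\BN$. Since $a_i=a(i)$ on $V_{\hat\fg}(\ell,0)$ and $a=a(-1)\1$, the affine bracket together with $[a,a]=0$ gives
\begin{equation*}
a_ia=a(i)a(-1)\1=[a,a](i-1)\1+\langle a,a\rangle\delta_{i-1,0}\ell\1=\delta_{i,1}\,\ell\langle a,a\rangle\1\in\BF\1 ,
\end{equation*}
so Corollary~\ref{th:a_n^p} applies and yields, in $(\End V_{\hat\fg}(\ell,0))[[x,x^{-1}]]$,
\begin{equation*}
Y\!\left((a(-n))^p\1,x\right)=\sum_{j\ge0}\binom{n+j-1}{j}(a(-n-j))^p\,x^{jp}
+\sum_{j\ge0}(-1)^{n-1}\binom{n+j-1}{j}(a(j))^p\,x^{(-n-j)p}.
\end{equation*}

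Next I would apply both sides of this identity to the vacuum vector $\1$. On the left, the creation property gives $Y((a(-n))^p\1,x)\1=e^{x\D}\big((a(-n))^p\1\big)=\sum_{m\ge0}x^{m}\,\D^{(m)}\big((a(-n))^p\1\big)$, using \eqref{Dn-vacuum}. On the right, for every $j\ge0$ we have $a(j)\1=0$ because $\fg\otimes\BF[t]$ annihilates $\1$, hence $(a(j))^p\1=0$ and the entire second sum vanishes; the first sum becomes $\sum_{j\ge0}\binom{n+j-1}{j}(a(-n-j))^p\1\cdot x^{jp}$, which lies in $V_{\hat\fg}(\ell,0)[[x]]$ as it must. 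Comparing coefficients of $x^{m}$ on the two sides then gives $\D^{(kp)}\big((a(-n))^p\1\big)=\binom{n+k-1}{k}(a(-n-k))^p\1$ when $m=kp$, and $\D^{(m)}\big((a(-n))^p\1\big)=0$ when $p\nmid m$, which are exactly the two claimed formulas (after renaming $j=k$).

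This argument is essentially a bookkeeping exercise once Corollary~\ref{th:a_n^p} is in hand, so there is no serious obstacle; the only points demanding care are the verification that the second summation genuinely collapses after evaluation at $\1$ (so that the would-be negative powers of $x$ disappear and coefficient comparison is legitimate) and the consistency check that the surviving series involves only nonnegative powers of $x$, matching $e^{x\D}\big((a(-n))^p\1\big)\in V_{\hat\fg}(\ell,0)[[x]]$.
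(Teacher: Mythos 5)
Your proof is correct. Note that the paper does not actually prove this lemma --- it simply cites it from \cite{LM} --- so there is no in-paper argument to compare against; but your derivation is a clean, self-contained one using only tools the paper itself sets up. The key steps all check out: the hypothesis $a_ia\in\BF\1$ of Corollary \ref{th:a_n^p} holds since $a_ia=[a(i),a(-1)]\1$ collapses to $\delta_{i,1}\ell\langle a,a\rangle\1$ (the commutator with $a(-1)$ kills the $[a,a]$ term and the central term survives only at $i=1$, and $a(-1)a(i)\1=0$ for $i\ge0$); taking $W$ to be the adjoint module is legitimate; evaluating at $\1$ turns the left side into $\sum_{m\ge0}\D^{(m)}(a(-n)^p\1)x^m$ via \eqref{Dn-vacuum}; and the second sum on the right dies because $a(j)\1=0$ for $j\ge0$. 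Since the surviving exponents $jp$, $j\ge0$, are distinct and nonnegative, coefficient comparison immediately yields both claimed identities. The one cosmetic slip is the central term in your bracket computation, which with the standard affine relation carries a factor of $i$; this is harmless here since $\delta_{i,1}$ forces $i=1$.
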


We have the following technical result on binomial coefficients:

\begin{lemma}\label{Tresults}
\begin{enumerate}
\item[{\rm(i)}] $\binom{pn+k-1}{k}\equiv 0 \pmod{p}$ for $n,k\in\BZ_+$ with $p\nmid k$.
\item[{\rm(ii)}] $\binom{pn+pk-1}{pk}\equiv \binom{n+k-1}{k} \pmod{p}$ for $n,k\in\BZ_+$.
\end{enumerate}
\end{lemma}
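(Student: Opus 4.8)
The plan is to prove both congruences by a uniform combinatorial approach using Lucas' theorem on binomial coefficients modulo $p$, which is the cleanest route since both statements concern $\binom{N}{M} \bmod p$ for explicit $N, M$. Recall that Lucas' theorem expresses $\binom{N}{M} \bmod p$ as the product $\prod_i \binom{N_i}{M_i} \bmod p$, where $N = \sum_i N_i p^i$ and $M = \sum_i M_i p^i$ are the base-$p$ expansions; and a single factor $\binom{N_i}{M_i}$ vanishes mod $p$ precisely when $M_i > N_i$. So the task reduces to comparing base-$p$ digits.

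For part (i), I would write $k = p q + s$ with $0 \le s < p$; since $p \nmid k$ we have $1 \le s \le p-1$, so the lowest base-$p$ digit of $k$ is $s$. Now consider $pn + k - 1 = p(n+q) + (s-1)$, whose lowest base-$p$ digit is $s - 1$ (note $0 \le s-1 \le p-2 < p$, so there is no borrow). Comparing lowest digits: the digit of $k$ is $s$ and the digit of $pn+k-1$ is $s-1 < s$. By Lucas' theorem the corresponding factor $\binom{s-1}{s}$ is zero, hence $\binom{pn+k-1}{k} \equiv 0 \pmod p$. An alternative, perhaps even shorter, is to use the identity $k\binom{pn+k-1}{k} = pn\binom{pn+k-1}{k-1}$ (from $k\binom{m}{k} = m\binom{m-1}{k-1}$ with $m = pn+k-1$, noting $m - (k-1) = pn$): the right side is divisible by $p$, and since $p \nmid k$, we conclude $p \mid \binom{pn+k-1}{k}$; this avoids invoking Lucas entirely and is my preferred write-up for (i).

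For part (ii), I would again prefer an elementary identity over Lucas. Write $m = pn + pk - 1$. Then $\binom{pn+pk-1}{pk}$ and we want it $\equiv \binom{n+k-1}{k}$. One approach: use the standard congruence $\binom{pa}{pb} \equiv \binom{a}{b} \pmod p$ (a special case of Lucas, valid since multiplying indices by $p$ just shifts all base-$p$ digits up by one position), applied to $\binom{p(n+k-1) + (p-1)}{pk}$. Here the numerator is $p(n+k-1)$ plus the digit $p-1$, and $pk$ has lowest digit $0$; by Lucas the lowest-digit factor is $\binom{p-1}{0} = 1$, and the remaining factors give exactly $\binom{n+k-1}{k} \bmod p$. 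So $\binom{pn+pk-1}{pk} \equiv \binom{n+k-1}{k} \pmod p$ directly. If the paper prefers to stay self-contained without citing Lucas, I would instead derive it from the polynomial identity $(1+x)^p \equiv 1 + x^p \pmod p$ in $\BF[x]$: expanding $(1+x)^{pn+pk-1} = (1+x)^{p(n+k-1)}(1+x)^{p-1} \equiv (1+x^p)^{n+k-1}(1+x)^{p-1} \pmod p$ and extracting the coefficient of $x^{pk}$, one sees that $x^{pk}$ can only come from the term $\binom{n+k-1}{k} x^{pk}$ of the first factor times the constant term $1$ of the second (since the exponents available from $(1+x)^{p-1}$ are $0,1,\dots,p-1$, none a positive multiple of $p$), yielding the claim.

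I do not expect a serious obstacle here; the only mild care needed is bookkeeping of base-$p$ digits (ensuring no borrows occur when passing from $k$ to $k-1$ inside the relevant ranges, which is where the hypotheses $p \nmid k$ in (i) and $k \in \BZ_+$ throughout are used) and being explicit about which version of the generating-function/Lucas argument the paper wants to rely on. Since the excerpt's Appendix is advertised as proving "a simple fact on binomial coefficients," I would keep the exposition short, most likely using the two elementary identities $k\binom{m}{k} = (m-k+1)\binom{m}{k-1}$ for (i) and the $(1+x)^p \equiv 1+x^p$ trick for (ii), so that no external theorem beyond Fermat's little theorem is invoked.
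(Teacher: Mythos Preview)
Your proposal is correct. The Lucas-theorem route you outline for both parts is exactly what the paper does: for (i) it compares the lowest base-$p$ digit $k_0$ of $k$ with the lowest digit $k_0-1$ of $pn+k-1$ to get a vanishing factor $\binom{k_0-1}{k_0}$; for (ii) it writes $pn+pk-1=p(n+k-1)+(p-1)$ so the digit-$0$ factor is $\binom{p-1}{0}=1$ and the shifted digits reproduce $\binom{n+k-1}{k}$ via Lucas.

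Your alternative arguments---the identity $k\binom{pn+k-1}{k}=pn\binom{pn+k-1}{k-1}$ for (i) and the generating-function computation with $(1+x)^p\equiv 1+x^p$ for (ii)---are valid and genuinely more elementary, avoiding any appeal to Lucas' theorem; the paper does not take this route. One small slip: in your parenthetical for (i) you cite ``$k\binom{m}{k}=m\binom{m-1}{k-1}$,'' but the identity actually used (and correctly stated later in your write-up) is $k\binom{m}{k}=(m-k+1)\binom{m}{k-1}$, which with $m=pn+k-1$ gives $m-k+1=pn$ as you need.
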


\begin{proof}
For each nonnegative integer $m$, write $m=\sum_{i\ge0}m_ip^i$,
where $m_i\in\{0,1,\ldots,p-1\}$.
For (i), as $p\nmid k$,
we have $(pn+k-1)_0=k_0-1< k_0$, so that
$\binom{(pn+k-1)_0}{k_0}=0$. Then
$\binom{pn+k-1}{k}\equiv 0 \pmod{p}$ by Lucas' theorem.
For (ii), note that $pm-1=p(m-1)+p-1$, which implies  $(pm-1)_{0}=p-1$
and $(pm-1)_{i}=(m-1)_{i-1}$ for $i\ge 1$.
Then, using Lucas' theorem again we get
\begin{align*}
\binom{pn+pk-1}{pk}&\equiv
\prod_{i\ge0}\binom{(pn+pk-1)_i}{(pk)_i}
\equiv \binom{p-1}{0}\prod_{i\ge1}\binom{(pn+pk-1)_i}{(pk)_i}\\
&\equiv \prod_{i\ge0}\binom{(n+k-1)_i}{k_i}
\equiv \binom{n+k-1}{k} \pmod{p},
\end{align*}
as desired.
\end{proof}

The following result can be found in  \cite{Ma}:

\begin{proposition}
Let $\fg$ be a restricted Lie algebra equipped with a symmetric invariant bilinear form $\langle\cdot,\cdot\rangle$.
Then there is a restricted Lie algebra structure on $\hat\fg$,
where the $p$-mapping is given by $a(n)^{[p]}=a^{[p]}(pn)$ for $a\in \fg,\ n\in\BZ$ and  by $\bk^{[p]}=\bk$,
where  $a\mapsto a^{[p]}$ denotes the $p$-mapping of $\fg$.
\end{proposition}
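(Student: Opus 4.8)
The plan is to apply the existence criterion Theorem~\ref{p-mapping-sf}. Fix a basis $\{a_i\}_{i\in I}$ of $\fg$; then $\{a_i(n)\mid i\in I,\ n\in\BZ\}\cup\{\bk\}$ is a basis of $\hat\fg$, and it is enough to check
\[
(\ad a_i(n))^{p}=\ad\big(a_i^{[p]}(pn)\big)\quad(i\in I,\ n\in\BZ)\qquad\text{and}\qquad(\ad\bk)^{p}=\ad\bk .
\]
By Theorem~\ref{p-mapping-sf} this yields a restricted Lie algebra structure on $\hat\fg$ with $a_i(n)^{[p]}=a_i^{[p]}(pn)$ and $\bk^{[p]}=\bk$; afterwards I would promote the formula $a(n)^{[p]}=a^{[p]}(pn)$ from basis vectors to arbitrary $a\in\fg$.

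The central element is trivial: $\bk$ is central, so $\ad\bk=0$ and $(\ad\bk)^{p}=0=\ad\bk$. For the main identity fix $a\in\fg$ and $n\in\BZ$. Iterating the bracket $[a(n),b(m)]=[a,b](n+m)+n\langle a,b\rangle\delta_{n+m,0}\bk$ of $\hat\fg$, an induction on $k$ shows that for all $b\in\fg$, $m\in\BZ$ and all $k\ge 2$,
\[
(\ad a(n))^{k}(b(m))=(\ad a)^{k}(b)\,(kn+m),
\]
with no $\bk$-term. Indeed the first application of $\ad a(n)$ can contribute a term $n\langle a,b\rangle\delta_{n+m,0}\bk$, but it is annihilated at the next step since $\bk$ is central; and at every later step the candidate central term has a factor $\langle a,(\ad a)^{j}(b)\rangle$ with $j\ge 1$, which is $0$ because the form is invariant (so $\langle a,[a,z]\rangle=0$ for all $z\in\fg$). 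Taking $k=p$ (recall $p\ge 3$) and using the axiom $\ad a^{[p]}=(\ad a)^{p}$ of a restricted Lie algebra, we get $(\ad a(n))^{p}(b(m))=[a^{[p]},b](pn+m)$ and $(\ad a(n))^{p}(\bk)=0$. On the other hand $\ad(a^{[p]}(pn))$ sends $b(m)$ to $[a^{[p]},b](pn+m)+pn\langle a^{[p]},b\rangle\delta_{pn+m,0}\bk$ and $\bk$ to $0$; since $pn=0$ in $\BF$, the central term vanishes and the two operators agree. (This in fact holds for every $a\in\fg$, not merely the basis vectors.)

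Finally I would extend $a(n)^{[p]}=a^{[p]}(pn)$ to all $a\in\fg$. The set of $a\in\fg$ satisfying this for every $n$ contains the basis, and it is closed under scalars by condition~(ii) of the definition of a restricted Lie algebra, as $(\lambda a)(n)^{[p]}=\lambda^{p}a(n)^{[p]}$ and $(\lambda a)^{[p]}=\lambda^{p}a^{[p]}$. Closure under addition reduces, through condition~(iii), to the compatibility
\[
s_i(a(n),b(n))=s_i(a,b)(pn)\qquad(1\le i\le p-1),
\]
where the $s_i$ are the universal Lie polynomials appearing there. Each $s_i$ is a linear combination of $p$-fold iterated brackets of its two arguments, and a $p$-fold iterated bracket of elements $a(n),b(n)$ with the same $n$ lies in $\fg\otimes t^{(\cdot)n}$ with no $\bk$-component: at any bracketing step the coefficient of the prospective central term is a scalar multiple of $n\,\delta_{kn,0}$ for some $k\ge 1$, hence $0$ in $\BF$ whether $n$ is zero or not. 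Consequently such a bracket equals the corresponding iterated bracket computed in $\fg$, now placed in degree $-pn$, giving $s_i(a(n),b(n))=s_i(a,b)(pn)$. Since the good elements form a subspace containing a basis of $\fg$, the formula $a(n)^{[p]}=a^{[p]}(pn)$ holds for all $a\in\fg$.

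The one real obstacle is the bookkeeping of the central element $\bk$, which enters both in the computation of $(\ad a(n))^{p}$ and in the additivity step; in each place it comes down to the two elementary facts that $\langle\cdot,\cdot\rangle$ is invariant (forcing $\langle a,[a,\cdot]\rangle=0$) and that $p=0$ in $\BF$ (forcing any residual central coefficient, always a multiple of $pn$, to vanish). Once these are in hand the rest is a straightforward transcription of restricted-Lie-algebra identities from $\fg$ to $\hat\fg$.
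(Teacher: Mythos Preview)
Your proof is correct. The paper itself does not supply a proof of this proposition; it simply quotes the result from \cite{Ma}. Your argument is a clean direct verification via the existence criterion (Theorem~\ref{p-mapping-sf}): the computation of $(\ad a(n))^{p}$ is handled by iterating brackets and using invariance of $\langle\cdot,\cdot\rangle$ to kill the $\langle a,(\ad a)^{j}b\rangle$ terms, and the extension from basis vectors to arbitrary $a\in\fg$ is carried out carefully through the additivity axiom and the observation that $p$-fold iterated brackets of elements $a(n),b(n)$ with the \emph{same} index $n$ acquire no $\bk$-component (the would-be central coefficient being $n\,\delta_{kn,0}$, which vanishes whether $n=0$ or not). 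One small clarification worth making explicit in your write-up: in the additivity step, the integer $kn$ in $\delta_{kn,0}$ is nonzero as an \emph{integer} whenever $n\ne 0$ and $k\ge 1$, regardless of its residue mod $p$; you implicitly use this, and it is what makes the argument go through.
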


For the rest of this section, we assume that {\em $\fg$ is a restricted Lie algebra} (with a fixed $p$-mapping).
Let $\chi\in \fg^*$, a linear functional. Denote by $J_\chi$ the $\hat\fg$-submodule of $V_{\hat\fg}(\ell,0)$
generated by vectors
\begin{equation}
    \big(a(-m)^p-a^{[p]}(-pm)-\delta_{m,1}\chi(a)^p\big)\1
\end{equation}
for $a\in \fg,\ m\in\BZ_+$. We have:

\begin{proposition}\label{Jchi}
The $\hat\fg$-submodule $J_\chi$ is an ideal of vertex algebra $V_{\hat\fg}(\ell,0)$, which equals the ideal
generated by $\big(a(-1)^p-a^{[p]}(-p)-\chi(a)^p\big)\1$ for $a\in \fg$.
\end{proposition}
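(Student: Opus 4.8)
The plan rests on the same observation used for the Virasoro algebra: since $\fg$ generates $V_{\hat\fg}(\ell,0)$ as a vertex algebra, a left ideal of $V_{\hat\fg}(\ell,0)$ is exactly a $\hat\fg$-submodule, and (by skew-symmetry, as explained in Section~2) an ideal is exactly a $\B$-stable left ideal. So it suffices to prove that $J_\chi$ is $\B$-stable, and I would reduce this to understanding how the Hasse operators $\D^{(k)}$ act on the distinguished generators
\[
w_{a,m}:=\big(a(-m)^p-a^{[p]}(-pm)-\delta_{m,1}\chi(a)^p\big)\1\qquad(a\in\fg,\ m\in\BZ_+).
\]

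The first and central step is the formula
\[
\D^{(k)}w_{a,m}=\begin{cases}\binom{m+j-1}{j}\,w_{a,m+j}&\text{if }k=jp,\ j\in\BN,\\0&\text{if }p\nmid k.\end{cases}
\]
For the summand $a(-m)^p\1$ this is precisely Lemma~\ref{th:Dkp}. For the summand $a^{[p]}(-pm)\1$ I would write it as $\D^{(pm-1)}(a^{[p]})$, using $a^{[p]}=a^{[p]}(-1)\1\in V_{\hat\fg}(\ell,0)_{(1)}$ together with $\D^{(n)}v=v_{-n-1}\1$; the multiplication rule \eqref{eD-multiplication} then gives $\D^{(k)}\D^{(pm-1)}=\binom{k+pm-1}{k}\D^{(k+pm-1)}$, and the two congruences of Lemma~\ref{Tresults} say exactly that the coefficient $\binom{pm+k-1}{k}$ vanishes mod $p$ when $p\nmid k$ and equals $\binom{m+j-1}{j}$ when $k=jp$ --- i.e.\ it matches the coefficient produced on $a(-m)^p\1$. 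The scalar summand $\chi(a)^p\1$ (present only for $m=1$) contributes nothing for $k\ge1$ because $\D^{(k)}\1=\1_{-k-1}\1=\delta_{k,0}\1$. Since $m+j\ge 2$ whenever $j\ge 1$, no $\chi$-term is ever needed on the right-hand side. Consequently $\bar S:=\spanf\{w_{a,m}\mid a\in\fg,\ m\in\BZ_+\}$ is a $\B$-submodule of $V_{\hat\fg}(\ell,0)$.

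Given this, $\B$-stability of $J_\chi$ follows by propagating from the generators. Using $a(n)=a_n$ on $V_{\hat\fg}(\ell,0)$, the $\hat\fg$-submodule $J_\chi$ generated by $\bar S$ is spanned by the vectors $b_1(n_1)\cdots b_r(n_r)w$ with $r\ge0$, $b_i\in\fg$, $n_i\in\BZ$, $w\in\bar S$, and an induction on $r$ --- using the identity $\D^{(k)}(u_n\eta)=\sum_{i=0}^k(-1)^i\binom{n}{i}u_{n-i}\D^{(k-i)}\eta$ (a consequence of \eqref{conjugation}, exactly as in the proof of Lemma~\ref{max-ideal}), the inclusion $\D^{(j)}\bar S\subseteq\bar S\subseteq J_\chi$, and the fact that $J_\chi$ is a left ideal --- shows $\D^{(k)}J_\chi\subseteq J_\chi$ for all $k$. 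Hence $J_\chi$ is an ideal. For the last assertion, let $I'$ be the ideal generated by the elements $w_{a,1}$, $a\in\fg$; then $I'\subseteq J_\chi$ trivially, while specializing the displayed formula to $m=1$ (where $\binom{j}{j}=1$) gives $w_{a,1+j}=\D^{(jp)}w_{a,1}$, so $I'$ --- being an ideal, hence $\B$-stable --- contains every $w_{a,m}$, hence contains $\bar S$ and the $\hat\fg$-submodule $J_\chi$ it generates. Thus $J_\chi=I'$.

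The one genuinely non-formal ingredient, and the step I expect to require the most care, is the coefficient-matching inside the second step: one must verify that $\D^{(k)}$ carries the ``restricted'' piece $a^{[p]}(-pm)\1$ with the very same coefficient as it carries $a(-m)^p\1$, so that the generating set of $J_\chi$ is literally permuted (up to scalars) by $\B$. This is precisely what Lemma~\ref{Tresults} is tailored to supply; everything else is routine bookkeeping.
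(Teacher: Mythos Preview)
Your proposal is correct and follows essentially the same route as the paper: identify left ideals with $\hat\fg$-submodules, reduce to $\B$-stability, and verify the latter via Lemma~\ref{th:Dkp} for the $a(-m)^p\1$ summand together with Lemma~\ref{Tresults} for the matching coefficient on $a^{[p]}(-pm)\1=\D^{(pm-1)}a^{[p]}$. If anything you are more explicit than the paper, which simply asserts ``it then follows that $J_\chi$ is a $\B$-submodule'' after establishing $\B$-stability of the generating set, whereas you spell out the induction on $r$ using the identity $\D^{(k)}(u_n\eta)=\sum_{i=0}^k(-1)^i\binom{n}{i}u_{n-i}\D^{(k-i)}\eta$.
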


\begin{proof}
As $\fg$ generates $V_{\hat\fg}(\ell,0)$ as a vertex algebra, it follows that
$J_\chi$ is a left ideal of $V_{\hat\fg}(\ell,0)$.
Note that $\D^{(k)}\1=\delta_{k,0}\1$ for $k\in\BN$.
By Lemmas~\ref{th:Dkp} and \ref{Tresults}, for $k,n\in\BZ_+$, we have
\begin{align*}
    \D^{(k)}(a(-n)^p\1-a^{[p]}(-pn)\1)&=0 \text{ if } p\nmid k,\\
    \D^{(pk)}(a(-n)^p\1-a^{[p]}(-pn)\1)&=\binom{n+k-1}{k}a(-n-k)^p\1-\binom{pn+pk-1}{pk}a^{[p]}(-pn-pk)\1\\
    &=\binom{n+k-1}{k}(a(-n-k)^p-a^{[p]}(-pn-pk))\1.
\end{align*}
It then follows that $J_\chi$ is a $\B$-submodule of $V_{\hat\fg}(\ell,0)$.
Therefore, $J_\chi$ is an ideal of $V_{\hat\fg}(\ell,0)$.
Furthermore, notice that for $a\in \fg,\ k\in \BZ_{+}$, we have
\begin{equation*}
    \D^{(pk)}(a(-1)^p\1-a^{[p]}(-p)\1-\chi(a)^p \1)=(a(-1-k)^p-a^{[p]}(-p-pk))\1.
\end{equation*}
It then follows that $J_{\chi}$ is the ideal generated by
$\big(a(-1)^p-a^{[p]}(-p)-\chi(a)^p\big)\1$ for $a\in \fg$.
\end{proof}

For $\chi\in \fg^*$, set
\begin{equation}
    V^\chi_{\hat\fg}(\ell,0)=V_{\hat\fg}(\ell,0)/J_\chi,
\end{equation}
a $\hat{\fg}$-module of level $\ell$ and a vertex algebra.
We have the following characterization of $V^{\chi}_{\hat\fg}(\ell,0)$-modules:

\begin{proposition}\label{th:L-module-property}
If $W$ is a $V^{\chi}_{\hat\fg}(\ell,0)$-module, then
$W$ is a locally truncated $\hat\fg$-module of level $\ell$ such that
 $a(n)^p-a^{[p]}(np)-\delta_{n,-1}\chi(a)^p=0$ on $W$ for $a\in \fg,\  n\in\BZ$.
Furthermore, the converse is also true.
\end{proposition}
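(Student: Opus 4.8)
The plan is to reduce both implications to a single computation, namely the value of $Y_W$ on the vector $\big(a(-1)^p-a^{[p]}(-p)-\chi(a)^p\big)\1$, which by Proposition~\ref{Jchi} is an ideal generator of $J_\chi$.

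First suppose $W$ is a $V^{\chi}_{\hat\fg}(\ell,0)$-module. Pulling back along the quotient map $V_{\hat\fg}(\ell,0)\rightarrow V^{\chi}_{\hat\fg}(\ell,0)$ makes $W$ a $V_{\hat\fg}(\ell,0)$-module, and by the standard correspondence between $V_{\hat\fg}(\ell,0)$-modules and locally truncated $\hat\fg$-modules of level $\ell$ (established exactly as in the Virasoro case treated above; see also \cite{LM}), $W$ becomes such a $\hat\fg$-module with $Y_W(a,x)=a(x)$ for $a\in\fg$. Since the generator above lies in $J_\chi$, its image in $V^{\chi}_{\hat\fg}(\ell,0)$ is zero, so $Y_W$ annihilates it. I would then evaluate this $Y_W$ explicitly: for $a\in\fg$ one checks in $V_{\hat\fg}(\ell,0)$ that $a_ia\in\BF\1$ for all $i\in\BN$ (the only nonzero case is $i=1$, where it is $\langle a,a\rangle\ell\1$), so Corollary~\ref{th:a_n^p} applies and gives $Y_W(a(-1)^p\1,x)=\sum_{j\in\BZ}a(j)^p\,x^{(-j-1)p}$. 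Also $a^{[p]}(-p)\1=\D^{(p-1)}a^{[p]}$, so Lemma~\ref{D-module} gives
\[
Y_W\big(a^{[p]}(-p)\1,x\big)=\partial_x^{(p-1)}a^{[p]}(x)=\sum_{n\in\BZ}\binom{-n-1}{p-1}a^{[p]}(n)\,x^{-n-p},
\]
and a short Lucas/Wilson argument (in the spirit of Lemma~\ref{Tresults}) shows $\binom{-n-1}{p-1}=\delta_{p\mid n}$ in $\BF$ for every $n\in\BZ$; combined with $a^{[p]}(pn)=a(n)^{[p]}$ this turns the last sum into $\sum_{n\in\BZ}a^{[p]}(pn)\,x^{(-n-1)p}$. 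Since $(-1)^{p-1}=1$ and $\chi(a)^p$ contributes only at $x^{0}$, subtracting yields
\[
Y_W\big((a(-1)^p-a^{[p]}(-p)-\chi(a)^p)\1,x\big)=\sum_{n\in\BZ}\big(a(n)^p-a^{[p]}(pn)-\delta_{n,-1}\chi(a)^p\big)x^{(-n-1)p},
\]
and extracting coefficients gives $a(n)^p-a^{[p]}(pn)-\delta_{n,-1}\chi(a)^p=0$ on $W$ for all $a\in\fg$, $n\in\BZ$.

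Conversely, let $W$ be a locally truncated $\hat\fg$-module of level $\ell$ satisfying those relations. By the same correspondence, $W$ is a $V_{\hat\fg}(\ell,0)$-module with $Y_W(a,x)=a(x)$. The subspace $N=\{v\in V_{\hat\fg}(\ell,0):Y_W(v,x)=0\}$ is a left ideal by the iterate (associativity) formula for modules and is $\D$-stable by Lemma~\ref{D-module}, hence an ideal. By Proposition~\ref{Jchi}, $J_\chi$ is the ideal generated by the vectors $\big(a(-1)^p-a^{[p]}(-p)-\chi(a)^p\big)\1$, $a\in\fg$, and the displayed identity above together with the hypothesis shows every such vector lies in $N$; hence $J_\chi\subseteq N$, so $W$ descends to a module for $V_{\hat\fg}(\ell,0)/J_\chi=V^{\chi}_{\hat\fg}(\ell,0)$.

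The one genuinely technical point is the congruence $\binom{-n-1}{p-1}=\delta_{p\mid n}$ in $\BF$, needed for all integers $n$, not only $n>0$: in $\BF$ one has $\binom{-n-1}{p-1}=\binom{n+p-1}{p-1}$ (using $(-1)^{p-1}=1$), whose numerator is the product of the $p-1$ consecutive integers $n+1,\dots,n+p-1$; this product is divisible by $p$ exactly when $p\nmid n$ and is $\equiv(p-1)!\equiv-1\pmod p$ when $p\mid n$, so dividing by $(p-1)!$ yields $\delta_{p\mid n}$. Everything else is routine: the identity $a^{[p]}(pn)=a(n)^{[p]}$ is the definition of the $p$-mapping on $\hat\fg$, and the $p$-th-power structure of $Y_W(a(-1)^p\1,x)$ is already furnished by Corollary~\ref{th:a_n^p} (which applies once the trivial relations $a_ia\in\BF\1$ are noted).
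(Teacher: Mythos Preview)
Your proof is correct and follows essentially the same route as the paper: both reduce to computing $Y_W\big((a(-1)^p-a^{[p]}(-p)-\chi(a)^p)\1,x\big)$ via Corollary~\ref{th:a_n^p} for the $p$-th power term and Lemma~\ref{D-module} for the $a^{[p]}(-p)\1$ term, together with the congruence $\binom{-n-1}{p-1}\equiv\delta_{p\mid n}\pmod p$. Your converse, phrased via the annihilator ideal $N$, is a slightly more explicit version of what the paper dismisses as ``clear from Proposition~\ref{Jchi},'' but the content is the same.
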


\begin{proof} Note that a $V^{\chi}_{\hat\fg}(\ell,0)$-module is naturally a $V_{\hat\fg}(\ell,0)$-module.
Recall that a $V_{\hat\fg}(\ell,0)$-module exactly amounts to a locally truncated
$\hat\fg$-module of level $\ell$. Let $W$ be a $V_{\hat\fg}(\ell,0)$-module.
Let $a\in \fg$. Noticing that $a_ia=\delta_{i,1}\langle a,a\rangle \1$ for $i\in \BN$,
by Corollary \ref{th:a_n^p}
we have
\begin{equation}\label{eq:L-module-property02}
    Y_W(a(-1)^p\1,x)
    =\sum_{j\in\BZ} a(j)^p x^{(-j-1)p}.
\end{equation}
On the other hand,
by Lemma~\ref{D-module}, we have
\begin{align*}
Y_W(a^{[p]}(-p)\1,x)
    &=Y_{W}(\D^{(p-1)}a^{[p]},x)
    =\partial^{(p-1)}_{x}Y_{W}(a^{[p]},x)=\sum_{n\in \BZ}\binom{-n-1}{p-1} a^{[p]}(n) x^{-n-p}\\
&=\sum_{n\in \BZ}(-1)^{p-1}\binom{n+p-1}{p-1} a^{[p]}(n) x^{-n-p}.
\end{align*}
Notice that $(-1)^{p-1}=1$ and that $\binom{n+p-1}{p-1}\equiv 0 \pmod{p}$ if $p\nmid n$
and $\binom{n+p-1}{p-1}\equiv 1 \pmod{p}$ if $p\mid n$.
Then
\begin{equation}\label{eq:L-module-property01}
\begin{split}
    Y_W(a^{[p]}(-p)\1,x)
     =\sum_{j\in \BZ}a^{[p]}(pj) x^{-(j+1)p}.
\end{split}
\end{equation}
Combining \eqref{eq:L-module-property02} with \eqref{eq:L-module-property01},
we get
\begin{equation}\label{enew}
Y_{W}((a(-1)^p-a^{[p]}(-p)-\chi(a)^p)\1,x)
=\sum_{j\in\BZ} a(j)^p x^{(-j-1)p}-\sum_{j\in \BZ}a^{[p]}(pj) x^{-(j+1)p}-\chi(a)^p 1_W.
\end{equation}

If $W$ is a $V^{\chi}_{\hat\fg}(\ell,0)$-module,
as $(a(-1)^p-a^{[p]}(-p)-\chi(a)^p)\1\in J_\chi$, the left-hand side of (\ref{enew}) vanishes. Thus
\begin{equation*}
    \sum_{j\in\BZ} a(j)^p x^{(-j-1)p}-\sum_{j\in \BZ}a^{[p]}(pj) x^{-(j+1)p}-\chi(a)^p 1_W=0,
\end{equation*}
which amounts to
$a(j)^p-a^{[p]}(jp)-\delta_{j,-1}\chi(a)^p=0$ on $W$ for all $j\in\BZ$, as desired.
The converse is also clear from Proposition \ref{Jchi}.
\end{proof}

Recall the ideal $J_{0}$ of $V_{\hat\fg}(\ell,0)$ from Proposition \ref{Jchi}.
Notice that $J_{0}$ is a graded  ideal. Then the quotient vertex algebra $V^0_{\hat\fg}(\ell,0)$ is
naturally a $\BZ$-graded vertex algebra with
\begin{equation}
V^{0}_{\hat{\fg}}(\ell,0)_{(n)}=0\   \mbox{ for }n<0, \   \  V^{0}_{\hat{\fg}}(\ell,0)_{(0)}=\BF \1, \mbox{ and }
V^{0}_{\hat{\fg}}(\ell,0)_{(1)}=\fg.
\end{equation}

\section{Zhu algebra $A(V^0_{\hat\fg}(\ell,0))$ and irreducible $V^0_{\hat\fg}(\ell,0)$-modules}
In this section, we continue to study the quotient vertex algebra $V^0_{\hat\fg}(\ell,0)$.
We show that the Zhu algebra $A(V^0_{\hat\fg}(\ell,0))$ is isomorphic to the restricted enveloping algebra
of the given restricted Lie algebra $\fg$ and then we classify all irreducible $\BN$-graded $V^0_{\hat\fg}(\ell,0)$-modules.

First, we consider the Zhu algebra $A(V_{\hat\fg}(\ell,0))$.
Following the same proof of Frenkel-Zhu for characteristic zero (see \cite{FZ}) we have:

\begin{proposition}\label{th:AV-aff}
The linear map
\begin{equation}\label{map-phi}
\phi: \  \fg\rightarrow A(V_{\hat\fg}(\ell,0));\ a\mapsto [a]
\end{equation}
extends uniquely to an algebra isomorphism from $U(\fg)$ to $A(V_{\hat\fg}(\ell,0))$.
\end{proposition}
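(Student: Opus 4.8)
The plan is to follow the classical Frenkel--Zhu computation for affine vertex algebras, adapted to characteristic $p$. First I would check that $\phi$ in \eqref{map-phi} is a well-defined linear map and that it is an algebra homomorphism on the appropriate level. The key identity is the analogue of the Frenkel--Zhu relation $[a][b]=[a_{-1}b]$ together with $[a_{-1}b]-[b_{-1}a]=[a_0 b]$, which in $A(V_{\hat\fg}(\ell,0))$ should give $[a][b]-[b][a]=[[a,b]]$ for $a,b\in\fg$. To establish this I would use the definition of the $*$-product $a*b=\Res_z\frac{(1+z)^{\deg a}}{z}Y(a,z)b$ with $\deg a=1$ for $a\in\fg\subset V_{\hat\fg}(\ell,0)_{(1)}$, so that $a*b=a_{-1}b+a_0b$, and compute the commutator $[a]*[b]-[b]*[a]$ modulo $O(V_{\hat\fg}(\ell,0))$. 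Since $a_0b=[a,b]$ in $\hat\fg$ (the level term drops out because $\deg$ forces $m+n=0$ to contribute only to $\bk$, which is not in the span of $\fg$ as a degree-$1$ subspace — more precisely $a_0b=[a,b](0)+\langle a,b\rangle\delta\,\bk$ and the $\bk$-term lies in degree $0$, but one checks it contributes $\langle a,b\rangle\ell\1$ which is symmetric in $a,b$ and hence cancels in the commutator), the bracket relation follows, so $\phi$ extends to an algebra homomorphism $\bar\phi:U(\fg)\to A(V_{\hat\fg}(\ell,0))$ by the universal property.

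Next I would prove surjectivity. By the P-B-W theorem $V_{\hat\fg}(\ell,0)$ is spanned by vectors $a^{(1)}(-n_1)\cdots a^{(r)}(-n_r)\1$ with $n_i\ge 1$. Using Lemma~\ref{th:DnvmodO} (which says $\D^{(k)}v\equiv\binom{-\deg v}{k}v \pmod{O(V)}$) together with the relation $a_{-n-1}v\equiv(-1)^{n+?}(\cdots)$ type reductions, one shows by induction on the total degree that every $[v]$ lies in the image of $\bar\phi$; concretely, one reduces $a(-n)w$ modulo $O(V)$ to a combination of $a(-1)$ applied to lower-degree vectors, exactly as in the characteristic-zero argument, the point being that Lemma~\ref{th:DnvmodO} and the commutator formula let one rewrite any mode $a(-n)$ with $n\ge 2$ in terms of $a(-1)$ and brackets. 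This gives $A(V_{\hat\fg}(\ell,0))=\bar\phi(U(\fg))$.

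The main obstacle is injectivity, i.e. showing $\bar\phi$ has trivial kernel, equivalently that $\dim A(V_{\hat\fg}(\ell,0))\ge\dim U(\fg)$. Here I would use Propositions~\ref{WtoA(V)module} and \ref{A(V)toV}: it suffices to exhibit enough $\BN$-graded $V_{\hat\fg}(\ell,0)$-modules to separate elements of $U(\fg)$. Concretely, for every $\fg$-module $U$ one forms the generalized Verma $\hat\fg$-module $\widehat{M}(U)=U(\hat\fg)\otimes_{U(\fg[t]+\BF\bk)}U$ of level $\ell$ (with $\fg=\fg\otimes t^0$ acting on $U$ and $\fg\otimes t\BF[t]$ acting by $0$), which is a locally truncated $\hat\fg$-module hence a $V_{\hat\fg}(\ell,0)$-module, and is $\BN$-graded with degree-zero piece $U$. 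By Proposition~\ref{WtoA(V)module}, $U$ becomes an $A(V_{\hat\fg}(\ell,0))$-module with $[a]$ acting as $a_{0}=a(0)$, i.e. via the original $\fg$-action on $U$. Thus the $A(V_{\hat\fg}(\ell,0))$-module structures on the degree-zero spaces realize, via $\bar\phi$, all $\fg$-module (hence all $U(\fg)$-module) structures; taking $U=U(\fg)$ itself (the regular module) shows $\bar\phi$ is injective. Nothing in this argument uses characteristic zero, so it goes through over $\BF$ verbatim once one has checked that the generalized Verma construction yields a genuine $V_{\hat\fg}(\ell,0)$-module, which follows from the stated characterization of $V_{\hat\fg}(\ell,0)$-modules as locally truncated level-$\ell$ $\hat\fg$-modules (Proposition~\ref{th:affineVA} and the remark following it). Combining surjectivity and injectivity yields the claimed isomorphism $U(\fg)\xrightarrow{\ \sim\ }A(V_{\hat\fg}(\ell,0))$.
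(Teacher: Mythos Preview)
Your proposal is correct and is precisely the Frenkel--Zhu argument the paper invokes (the paper gives no proof beyond citing \cite{FZ}). One small clean-up: your parenthetical about $a_0b$ is garbled --- in $V_{\hat\fg}(\ell,0)$ one has $a_0b=a(0)b(-1)\1=[a,b](-1)\1=[a,b]$ with no $\bk$-contribution (since $[a(0),b(-1)]=[a,b](-1)$), so there is nothing to cancel; the commutator computation then goes through via $a_{-1}b-b_{-1}a=\D^{(1)}[a,b]\equiv -[a,b]\pmod{O(V)}$ by Lemma~\ref{th:DnvmodO}, giving $[a]*[b]-[b]*[a]=[[a,b]]$ as you want.
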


Recall that $V_{\hat\fg}^{0}(\ell,0)=V_{\hat\fg}(\ell,0)/J_{0}$, where $J_{0}$ is
the $\hat\fg$-submodule of $V_{\hat\fg}(\ell,0)$
generated by vectors
\begin{equation*}
    \big(a(-m)^p-a^{[p]}(-pm)\big)\1
\end{equation*}
for $a\in \fg,\ m\in\BZ_+$. Since $a(-m)^p-a^{[p]}(-mp)$ lie in the center of $U(\hat{\fg})$, we have
\begin{equation}
  J_{0}={\rm span}\left\{  \big(a(-m)^p-a^{[p]}(-pm)\big)u\mid
a\in \fg,\ m\in\BZ_+, \  u\in V_{\hat\fg}(\ell,0)\right\}.
\end{equation}

\begin{lemma}\label{initial-step}
For $a\in\fg$, $n\in\BZ_+$, $u\in V_{\hat\fg}(\ell,0)$, the following holds in $A(V_{\hat\fg}(\ell,0))$:
\begin{equation*}
    [(a(-n)^p-a^{[p]}(-np))u]=(-1)^{n-1}[u]\big([a]^p-[a^{[p]}]\big)
\end{equation*}
and we have $[a(-n)^p\1-a^{[p]}(-np)\1]=(-1)^{n-1}\phi(a^p-a^{[p]})$.
\end{lemma}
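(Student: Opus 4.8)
The plan is to prove the displayed identity in two stages: first establish the single-vector case $[(a(-n)^p-a^{[p]}(-np))\1]=(-1)^{n-1}\phi(a^p-a^{[p]})$, and then bootstrap from it to the general case with an arbitrary $u\in V_{\hat\fg}(\ell,0)$. For the first stage I would use Lemma~\ref{th:a(x)^p} (or rather Corollary~\ref{th:a_n^p}, since $a_ia=\delta_{i,1}\langle a,a\rangle\1\in\BF\1$ for $i\in\BN$) together with Lemma~\ref{D-module} to write down $Y((a(-n)^p-a^{[p]}(-np))\1,x)$ explicitly, exactly as was done in the proof of Proposition~\ref{th:L-module-property}. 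From that closed form one reads off the modes $\big((a(-n)^p-a^{[p]}(-np))\1\big)_{k}$ and, since $(a(-n)^p-a^{[p]}(-np))\1$ is homogeneous of degree $np$, the relevant formula $[v]=v_{\deg v-1}$ picks out the mode $k=np-1$; this should collapse (by Fermat's little theorem and the vanishing of the binomial coefficients $\binom{m}{j}$ with $p\nmid j$, cf. Lemma~\ref{Tresults}) to $\sum_{j} a(j)^p (\cdots) - \sum_j a^{[p]}(pj)(\cdots)$ evaluated so that only $a(-n)\cdots$ and $a^{[p]}(-np)$ type terms survive, yielding the stated scalar $(-1)^{n-1}$ times $[a]^p-[a^{[p]}]=\phi(a^p)-\phi(a^{[p]})=\phi(a^p-a^{[p]})$, the last equality because $\phi$ is an algebra homomorphism by Proposition~\ref{th:AV-aff}.

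For the general case, the key structural fact to exploit is that $a(-n)^p-a^{[p]}(-np)$ lies in the center of $U(\hat\fg)$ (stated just before this lemma), so that $(a(-n)^p-a^{[p]}(-np))u = u$ acted on by this central element in whichever order is convenient, and more importantly that $J_0$ is precisely the span of such vectors. I would argue that modulo $O(V_{\hat\fg}(\ell,0))$ one can "move the central element past $u$": concretely, I expect to show by induction on the PBW-filtration (or the grading) of $u$ that $[zu]=[uz']$ for appropriate comparisons, reducing to the case where $u$ is a product of modes $b^{(i)}(-m_i)$ applied to $\1$. The cleanest route is probably to use the multiplication $[x][y]=[x*y]$ in $A(V)$ and the fact that $[a(-n)^p-a^{[p]}(-np)]$ has already been identified as the central-ish element $(-1)^{n-1}\phi(a^p-a^{[p]})$; combined with the description of $A(V_{\hat\fg}(\ell,0))\cong U(\fg)$ from Proposition~\ref{th:AV-aff} and the fact that $a^p-a^{[p]}$ is central in $U(\fg)$, one gets $[(a(-n)^p-a^{[p]}(-np))u]=[a(-n)^p-a^{[p]}(-np)]\cdot[u]$ or $[u]\cdot[a(-n)^p-a^{[p]}(-np)]$ once one knows the left factor can be pulled out of the $*$-product; here one needs a lemma of the shape $[v*u]=[v][u]$ even when $v$ is not the vacuum, which is exactly the Zhu-algebra multiplication property.

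The main obstacle I anticipate is the bookkeeping in the first stage: extracting the single mode $k=np-1$ from the $p$-th power expression in Corollary~\ref{th:a_n^p} and checking that all the "cross terms" (monomials in the $a(j)$'s other than the pure $a(-n)^p$ term, and similarly all $a^{[p]}(pj)$ with $j\ne -n$) either cancel against each other or are killed by an argument like the one in Proposition~\ref{Lnp-formula} — namely, that a putative leftover term $\gamma\,a^{[p]}(-np+s)$ must vanish because bracketing with a suitable $b(t)$ (using that the original element is central, hence its modes commute with all $\hat\fg$-modes on $V_{\hat\fg}(\ell,0)$, the analogue of \eqref{eq:LiDm=0}) forces $\gamma=0$. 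Granting that the degree-$np$ homogeneity pins down the mode and that Fermat plus Lucas handle the binomial coefficients, this reduces to a finite and essentially mechanical identification, so I expect no conceptual difficulty beyond careful indexing. The passage to general $u$ should then be short.
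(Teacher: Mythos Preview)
Your plan is workable in outline but takes a considerably more circuitous route than the paper, and in doing so misses the single observation that makes the lemma immediate. The paper does \emph{not} first treat $u=\1$ and then bootstrap; instead it proves directly, for any $a\in\fg$ and any $v\in V_{\hat\fg}(\ell,0)$, that
\[
[a(-n)v]=(-1)^{n-1}[v]\,[a]
\]
in $A(V_{\hat\fg}(\ell,0))$. This follows from two short facts: (i) the definition of $O(V)$ (with $\deg a=1$) gives $[a(-k-1)v]=-[a(-k)v]$ for $k\ge1$, hence $[a(-n)v]=(-1)^{n-1}[a(-1)v]$; and (ii) skew symmetry together with Lemma~\ref{th:DnvmodO} gives $[a(-1)v]=[v*a]=[v][a]$. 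Applying this $p$ times yields $[a(-n)^p u]=(-1)^{(n-1)p}[u][a]^p$ and once more with $a^{[p]}$ gives $[a^{[p]}(-np)u]=(-1)^{np-1}[u][a^{[p]}]$; since $p$ is odd, both signs equal $(-1)^{n-1}$ and the lemma follows. No centrality, no mode extraction, no induction on $u$.

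Your two stages each have a soft spot that this formula would repair. In Stage~1, the sentence ``$[v]=v_{\deg v-1}$ picks out the mode $k=np-1$'' conflates the class $[v]\in A(V)$ with the operator $o(v)=v_{\deg v-1}$; to turn the mode computation into an identification of $[v]$ you would still need a faithfulness argument (that $U(\fg)\cong A(V)$ is detected by its action on top levels), which you do not mention. In Stage~2, the crucial point you elide is that $(a(-n)^p-a^{[p]}(-np))u$ is an action of $U(\hat\fg)$ on $u$, \emph{not} a Zhu $*$-product of some $v$ with $u$; centrality lets you rewrite it as $Y\cdot(z\1)$ for $u=Y\1$, but computing $[Y(z\1)]$ by your proposed induction on the PBW filtration immediately requires a formula for $[b(-m)w]$ with $w$ arbitrary --- exactly the identity above. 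So your bootstrap can be completed, but only by proving the paper's key step along the way, at which point the two-stage detour (and the heavy machinery of Corollary~\ref{th:a_n^p} and the Proposition~\ref{Lnp-formula}-style cancellation argument) becomes unnecessary.
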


\begin{proof}  Let $a\in \fg\subset V_{\hat\fg}(\ell,0)$.
As $\deg a=1$,  for any $k\ge 1$ and $v\in V_{\hat\fg}(\ell,0)$, (recalling that $a_{m}=a(m)$ for $m\in \BZ$) we have
\begin{equation*}
    (a(-k-1)+a(-k))v=\Res_{z}\frac{(1+z)^{\deg a}}{z^{k+1}}Y(a,z)v\in O(V_{\hat\fg}(\ell,0)),
\end{equation*}
which implies that
$[a(-k-1)v]=-[a(-k)v] $ holds in $A(V_{\hat\fg}(\ell,0))$.
Using this we get
\begin{eqnarray}\label{estep1}
[a(-n)v]=(-1)^{n-1}[a(-1)v]
\end{eqnarray}
for any $n\ge 1,\ v\in V_{\hat\fg}(\ell,0)$.

Recall from Lemma \ref{th:DnvmodO} that  $[\D^{(k)}u]= (-1)^{k}\binom{-\deg u}{k}[u]$ for any $k\in \BN$ and for
 any homogeneous $u$.
Using this fact and the skew symmetry we get
\begin{align*}
[a(-1)v]&=\sum_{k\ge 0}(-1)^{k}[\D^{(k)}v_{-1+k}a]
=\sum_{k\ge 0}(-1)^{k}\binom{-(\deg v+1-k)}{k}[v_{-1+k}a]\\
&=\sum_{k\ge 0}\binom{\deg u}{k}[v_{-1+k}a]=[v*a]=[v][a],
\end{align*}
recalling the definition of $v*a$ from (\ref{a*b}).
Consequently, we have
\begin{equation}\label{estep2}
[a(-n)v]=(-1)^{n-1}[v][a]
\end{equation}
for any $n\ge 1,\ v\in V_{\hat\fg}(\ell,0)$.
Then
\begin{equation*}
    [a(-n)^p u]=(-1)^{(n-1)p}[u][a]^p,\   \   \   \   [a^{[p]}(-np)u]=(-1)^{np-1}[u][a^{[p]}].
\end{equation*}
Noticing that $(-1)^{(n-1)p}=(-1)^{np-1}=(-1)^{n-1}$ as $p$ is odd, we obtain
\begin{eqnarray}\label{first1}
 [(a(-n)^p-a^{[p]}(-np))u]=(-1)^{n-1}[u]([a]^p-[a^{[p]}]),
\end{eqnarray}
confirming the first assertion.
The second assertion follows immediately.
\end{proof}

Recall that the restricted universal enveloping algebra $\mathfrak{u}(\fg)$ of the restricted Lie algebra $\fg$
is  the quotient algebra of $U(\fg)$ by the ideal generated by (central) elements $a^p-a^{[p]}$ for all $a\in\fg$.

As one of the main results of this section, we have:

\begin{theorem}\label{main-affinevoa}
The algebra isomorphism $\phi$ from $U(\fg)$ to $A(V_{\hat\fg}(\ell,0))$ in Proposition \ref{th:AV-aff}
reduces to an isomorphism from $\mathfrak{u}(\fg)$ to $A(V^0_{\hat\fg}(\ell,0))$.
\end{theorem}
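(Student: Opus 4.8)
The plan is to invoke the Frenkel--Zhu principle, already used in the proof of Theorem \ref{main-Vir}, that the Zhu algebra of a quotient is the corresponding quotient of the Zhu algebra: for any ideal $I$ of a $\BZ$-graded vertex algebra $V$, the image $\bar I=\{[u]\mid u\in I\}$ is a two-sided ideal of $A(V)$ and the natural map induces an algebra isomorphism $A(V/I)\cong A(V)/\bar I$ (see \cite{FZ}). Applying this with $V=V_{\hat\fg}(\ell,0)$ and $I=J_0$, it suffices to identify $\bar J_0$ under the isomorphism $\phi\colon U(\fg)\to A(V_{\hat\fg}(\ell,0))$ of Proposition \ref{th:AV-aff}. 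Precisely, I will show that $\phi$ carries the two-sided ideal $K$ of $U(\fg)$ generated by the central elements $a^p-a^{[p]}$ ($a\in\fg$) \emph{onto} $\bar J_0$; since $\mathfrak u(\fg)=U(\fg)/K$ by definition, this produces the desired induced isomorphism $\mathfrak u(\fg)=U(\fg)/K\xrightarrow{\ \sim\ }A(V_{\hat\fg}(\ell,0))/\bar J_0\cong A(V^0_{\hat\fg}(\ell,0))$.

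For the inclusion $\phi(K)\subseteq\bar J_0$: by Lemma \ref{initial-step} with $n=1$ one has $[a(-1)^p\1-a^{[p]}(-p)\1]=\phi(a^p-a^{[p]})$, while $(a(-1)^p-a^{[p]}(-p))\1\in J_0$; hence $\phi(a^p-a^{[p]})\in\bar J_0$ for every $a\in\fg$. Because $\bar J_0$ is a two-sided ideal of $A(V_{\hat\fg}(\ell,0))$ and $\phi$ is an algebra isomorphism, the whole ideal $\phi(K)$ is contained in $\bar J_0$.

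For the reverse inclusion $\bar J_0\subseteq\phi(K)$: recall that $a(-m)^p-a^{[p]}(-mp)$ is central in $U(\hat\fg)$, so $J_0=\mathrm{span}\{(a(-m)^p-a^{[p]}(-pm))u\mid a\in\fg,\ m\in\BZ_+,\ u\in V_{\hat\fg}(\ell,0)\}$, and therefore $\bar J_0$ is spanned by the classes of these elements. By Lemma \ref{initial-step}, $[(a(-m)^p-a^{[p]}(-pm))u]=(-1)^{m-1}[u]\,\phi(a^p-a^{[p]})$; writing $[u]=\phi(w)$ for some $w\in U(\fg)$ via surjectivity of $\phi$, this equals $(-1)^{m-1}\phi\big(w(a^p-a^{[p]})\big)$, which lies in $\phi(K)$ since $K$ contains $w(a^p-a^{[p]})$. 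Hence $\bar J_0\subseteq\phi(K)$, and combining the two inclusions gives $\bar J_0=\phi(K)$.

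I do not expect a serious obstacle: the genuine content is packaged in Lemma \ref{initial-step}, and what remains is a formal manipulation of ideals. The two points that need care are the precise citation and statement of $A(V/I)\cong A(V)/\bar I$ (with $\bar I$ an ideal), exactly as invoked in the proof of Theorem \ref{main-Vir}, and the description of $J_0$ as the indicated span, which relies on the centrality of $a(-m)^p-a^{[p]}(-mp)$ in $U(\hat\fg)$ noted just before Lemma \ref{initial-step}.
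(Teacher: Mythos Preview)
Your proposal is correct and follows essentially the same route as the paper's proof: both invoke the Frenkel--Zhu isomorphism $A(V/I)\cong A(V)/\bar I$ from \cite{FZ}, use the span description of $J_0$ coming from centrality of $a(-m)^p-a^{[p]}(-mp)$, and apply Lemma \ref{initial-step} to identify $\bar J_0$ with the ideal generated by $[a]^p-[a^{[p]}]$. The paper simply states in one line that Lemma \ref{initial-step} gives $[J_0]$ as exactly this ideal, whereas you spell out the two inclusions $\phi(K)\subseteq\bar J_0$ and $\bar J_0\subseteq\phi(K)$ explicitly; there is no substantive difference.
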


\begin{proof}
Set $[J_0]=\{ [u]\mid u\in J_0\}\subset A(V_{\hat\fg}(\ell,0))$. From \cite{FZ}, $[J_0]$ is an ideal of
$A(V_{\hat\fg}(\ell,0))$ and
\begin{equation*}
    A(V^0_{\hat\fg}(\ell,0))\simeq A(V_{\hat\fg}(\ell,0))/[J_0].
\end{equation*}
By Lemma \ref{initial-step} we see that $[J_0]$ is exactly the ideal of $A(V_{\hat\fg}(\ell,0))$ generated by
elements $[a]^p-[a^{[p]}]$ for $a\in\fg$. Then it follows that the algebra isomorphism $\phi$ from
$U(\fg)$ to $A(V_{\hat\fg}(\ell,0))$ in Proposition \ref{th:AV-aff}
reduces to an isomorphism from $\mathfrak{u}(\fg)$ to $A(V^0_{\hat\fg}(\ell,0))$.
\end{proof}

Let $U$ be a general $\fg$-module. We make $U$ a $(\fg[t]+\BF \bk)$-module by
letting $t\fg [t]$ act on $U$ trivially and $\bk$ act as scalar $\ell$. Form a generalized Verma module
\begin{equation*}
    M_{\hat{\fg}}(\ell,U)=U(\hat{\fg})\otimes _{U(\fg[t]+\BF \bk)}U,
\end{equation*}
which is a $\BZ$-graded $\hat{\fg}$-module of level $\ell$. We have
\begin{equation*}
    M_{\hat{\fg}}(\ell,U)=\bigoplus_{n\in \BZ}M_{\hat{\fg}}(\ell,U)(n),
\end{equation*}
where $M_{\hat{\fg}}(\ell,U)(n)=0$ for $n<0$ and $M_{\hat{\fg}}(\ell,U)(0)=U$.
That is, $M_{\hat{\fg}}(\ell,U)$ is an $\BN$-graded module.
Denote by $L_{\hat{\fg}}(\ell,U)$ the quotient of
$M_{\hat{\fg}}(\ell,U)$ by the (unique) maximal graded submodule $W$ with $W(0)=0$.
Then $L_{\hat{\fg}}(\ell,U)$ is also an $\BN$-graded module with
$L_{\hat{\fg}}(\ell,U)(0)=U$. It can be readily seen that if $U$ is an irreducible $\fg$-module,
then $L_{\hat{\fg}}(\ell,U)$ is  an irreducible $\BN$-graded module.
Notice that $L_{\hat{\fg}}(\ell,U)$ is naturally an irreducible $\BN$-graded $V_{\hat\fg}(\ell,0)$-module.

Combining Propositions \ref{WtoA(V)module} and \ref{A(V)toV} with Theorem \ref{main-affinevoa}
we immediately have:

\begin{corollary}\label{V0irreducible}
For any irreducible $\mathfrak{u}(\fg)$-module $U$, $L_{\hat{\fg}}(\ell,U)$ is naturally an irreducible
$\BN$-graded $V^0_{\hat\fg}(\ell,0)$-module, and on the other hand, every irreducible
$\BN$-graded $V^0_{\hat\fg}(\ell,0)$-module is isomorphic to a module of this form.
\end{corollary}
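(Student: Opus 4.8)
The plan is to combine the identification of the Zhu algebra in Theorem~\ref{main-affinevoa} with Zhu's $A(V)$-theory (Propositions~\ref{WtoA(V)module} and \ref{A(V)toV}), and then to realize the abstract modules produced by the latter as the concrete generalized Verma quotients $L_{\hat\fg}(\ell,U)$. By Proposition~\ref{th:AV-aff} and Theorem~\ref{main-affinevoa}, the map $a\mapsto[a]$ induces an algebra isomorphism $\bar\phi\colon\mathfrak{u}(\fg)\to A(V^0_{\hat\fg}(\ell,0))$, so irreducible $A(V^0_{\hat\fg}(\ell,0))$-modules are exactly irreducible $\mathfrak{u}(\fg)$-modules. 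Recall also from Proposition~\ref{WtoA(V)module} that for an $\BN$-graded $V^0_{\hat\fg}(\ell,0)$-module $W$, the action of $[a]$ on $W(0)$ is by $a_0=a(0)$; hence the $A(V^0_{\hat\fg}(\ell,0))$-module structure on $W(0)$ is, through $\bar\phi$, just the $\fg$-module structure coming from the $\hat\fg$-action on $W$.

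\emph{Step 1: $L_{\hat\fg}(\ell,U)$ is an irreducible $\BN$-graded $V^0_{\hat\fg}(\ell,0)$-module.} Let $U$ be an irreducible $\mathfrak{u}(\fg)$-module; then $U$ is in particular an irreducible restricted $\fg$-module, so $L_{\hat\fg}(\ell,U)$ is an irreducible $\BN$-graded $\hat\fg$-module of level $\ell$ with $L_{\hat\fg}(\ell,U)(0)=U$, hence an irreducible $\BN$-graded $V_{\hat\fg}(\ell,0)$-module. The crux --- and the step I expect to be the main obstacle --- is to show that $J_0$ acts trivially on $L_{\hat\fg}(\ell,U)$, equivalently (by Proposition~\ref{th:L-module-property} with $\chi=0$) that $a(j)^p-a^{[p]}(jp)=0$ on $L_{\hat\fg}(\ell,U)$ for all $a\in\fg$ and $j\in\BZ$. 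Because $\hat\fg$ is restricted with $a(j)^{[p]}=a^{[p]}(jp)$, each $a(j)^p-a^{[p]}(jp)$ is central in $U(\hat\fg)$, hence commutes with the $\hat\fg$-action while shifting degree by $-jp$; so its kernel and its image are graded $\hat\fg$-submodules of the irreducible module $L_{\hat\fg}(\ell,U)$. For $j>0$ the kernel contains the nonzero space $L_{\hat\fg}(\ell,U)(0)$, hence equals the whole module; for $j<0$ the image misses $L_{\hat\fg}(\ell,U)(0)$, hence is zero; for $j=0$ the operator is a degree-preserving $\hat\fg$-module endomorphism vanishing on the generating subspace $L_{\hat\fg}(\ell,U)(0)=U$ (since $a^p-a^{[p]}=0$ on $U$), hence vanishes. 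Therefore $L_{\hat\fg}(\ell,U)$ is a $V^0_{\hat\fg}(\ell,0)$-module; it remains irreducible as such because its $V^0_{\hat\fg}(\ell,0)$-submodules coincide with its $V_{\hat\fg}(\ell,0)$-submodules, and its degree-zero part is $U$.

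\emph{Step 2: every irreducible $\BN$-graded $V^0_{\hat\fg}(\ell,0)$-module is of this form.} Let $W$ be such a module, normalized so that $W(0)\neq 0$. By the first paragraph, $W(0)$ is a $\mathfrak{u}(\fg)$-module, and it is irreducible: a nonzero proper $\mathfrak{u}(\fg)$-submodule $U'\subsetneq W(0)$ generates a nonzero graded $\hat\fg$-submodule of $W$ whose degree-zero part is again $U'$ (only the degree-zero elements of $U(\hat\fg)$ reach degree zero, and these act through $U(\fg)$, which stabilizes $U'$), contradicting irreducibility of $W$. Set $U=W(0)$. Since $W$ is irreducible it is generated over $\hat\fg$ by $U$, so the universal property of $M_{\hat\fg}(\ell,U)$ gives a surjection $M_{\hat\fg}(\ell,U)\to W$ of $\BN$-graded $\hat\fg$-modules restricting to the identity on $U$; its kernel is a graded submodule with zero degree-zero part, so $W$ is a nonzero quotient of $L_{\hat\fg}(\ell,U)$, and since $L_{\hat\fg}(\ell,U)$ is irreducible by Step~1 we conclude $W\cong L_{\hat\fg}(\ell,U)$. (Equivalently, one may invoke Proposition~\ref{A(V)toV}: $L(W(0))$ is an irreducible $\BN$-graded $V^0_{\hat\fg}(\ell,0)$-module with the same degree-zero part as $W$, and then $W\cong L(W(0))\cong L_{\hat\fg}(\ell,W(0))$.) Together, Steps~1 and 2 yield the asserted classification.
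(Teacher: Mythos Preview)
Your proof is correct and uses the same core ingredients the paper invokes---Theorem~\ref{main-affinevoa} together with Propositions~\ref{WtoA(V)module} and~\ref{A(V)toV}---so the overall strategy coincides with the paper's one-line ``immediate'' derivation.

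The one place where you do a bit more than the paper is Step~1: rather than appealing to Proposition~\ref{A(V)toV} to produce an abstract irreducible $\BN$-graded module $L(U)$ and then identifying it with $L_{\hat\fg}(\ell,U)$, you verify directly that $L_{\hat\fg}(\ell,U)$ is a $V^0_{\hat\fg}(\ell,0)$-module by exploiting the centrality of $a(j)^p-a^{[p]}(jp)$ in $U(\hat\fg)$ together with graded irreducibility. This is a clean self-contained argument and avoids any hidden identification step; the paper's route is shorter to state but leaves that identification implicit. Your Step~2 argument (irreducibility of $W(0)$ via the PBW observation that only $U(\fg)$ survives when acting from degree~$0$ back to degree~$0$, and the factorization through $L_{\hat\fg}(\ell,U)$) is the standard unpacking of what Propositions~\ref{WtoA(V)module} and~\ref{A(V)toV} give, and is fine.
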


Recall that vertex algebra $V_{\hat\fg}(\ell,0)$ is a $\BZ$-graded $\hat{\fg}$-module with
$V_{\hat\fg}(\ell,0)_{(n)}=0$ for $n<0$ and $V_{\hat\fg}(\ell,0)_{(0)}=\BF \1$, where $V_{\hat\fg}(\ell,0)$
as a $\hat{\fg}$-module is generated from $\1$.
Let $J$ be the maximal graded $\hat{\fg}$-submodule of $V_{\hat\fg}(\ell,0)$ with $J_{(0)}=0$.
Recall that a left ideal of vertex algebra $V_{\hat\fg}(\ell,0)$ exactly amounts to
a $\hat{\fg}$-submodule of $V_{\hat\fg}(\ell,0)$.
By Lemma \ref{max-ideal},  $J$ is the (unique) maximal graded ideal of vertex algebra $V_{\hat\fg}(\ell,0)$.

Set
\begin{equation}
L_{\hat\fg}(\ell,0)=V_{\hat\fg}(\ell,0)/J,
\end{equation}
a simple $\BN$-graded vertex algebra with $L_{\hat\fg}(\ell,0)_{(0)}=\BF \1$. Furthermore, we have
$L_{\hat\fg}(\ell,0)_{(1)}=\fg$ if $\ell\ne 0$ and if the invariant bilinear form  $\langle\cdot,\cdot\rangle$ on $\fg$
is non-degenerate, just as in the case of characteristic $0$ (cf. \cite{LL}).
It is clear that  $L_{\hat\fg}(\ell,0)$ is naturally a quotient vertex algebra of $V^0_{\hat\fg}(\ell,0)$.

From Corollary \ref{V0irreducible} we immediately have:

\begin{corollary}
Every $L_{\hat\fg}(\ell,0)$-module as a $V_{\hat\fg}(\ell,0)$-module
 or a $\hat{\fg}$-module is isomorphic to $L_{\hat\fg}(\ell,U)$ for
some irreducible $\mathfrak{u}(\fg)$-module $U$.
\end{corollary}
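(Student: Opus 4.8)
The plan is to obtain this as a formal consequence of Corollary~\ref{V0irreducible}, the statement being understood, in keeping with the preceding results, as a classification of the irreducible $\BN$-graded modules. The mechanism is the chain of vertex algebra surjections
\begin{equation*}
    V_{\hat\fg}(\ell,0)\twoheadrightarrow V^0_{\hat\fg}(\ell,0)\twoheadrightarrow L_{\hat\fg}(\ell,0),
\end{equation*}
the second map existing because $L_{\hat\fg}(\ell,0)$ is a quotient vertex algebra of $V^0_{\hat\fg}(\ell,0)$; concretely, $J_0$ is generated by the homogeneous vectors $\big(a(-m)^p-a^{[p]}(-pm)\big)\1$ of positive degree $pm$, so $(J_0)_{(0)}=0$ and Lemma~\ref{max-ideal} gives $J_0\subset J$, whence the surjection $V^0_{\hat\fg}(\ell,0)\twoheadrightarrow L_{\hat\fg}(\ell,0)$.

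First I would observe that restriction along a surjection of $\BZ$-graded vertex algebras carries a $\BZ$-graded module over the target to a $\BZ$-graded module over the source, preserving the grading; and since the map is onto, the submodule lattice is unchanged, so irreducible $\BN$-graded modules are sent to irreducible $\BN$-graded modules. Applying this to $V^0_{\hat\fg}(\ell,0)\twoheadrightarrow L_{\hat\fg}(\ell,0)$, any irreducible $\BN$-graded $L_{\hat\fg}(\ell,0)$-module $W$ becomes an irreducible $\BN$-graded $V^0_{\hat\fg}(\ell,0)$-module, and Corollary~\ref{V0irreducible} then produces an irreducible $\mathfrak{u}(\fg)$-module $U$ with $W\cong L_{\hat\fg}(\ell,U)$.

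It then remains to say why the isomorphism is simultaneously one of $V_{\hat\fg}(\ell,0)$-modules and of $\hat\fg$-modules. This is because a $V_{\hat\fg}(\ell,0)$-module is exactly a locally truncated $\hat\fg$-module of level $\ell$ (recalled in the proof of Proposition~\ref{th:L-module-property}), and an $L_{\hat\fg}(\ell,0)$-module, viewed through the surjection $V_{\hat\fg}(\ell,0)\twoheadrightarrow L_{\hat\fg}(\ell,0)$, is precisely such a $\hat\fg$-module; the three descriptions of $W$ refer to the same underlying data and the same morphisms, so $W\cong L_{\hat\fg}(\ell,U)$ holds in each category at once.

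There is essentially no obstacle here: the only items to check are the containment $J_0\subset J$, immediate from Lemma~\ref{max-ideal} since the generators of $J_0$ are homogeneous of positive degree, and the elementary fact that pulling a module back along a surjection of vertex algebras preserves irreducibility among $\BN$-graded modules. Everything substantive has already been carried out in Corollary~\ref{V0irreducible} and Theorem~\ref{main-affinevoa}.
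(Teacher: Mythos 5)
Your proposal is correct and follows essentially the same route as the paper, which simply asserts that the corollary is immediate from Corollary \ref{V0irreducible} via the quotient map $V^0_{\hat\fg}(\ell,0)\twoheadrightarrow L_{\hat\fg}(\ell,0)$ noted just before the statement. You merely make explicit the standard facts the paper leaves tacit (that $J_0\subset J$ by maximality, and that restriction along a surjection of vertex algebras preserves irreducibility of $\BN$-graded modules), together with the correct reading of the statement as a classification of irreducible $\BN$-graded modules.
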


On the other hand, we have:

\begin{proposition}
Assume that $\fg$ is finite-dimensional.
For any $\ell\in \BF$, vertex algebra $V^0_{\hat\fg}(\ell,0)$ has only finitely many
irreducible $\BN$-graded modules up to equivalences and
$V^0_{\hat\fg}(\ell,0)$ is $C_2$-cofinite.
\end{proposition}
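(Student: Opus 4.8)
The plan is to treat the two assertions separately: the finiteness of irreducible $\BN$-graded modules follows at once from the Zhu-algebra identification of this section, while the $C_2$-cofiniteness is a short computation in the spirit of the Virasoro case.

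For the first assertion, I would appeal to Corollary~\ref{V0irreducible}: every irreducible $\BN$-graded $V^0_{\hat\fg}(\ell,0)$-module is isomorphic to $L_{\hat\fg}(\ell,U)$ for some irreducible $\mathfrak{u}(\fg)$-module $U$. Since the degree-zero component of $L_{\hat\fg}(\ell,U)$ is $U$ as an $A(V^0_{\hat\fg}(\ell,0))$-module and $A(V^0_{\hat\fg}(\ell,0))\cong\mathfrak{u}(\fg)$ by Theorem~\ref{main-affinevoa}, distinct isomorphism classes of irreducible $\mathfrak{u}(\fg)$-modules give inequivalent $L_{\hat\fg}(\ell,U)$ (each of these is concentrated in nonnegative degrees with nonzero degree-zero part, so for such modules equivalence coincides with graded isomorphism). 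Hence the equivalence classes of irreducible $\BN$-graded $V^0_{\hat\fg}(\ell,0)$-modules are in bijection with the isomorphism classes of irreducible $\mathfrak{u}(\fg)$-modules. When $\fg$ is finite-dimensional, $\dim\mathfrak{u}(\fg)=p^{\dim\fg}<\infty$, and a finite-dimensional associative algebra admits only finitely many irreducible modules up to isomorphism; this proves the first claim.

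For $C_2$-cofiniteness I would apply Lemma~\ref{symmetric-algebra} with the finite-dimensional subspace $U=\fg\subset V^0_{\hat\fg}(\ell,0)$. Since $\fg$ generates $V_{\hat\fg}(\ell,0)$ as a vertex algebra and $a_n=a(n)$ there, the P-B-W theorem gives the spanning condition~(\ref{span-V}) for $V_{\hat\fg}(\ell,0)$, hence for its quotient $V^0_{\hat\fg}(\ell,0)$. Lemma~\ref{symmetric-algebra} then yields a surjective algebra homomorphism $\pi\colon S(\fg)\to V^0_{\hat\fg}(\ell,0)/C_2(V^0_{\hat\fg}(\ell,0))$ with $\pi(a)=a+C_2(V^0_{\hat\fg}(\ell,0))$ for $a\in\fg$. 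Now in $V^0_{\hat\fg}(\ell,0)$ one has $a(-1)^p\1=a^{[p]}(-p)\1$ (the defining relation of $J_0$, i.e.\ the case $\chi=0$ of Proposition~\ref{Jchi}), and $a^{[p]}(-p)\1=(a^{[p]})_{-p}\1=\D^{(p-1)}a^{[p]}\in C_2(V^0_{\hat\fg}(\ell,0))$ since $p-1\ge 1$. As $a(-1)^p\1=a_{-1}(a_{-1}(\cdots a_{-1}\1))$ represents the $p$-th power of $a+C_2(V^0_{\hat\fg}(\ell,0))$ in the commutative associative algebra $V^0_{\hat\fg}(\ell,0)/C_2(V^0_{\hat\fg}(\ell,0))$, we get $\pi(a^p)=0$ for every $a\in\fg$. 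Therefore $\pi$ factors through the quotient of $S(\fg)$ by the ideal generated by $a^p$ for all $a\in\fg$. Because $\mathrm{char}\,\BF=p$ and $S(\fg)$ is commutative, $(\sum_i\lambda_i a_i)^p=\sum_i\lambda_i^p a_i^p$, so that ideal is already generated by $a_1^p,\dots,a_d^p$ for any basis $\{a_1,\dots,a_d\}$ of $\fg$, and the resulting quotient is isomorphic to $\BF[x_1,\dots,x_d]/(x_1^p,\dots,x_d^p)$, of dimension $p^d$. Consequently $V^0_{\hat\fg}(\ell,0)/C_2(V^0_{\hat\fg}(\ell,0))$ is finite-dimensional. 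The only points requiring a little care — neither of them deep — are verifying that the spanning hypothesis of Lemma~\ref{symmetric-algebra} survives passage to the quotient and checking the membership $a^{[p]}(-p)\1\in C_2(V^0_{\hat\fg}(\ell,0))$; I anticipate no serious obstacle, since the first assertion is essentially Theorem~\ref{main-affinevoa} together with finite-dimensionality of $\mathfrak{u}(\fg)$, and the second mirrors the Virasoro computation.
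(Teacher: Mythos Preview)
Your proposal is correct and follows essentially the same approach as the paper: the first assertion via Corollary~\ref{V0irreducible} together with the finite-dimensionality of $\mathfrak{u}(\fg)$, and the second via Lemma~\ref{symmetric-algebra} applied to $U=\fg$, using the relation $a(-1)^p\1=a^{[p]}(-p)\1$ in $V^0_{\hat\fg}(\ell,0)$ and the observation $a^{[p]}(-p)\1\in C_2(V^0_{\hat\fg}(\ell,0))$. Your write-up adds a few harmless extra details (the explicit dimension bound $p^{\dim\fg}$, the justification $a^{[p]}(-p)\1=\D^{(p-1)}a^{[p]}$), but the argument is the paper's.
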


\begin{proof} The first assertion is clear from Corollary \ref{V0irreducible}
as $\mathfrak{u}(\fg)$ is a finite-dimensional algebra.  For convenience, here
denote $V_{\hat\fg}(\ell,0)$ and $V^{0}_{\hat\fg}(\ell,0)$ simply by $V$ and $V^0$, respectively.
From Lemma \ref{symmetric-algebra},  the commutative associative algebra $V/C_2(V)$ is generated by
$a+C_2(V)$ for $a\in \fg$.
Then $V/C_2(V)$ is isomorphic to a quotient of the symmetric algebra $S(\fg)$.
(In fact, one can show that $V/C_2(V)$ is isomorphic to $S(\fg)$.)
Furthermore, for $a\in \fg$, as $a(-1)^{p}\1-a^{[p]}(-p)\1=0$ in $V^0$ and
$a^{[p]}(-p)\1\in C_{2}(V^0)$, we have $a^{p}=0$ in $V^{0}/C_{2}(V^0)$.
It then follows that $V^{0}/C_{2}(V^0)$ is finite-dimensional.
\end{proof}

For the rest of this  section, we assume that $\fg$ is a finite-dimensional classical simple Lie algebra
with a non-degenerate symmetric invariant bilinear form $\langle\cdot,\cdot\rangle$.
It was known (see \cite{Jac2}, \cite{BW1, BW2}) that $\fg$ is a restricted Lie algebra with a uniquely determined
$p$-mapping.
 Let $h^{\vee}$ denote the dual Coxeter number of $\fg$.
Just as in the case of characteristic zero (see \cite{FZ}), using the Segal-Sugawara construction
we have:

\begin{proposition}
Let $\ell\in \BF$ be such that $\ell\ne -h^{\vee}$. Then there is a conformal vector $\omega \in V_{\hat{\fg}}(\ell,0)$
that makes $V_{\hat{\fg}}(\ell,0)$ a vertex operator algebra with the $\BZ$-grading given in (\ref{Zgrading-affine}).
\end{proposition}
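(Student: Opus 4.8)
The plan is to carry over the classical Segal–Sugawara construction to characteristic $p$, being careful that the denominators $2(\ell+h^\vee)$ that appear are units in $\BF$ under the hypothesis $\ell\neq -h^\vee$ (so that $2(\ell+h^\vee)\neq 0$ in $\BF$, using $p>2$). First I would fix an orthonormal-type basis of $\fg$: since $\langle\cdot,\cdot\rangle$ is non-degenerate, choose a basis $\{u_i\}$ of $\fg$ together with the dual basis $\{u^i\}$ with respect to $\langle\cdot,\cdot\rangle$, and set
\begin{equation*}
\omega=\frac{1}{2(\ell+h^\vee)}\sum_i u_i(-1)u^i(-1)\1\in V_{\hat\fg}(\ell,0)_{(2)}.
\end{equation*}
One checks this is independent of the choice of basis. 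Then I would verify directly, using the iterate formula (Borcherds identities) together with the affine commutation relations, that the components $L(n)$ defined by $Y(\omega,x)=\sum_{n\in\BZ}L(n)x^{-n-2}$ satisfy the Virasoro relations of central charge $c=\ell\dim\fg/(\ell+h^\vee)$, that $L(-1)=\D^{(1)}$ on $V_{\hat\fg}(\ell,0)$, and that $L(0)$ acts as the degree operator on the $\BZ$-grading \eqref{Zgrading-affine}. These are precisely the three conditions demanded by Definition~\ref{def-voa}, together with the standing finiteness facts $\dim V_{\hat\fg}(\ell,0)_{(n)}<\infty$ and $V_{\hat\fg}(\ell,0)_{(n)}=0$ for $n<0$, which are immediate from the P-B-W theorem applied to the $\BZ$-graded construction of $V_{\hat\fg}(\ell,0)$.

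The key intermediate step, exactly as in characteristic zero, is the pair of ``$L$–$a$'' relations: for $a\in\fg$ one needs
\begin{equation*}
[L(m),a(n)]=-n\,a(m+n),\qquad L(-1)a(-1)\1=\D^{(1)}a=a(-2)\1,
\end{equation*}
from which the Virasoro relations for the $L(n)$ among themselves, and the statement $L(0)|_{V_{(n)}}=n$, both follow by a generating-function manipulation identical to the classical one (cf.~\cite{FZ}, \cite{LL}). The computation of $[L(x_1),a(x_2)]$ reduces to evaluating $(\sum_i u_i(-1)u^i(-1)\1)$ against $a$ via the commutator formula $[b_m,c_n]=\sum_{i\ge0}\binom{m}{i}(b_ic)_{m+n-i}$; the only place the characteristic enters is that the constant $2\langle a,a\rangle+\sum_i\langle[u_i,[u^i,a]],\cdot\rangle$ collapses, by definition of $h^\vee$, to $2(\ell+h^\vee)\cdot a(x_2)$ up to the normalization — and here I must confirm that the relation $\sum_i[u_i,[u^i,a]]=2h^\vee a$ (the Casimir acting in the adjoint representation), which defines $h^\vee$, is available over $\BF$; this is part of the hypothesis that $\fg$ is a finite-dimensional classical simple Lie algebra with $h^\vee\in\BF$ its dual Coxeter number. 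Once the $L$–$a$ bracket is in hand, the closure of $\{L(n)\}$ into a Virasoro algebra is formal.

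The main obstacle I anticipate is purely the bookkeeping of the central term: in characteristic $p$ one must check that the coefficient $\tfrac12\binom{m+1}{3}$ arising in $[L(m),L(-m)]$ comes out correctly, i.e.\ that no spurious $p$-torsion is lost or gained when one reduces the classical rational computation mod $p$. Since $\binom{m+1}{3}\in\BZ$ is interpreted in $\BF$ via its integer value (as noted in the preliminaries), and since $2(\ell+h^\vee)$ is invertible, the classical identity
\begin{equation*}
[L(m),L(n)]=(m-n)L(m+n)+\frac{c}{12}\binom{m+1}{3}\cdot 2\,\delta_{m+n,0}
\end{equation*}
holds verbatim, with $c/12$ replaced by $c\cdot(12)^{-1}$, which is legitimate because $p>3$ forces $12$ invertible, and the case $p=3$ is handled separately exactly as in the proof of Lemma~\ref{th:restrictness} (where $\binom{m+1}{3}$-type terms were shown to vanish or behave well mod $3$). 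Assembling these pieces, $\omega$ satisfies all requirements of Definition~\ref{def-voa}, giving $V_{\hat\fg}(\ell,0)$ the structure of a vertex operator algebra with the grading \eqref{Zgrading-affine}, which is the assertion.
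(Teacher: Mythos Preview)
Your proposal is correct and is exactly the approach the paper indicates: the paper does not actually give a proof of this proposition, but simply states it with the preamble ``Just as in the case of characteristic zero (see \cite{FZ}), using the Segal-Sugawara construction we have,'' so your elaboration of the Segal--Sugawara argument is precisely what is intended. One small remark: your handling of the central term is slightly overcomplicated---since the paper's Virasoro relation is written with coefficient $\tfrac{1}{2}\binom{m+1}{3}$ (an integer over $2$), there is no need to invert $12$ or treat $p=3$ separately; the classical computation, carried out in this binomial form, requires only that $2(\ell+h^\vee)$ be invertible.
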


Note that in the case of characteristic $0$, if $\ell$ is generic, then $V_{\hat\fg}(\ell,0)$ is simple, i.e.,
$V_{\hat\fg}(\ell,0)=L_{\hat\fg}(\ell,0)$. Here, we believe that the following is true:

\begin{conjecture}
Assume that $\BF$ is algebraically closed and $\ell\notin \BF_{p}=\{0,1,\dots,p-1\}$. Then
$V^0_{\hat\fg}(\ell,0)=L_{\hat\fg}(\ell,0)$ is a simple $\BZ$-graded vertex algebra.
\end{conjecture}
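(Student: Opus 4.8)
The plan is to prove the stronger statement that the contravariant bilinear form on $V^{0}_{\hat{\fg}}(\ell,0)$ is nondegenerate when $\ell\notin\BF_{p}$; by general principles this is equivalent to simplicity. Concretely, equip $V_{\hat{\fg}}(\ell,0)$ with the canonical symmetric invariant bilinear form $\langle\cdot,\cdot\rangle_{\ell}$ determined by a Chevalley anti-involution of $\hat{\fg}$ and normalized by $\langle\1,\1\rangle_{\ell}=1$ (this construction is characteristic-free). Since $V_{\hat{\fg}}(\ell,0)=U(\hat{\fg})\1$ and $V_{\hat{\fg}}(\ell,0)_{(0)}=\BF\1$, every proper graded $\hat{\fg}$-submodule has zero degree-$0$ component and is therefore contained in $\Rad\langle\cdot,\cdot\rangle_{\ell}$; hence the maximal graded ideal $J$ of $V_{\hat{\fg}}(\ell,0)$ equals $\Rad\langle\cdot,\cdot\rangle_{\ell}$. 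As $J_{0}\subseteq J$, the form is isotropic on $J_{0}$ and so descends to $V^{0}_{\hat{\fg}}(\ell,0)=V_{\hat{\fg}}(\ell,0)/J_{0}$, with radical $J/J_{0}$. Thus $V^{0}_{\hat{\fg}}(\ell,0)=L_{\hat{\fg}}(\ell,0)$ if and only if the descended form is nondegenerate.

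Next I would reduce the nondegeneracy to a statement about polynomials in $\ell$. First, $\fg\otimes t^{-1}\BF[t^{-1}]$ is a restricted Lie subalgebra of $\hat{\fg}$ (it is closed under the bracket with no central term, and under the $p$-mapping, which sends $a(-m)$ to $a^{[p]}(-pm)$), the elements $a(-m)^{p}-a^{[p]}(-pm)$ are central already in $U(\fg\otimes t^{-1}\BF[t^{-1}])$, and by the PBW theorem $V^{0}_{\hat{\fg}}(\ell,0)\cong\mathfrak{u}(\fg\otimes t^{-1}\BF[t^{-1}])$ as $\BN$-graded vector spaces; in particular each $V^{0}_{\hat{\fg}}(\ell,0)_{(n)}$ is finite-dimensional (this also follows from the $C_{2}$-cofiniteness already proved). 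Since $\fg$, a Chevalley basis of it, the invariant form, the $p$-mapping and the ideal $J_{0}$ are all defined over the prime field $\BF_{p}$, one may choose an $\BF_{p}$-rational homogeneous basis, and the Gram matrix $G_{n}(\ell)$ of $\langle\cdot,\cdot\rangle_{\ell}$ on $V^{0}_{\hat{\fg}}(\ell,0)_{(n)}$ has entries in $\BF_{p}[\ell]$. The conjecture now amounts to showing that every root in $\overline{\BF_{p}}$ of every $\det G_{n}(\ell)\in\BF_{p}[\ell]$ lies in $\BF_{p}$; equivalently, $\det G_{n}(\ell)$ divides a power of $\ell^{p}-\ell$, hence is nonzero for $\ell\notin\BF_{p}$.

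To prove this last assertion I would establish a Kac--Kazhdan-type determinant formula for the contravariant form on the restricted vacuum module $V^{0}_{\hat{\fg}}(\ell,0)$ directly in characteristic $p$, showing that each irreducible factor of $\det G_{n}(\ell)$ has the form $(\ell-k)$ with $k\in\BF_{p}$. The guiding heuristic is the comparison with characteristic zero: a reducibility point of a vacuum module arises from a singular vector attached to a positive root $\beta=\alpha+m\delta$ of $\hat{\fg}$, with a linear condition on the level of the shape $m(\ell+h^{\vee})\equiv(\text{explicit integer})\pmod{p}$; when $p\nmid m$ this forces $\ell\in\BF_{p}$ (and $\ell=-h^{\vee}$, coming from imaginary roots, is itself in $\BF_{p}$), so the task is to analyze the residual configurations with $p\mid m$, whose singular vectors involve $p$-divisible modes. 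In the restricted quotient these modes are governed by the relations $a(-m)^{p}=a^{[p]}(-pm)$ defining $J_{0}$, together with the identification $A(V^{0}_{\hat{\fg}}(\ell,0))\cong\mathfrak{u}(\fg)$ from Theorem~\ref{main-affinevoa} and the explicit graded structure $V^{0}_{\hat{\fg}}(\ell,0)\cong\mathfrak{u}(\fg\otimes t^{-1}\BF[t^{-1}])$; one would argue that a homogeneous singular vector of positive degree, combined with these relations, imposes numerical constraints on $\ell$ solvable only for $\ell\in\BF_{p}$. A complementary ingredient, worth pursuing, is a Frobenius-type comparison relating $V^{0}_{\hat{\fg}}(\ell,0)$ and $V^{0}_{\hat{\fg}}(\ell^{p},0)$ via the $p$-th power phenomena of Lemma~\ref{th:a(x)^p} and Corollary~\ref{th:a_n^p}, which renders the reducibility locus Galois-stable, hence (once shown to be finite and cut out over $\BF_{p}$) a union of $\BF_{p}$-points.

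Granting the above, the descended form is nondegenerate for $\ell\notin\BF_{p}$, so $J/J_{0}=0$ and $V^{0}_{\hat{\fg}}(\ell,0)=L_{\hat{\fg}}(\ell,0)$ is a simple $\BZ$-graded vertex algebra; since moreover $\ell\neq0$ and $\langle\cdot,\cdot\rangle$ is nondegenerate, $L_{\hat{\fg}}(\ell,0)_{(1)}=\fg$, consistent with the earlier remark. The genuine obstacle is the third step: controlling the singular vectors of the restricted vacuum module, i.e., quantifying precisely how much the quotient by $J_{0}$ removes from the characteristic-zero reducibility locus. The characteristic-zero comparison cleanly disposes of the $p\nmid m$ contributions, but the $p\mid m$ contributions require an intrinsically modular, restricted-Lie-algebra input that I do not see how to supply in general, which is presumably why the statement is left as a conjecture.
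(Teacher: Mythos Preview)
This statement is a \emph{conjecture} in the paper; no proof is offered there, as you correctly infer in your final sentence.

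Your first two reductions are sound: the contravariant form on $V_{\hat{\fg}}(\ell,0)$ has radical equal to the maximal proper graded submodule $J$; since $J_{0}$ is a proper graded ideal one has $J_{0}\subset J=\Rad$ (this is what you need and have---``isotropic on $J_{0}$'' is weaker and would not suffice for the form to descend), so the form descends with radical $J/J_{0}$; and the identification $V^{0}_{\hat{\fg}}(\ell,0)\cong\mathfrak{u}(\fg\otimes t^{-1}\BF[t^{-1}])$ as graded spaces, together with a Chevalley basis, places each $\det G_{n}(\ell)$ in $\BF_{p}[\ell]$. The gap is precisely step three, and neither of your two suggestions closes it. A Kac--Kazhdan determinant formula for restricted vacuum modules in characteristic $p$ is not available, and the heuristic ``$p\nmid m$ forces $\ell\in\BF_{p}$'' is only suggestive: modular reducibility need not be governed solely by reductions of characteristic-zero singular vectors. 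The Frobenius/Galois-stability remark is circular: the reducibility locus is automatically $\mathrm{Gal}(\overline{\BF_{p}}/\BF_{p})$-stable because it is cut out by polynomials over $\BF_{p}$, but Galois-stability does not force points into $\BF_{p}$ (any $\BF_{p^{r}}$ is Galois-stable as a set). What is actually required is that every irreducible factor of $\det G_{n}(\ell)$ over $\BF_{p}$ be linear, and nothing in the outline supplies that. You have a reasonable framework, not a proof.
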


Consider the simplest case with $\fg={\mathfrak{sl}}_{2}$. Let $\{e,f,h\}$ be the standard basis with
\begin{equation*}
    [e,f]=h,\   \   \   [h,e]=2e,\   \   \  [h,f]=-2f.
\end{equation*}
It is straightforward to show that
\begin{equation}
(\ad e)^{3}=0,\   \   \   (\ad f)^{3}=0,  \   \   \   (\ad h)^{p}=\ad h,
\end{equation}
where we use the Fermat's little theorem.
As $p\ge 3$ by assumption, we have
\begin{equation}
(\ad e)^{p}=0,\   \   \   (\ad f)^{p}=0,  \   \   \   (\ad h)^{p}=\ad h.
\end{equation}
Note that the Killing form on ${\mathfrak{sl}}_{2}$ is non-degenerate. By Theorem \ref{p-mapping-sf}
(\cite{Jac2}; Theorem 11, Corollary) we have
\begin{equation}\label{sl2-pmapping}
e^{[p]}=0,\   \   \   f^{[p]}=0,\  \  \   h^{[p]}=h.
\end{equation}
Then $\mathfrak{u}({\mathfrak{sl}}_2)$ is the quotient algebra of $U({\mathfrak{sl}}_2)$ modulo relations
\begin{equation}
e^{p}=0,\   \   \    \    f^{p}=0,\   \   \   \   h^{p}-h=0.
\end{equation}
It follows that  the highest weight modules $L(\lambda)$  with $\lambda=0,1,\dots,p-1$
are all the irreducible $\mathfrak{u}({\mathfrak{sl}}_2)$-modules
when $p>2$ (cf. \cite{hum} (page 34, Exer. 5), \cite{SF98}), up to isomorphisms.

On the other hand, from Proposition \ref{Jchi} the ideal $J_{0}$ of
$V_{\hat\fg}(\ell,0)$ is generated by vectors
\begin{equation}
e(-1)^{p}\1,\    \    \   f(-1)^{p}\1,\    \   \   (h(-1)^{p}-h(-p))\1.
\end{equation}

\begin{lemma}\label{nilpotent-ell}
Let $\ell\in \{ 0,1,\dots,p-1\}$.  Then
\begin{equation}
e(-1)^{\ell+1}\1=0,\    \    \    \ f(-1)^{\ell+1}\1=0\    \  \mbox{ in }L_{\hat\fg}(\ell,0).
\end{equation}
\end{lemma}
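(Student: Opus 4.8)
The plan is to work inside $L_{\hat\fg}(\ell,0)$ for $\fg=\mathfrak{sl}_2$ and to exploit the $\mathfrak{sl}_2$-module structure coming from the degree-one subspace together with the $C_2$-type vanishing relations already established. First I would recall that in $L_{\hat\fg}(\ell,0)$ the $\hat\fg$-module is a quotient of $V_{\hat\fg}(\ell,0)$ by a graded submodule, so the relations of $J_0$ hold; in particular $e(-1)^p\1=0$ and $f(-1)^p\1=0$, and by Lemma~\ref{initial-step} and Theorem~\ref{main-affinevoa}, the Zhu algebra of the image of $V^0_{\hat\fg}(\ell,0)$ governs the degree-zero piece. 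The degree-zero component $L_{\hat\fg}(\ell,0)(0)=\BF\1$ is one-dimensional, and a key observation is that $\1\in\Omega(L_{\hat\fg}(\ell,0))$, so it is an $\mathfrak{u}(\mathfrak{sl}_2)$-module by Proposition~\ref{WtoA(V)module}; being the trivial module $L(0)$, we have $h(0)\1=0$, $e(0)\1=0$, $f(0)\1=0$.

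The main step is a weight/highest-weight argument on the vectors $e(-1)^k\1$ and $f(-1)^k\1$ in degree one... wait — these live in degree $k$. I would instead compute $h(0)$ acting on $e(-1)^k\1$: since $[h(0),e(-1)]=2e(-1)$ and $h(0)\1=0$, we get $h(0)\,e(-1)^k\1=2k\,e(-1)^k\1$, so $e(-1)^k\1$ is a vector of $h(0)$-weight $2k$ (mod $p$) which is also annihilated by $e(n)$ for $n\ge 0$ and by $h(n),f(n)$ for $n\ge 1$; that is, it is a singular vector for $\hat\fg$ in degree $k$ provided $f(0)\,e(-1)^k\1$ also vanishes or can be controlled. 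I would compute $f(0)\,e(-1)^k\1$ using $[f(0),e(-1)]=-h(-1)$ and $h(-1)\1$ being a genuine nonzero vector in degree one; an induction shows $f(0)\,e(-1)^k\1$ is a combination of $e(-1)^{k-1}h(-1)\1$ terms. The cleanest route is: show that $e(-1)^{\ell+1}\1$ generates a graded $\hat\fg$-submodule $W$ of $V_{\hat\fg}(\ell,0)$ with $W(0)=0$, hence $W$ lies in the maximal such submodule $J$, so its image in $L_{\hat\fg}(\ell,0)$ is zero; and symmetrically for $f$. To see $e(-1)^{\ell+1}\1$ is singular, I would verify $e(m)\cdot e(-1)^{\ell+1}\1=0$ for $m\ge 0$ (immediate from $[e,e]=0$ and $\langle e,e\rangle=0$), $h(m)\cdot e(-1)^{\ell+1}\1=0$ for $m\ge 1$, and crucially $f(1)\cdot e(-1)^{\ell+1}\1=0$: here $[f(1),e(-1)]=h(0)+\ell\langle e,f\rangle\bk=h(0)+\ell\bk$-type term, and one needs the $h(0)$-eigenvalue bookkeeping together with the level $\ell$ to force vanishing exactly when one hits $\ell+1$ factors — this is the classical Kac–Kazhdan / Malikov–Feigin–Fuks singular vector for affine $\mathfrak{sl}_2$ at a dominant integral level, here carried out over $\BF_p$ with $\ell\in\{0,\dots,p-1\}$.

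I expect the main obstacle to be precisely this last computation of $f(1)\,e(-1)^{\ell+1}\1$ (and its $f\leftrightarrow e$ mirror $e(1)\,f(-1)^{\ell+1}\1$): one must run the $\mathfrak{sl}_2$-string relation $f(1)e(-1)^{k}\1 = \big(\text{linear in }k\text{ and }\ell\big)e(-1)^{k-1}\1$ by induction and check that the scalar coefficient is $k(\ell+1-k)$ or similar, which is divisible by — in fact vanishes in $\BF_p$ when — $k=\ell+1$, using $\ell+1\le p$ so that $\ell+1\not\equiv 0\pmod p$ unless $\ell=p-1$, a case one handles directly since then $e(-1)^{p}\1=0$ already. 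Once the singular (equivalently, $C_2$-null after descending) property is in hand, $e(-1)^{\ell+1}\1$ lies in a graded submodule meeting degree zero trivially, hence dies in $L_{\hat\fg}(\ell,0)$, and the same argument with the Chevalley involution swapping $e\leftrightarrow f$, $h\leftrightarrow -h$ gives $f(-1)^{\ell+1}\1=0$. I would also remark that the level-$\ell$ relation $(h(-1)^p-h(-p))\1\in J_0$ is what makes $\ell$ effectively an element of $\BF_p$, so restricting to $\ell\in\{0,1,\dots,p-1\}$ is the natural hypothesis.
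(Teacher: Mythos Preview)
Your proposal, once you settle on the ``cleanest route,'' is correct and matches the paper's approach: one shows that $e(-1)^{\ell+1}\1$ and $f(-1)^{\ell+1}\1$ are singular vectors in $V_{\hat\fg}(\ell,0)$ via the $f(1)$-computation you outline (the paper simply cites \cite{Kac} rather than writing out the coefficient $k(\ell+1-k)$), so that the $\hat\fg$-submodule each generates lives in positive degrees and hence is contained in the maximal graded submodule $J$. Your opening remarks about $J_0$ and the Zhu algebra are unnecessary detours; the paper phrases the final step via membership in $\Omega(V_{\hat\fg}(\ell,0))$ and then invokes Lemma~\ref{singular-ideal}, but this is equivalent to your direct submodule argument since left ideals of $V_{\hat\fg}(\ell,0)$ coincide with $\hat\fg$-submodules.
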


\begin{proof} We need to prove that  the maximal graded ideal $J$ contains $e(-1)^{\ell+1}\1$ and $f(-1)^{\ell+1}\1$.
In view of Lemma \ref{singular-ideal}, it suffices to show that $e(-1)^{\ell+1}\1,\  f(-1)^{\ell+1}\1 \in \Omega(V_{\hat{\fg}}(\ell,0))$.
Just as in the case of characteristic $0$ (see \cite{Kac}),  one can show that
$e(-1)^{\ell+1}\1$ and $f(-1)^{\ell+1}\1$ are singular vectors in $V_{\hat\fg}(\ell,0)$ in the sense that
they are annihilated by $e(0)$ and $f(1)$, or equivalently, by $e(0)$ and $t\fg[t]$.
Then for $w=e(-1)^{\ell+1}\1,\  f(-1)^{\ell+1}\1$, we have $a(n)w=0$ for $a\in \fg,\ n\ge 1$.
Let $w$ be any vector satisfying the condition that $a(n)w=0$ for $a\in \fg,\ n\ge 1$. We claim that $w\in \Omega(V_{\hat{\fg}}(\ell,0))$.
For this, we show by induction on $n$  that $u_{m}w=0$ for any homogeneous vector $u$ of degree $n$ and for any $m\ge \deg u$.
This is true for $n=0,1$ as $V_{\hat{\fg}}(\ell,0)_{(0)}=\BF \1$ and $V_{\hat{\fg}}(\ell,0)_{(1)}=\fg$.
Assume that $n$ is a positive integer such that for every homogeneous vector $u$ with $\deg u\le n$,
$u_{m}w=0$ for all $m\ge \deg u$.
Let $v$ be a homogeneous vector of degree $n+1$.
Then $v$ is a linear combination of vectors $a(-k)u$ where $a\in \fg,\ k\ge 1$ and $u$ is a homogeneous vector with $\deg u\le n$.
 For $m\in \BZ$, we have
\begin{equation}
(a(-k)u)_{m}w=\sum_{i\ge 0}\binom{-k}{i}(-1)^{i}\left(a(-k-i)u_{m+i}w-(-1)^{k}u_{m-k-i}a(i)w\right).
\end{equation}
Assume $m\ge \deg (a(-k)u)$. That is, $m\ge \deg u+k$.
Then we have $m+i\ge \deg u$, so $u_{m+i}w=0$.
On the other hand, we have $a(i)w=0$ for $i\ge 1$ by assumption. As for $i=0$,  we have
\begin{equation*}
    u_{m-k}a(0)w=a(0)u_{m-k}w-(a_{0}u)_{m-k}w=0,
\end{equation*}
noticing that $m-k\ge \deg u$ and $\deg a_{0}u=\deg u\le n$. Thus $(a(-k)u)_{m}w=0$ for all $m\ge \deg (a(-k)u)$.
It follows that $w\in \Omega(V_{\hat{\fg}}(\ell,0))$. This proves $e(-1)^{\ell+1}\1,\  f(-1)^{\ell+1}\1 \in \Omega(V_{\hat{\fg}}(\ell,0))$.
By Lemma \ref{singular-ideal}, $e(-1)^{\ell+1}\1$ and $f(-1)^{\ell+1}\1$ generate a proper graded ideal of
$V_{\hat\fg}(\ell,0)$.
Then $J$ contains $e(-1)^{\ell+1}\1$ and $f(-1)^{\ell+1}\1\in J.$ This completes the proof.
\end{proof}

Note that $L_{\hat{\fg}}(\ell,0)$ is a quotient vertex algebra of $V^0_{\hat\fg}(\ell,0)$, so that
$A(L_{\hat{\fg}}(\ell,0))$ is a quotient algebra of $A(V^0_{\hat\fg}(\ell,0))$.
In view of Lemma \ref{nilpotent-ell}, we also have
$[e]^{\ell+1}=0=[f]^{\ell+1}$ in $A(L_{\hat{\fg}}(\ell,0))$.
Consequently,  $L_{\hat{\fg}}(\ell,0)$ has at most $\ell+1$ irreducible $\BN$-graded modules
up to equivalences. The following is our belief:

\begin{conjecture}\label{nonnegative-level}
For $\ell\in \{0,1,\dots,p-1\}$, $L_{\hat{\fg}}(\ell,0)$ has exactly $\ell+1$ irreducible $\BN$-graded modules
up to equivalences.
\end{conjecture}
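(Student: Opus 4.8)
The plan is to reduce everything to Zhu algebras. By Propositions~\ref{WtoA(V)module} and~\ref{A(V)toV}, irreducible $\BN$-graded $L_{\hat{\fg}}(\ell,0)$-modules are classified by irreducible $A(L_{\hat{\fg}}(\ell,0))$-modules. Since $L_{\hat{\fg}}(\ell,0)$ is a quotient vertex algebra of $V^0_{\hat{\fg}}(\ell,0)$, the construction of \cite{FZ} gives a surjective algebra homomorphism $A(V^0_{\hat{\fg}}(\ell,0))\twoheadrightarrow A(L_{\hat{\fg}}(\ell,0))$, so by Theorem~\ref{main-affinevoa} one may write $A(L_{\hat{\fg}}(\ell,0))\cong\mathfrak{u}(\mathfrak{sl}_2)/\bar{J}$, where $\bar{J}$ is the image of the maximal graded ideal $J$ of $V_{\hat{\fg}}(\ell,0)$. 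By Lemma~\ref{nilpotent-ell} the vectors $e(-1)^{\ell+1}\1$ and $f(-1)^{\ell+1}\1$ lie in $J$, and since $[e(-1)^{\ell+1}\1]=[e]^{\ell+1}$ and $[f(-1)^{\ell+1}\1]=[f]^{\ell+1}$ (by the computation $[a(-1)v]=[v][a]$ from the proof of Lemma~\ref{initial-step}), we get $[e]^{\ell+1},[f]^{\ell+1}\in\bar{J}$. Recalling that the irreducible $\mathfrak{u}(\mathfrak{sl}_2)$-modules are exactly $L(0),L(1),\dots,L(p-1)$, and that $[f]^{\ell+1}$ acts nontrivially on $L(\lambda)$ when $\lambda\ge\ell+1$, this gives the upper bound already recorded in the text, with the only candidates being the $\ell+1$ pairwise inequivalent modules $L_{\hat{\fg}}(\ell,L(\lambda))$, $0\le\lambda\le\ell$. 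Thus it remains to prove the lower bound: each $L_{\hat{\fg}}(\ell,L(\lambda))$ with $0\le\lambda\le\ell$ is an $L_{\hat{\fg}}(\ell,0)$-module, equivalently $J$ acts trivially on it, equivalently $\bar{J}$ annihilates $L(\lambda)$.

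Fix $\lambda$ with $0\le\lambda\le\ell$ and put $W=L_{\hat{\fg}}(\ell,L(\lambda))$, which by Proposition~\ref{A(V)toV} is, up to equivalence, the unique irreducible $\BN$-graded $V_{\hat{\fg}}(\ell,0)$-module with top $L(\lambda)$. Since $\lambda\le p-1$, the $(\lambda+1)$-dimensional module $L(\lambda)$ is a $\mathfrak{u}(\mathfrak{sl}_2)$-module, so by Corollary~\ref{V0irreducible} $W$ is already a $V^0_{\hat{\fg}}(\ell,0)$-module and $J_0$ acts trivially on it. Next one checks that the singular vectors $e(-1)^{\ell+1}\1$ and $f(-1)^{\ell+1}\1$ act trivially on $W$: as in the proof of Lemma~\ref{nilpotent-ell} they lie in $\Omega(V_{\hat{\fg}}(\ell,0))$, so by Lemma~\ref{singular-ideal} the ideal $I_e$ of $V_{\hat{\fg}}(\ell,0)$ generated by $e(-1)^{\ell+1}\1$ is proper and graded, and (using Lemma~\ref{th:DnvmodO}) its image in $A(V_{\hat{\fg}}(\ell,0))\cong U(\mathfrak{sl}_2)$ is the two-sided ideal $(e^{\ell+1})$, which annihilates $L(\lambda)$ because $e$ acts on $L(\lambda)$ as a nilpotent operator of index $\le\lambda+1\le\ell+1$; thus $L(\lambda)$ is a module for $A(V_{\hat{\fg}}(\ell,0)/I_e)$, and Proposition~\ref{A(V)toV} yields an irreducible $\BN$-graded $(V_{\hat{\fg}}(\ell,0)/I_e)$-module with top $L(\lambda)$, which by uniqueness is $W$, so $I_e$ acts trivially on $W$; the same argument applies to $f(-1)^{\ell+1}\1$. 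Hence the ideal $\tilde{J}$ of $V_{\hat{\fg}}(\ell,0)$ generated by $J_0$ together with $e(-1)^{\ell+1}\1$ and $f(-1)^{\ell+1}\1$ annihilates $W$; note $\tilde{J}\subseteq J$, since $\tilde{J}$ is a graded ideal with trivial degree-$0$ component (Lemma~\ref{max-ideal}).

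The crux --- and the reason this remains only a conjecture --- is to pass from ``$\tilde{J}$ annihilates $W$'' to ``$J$ annihilates $W$'', equivalently to show $\tilde{J}=J$, i.e.\ that $\bar{V}:=V_{\hat{\fg}}(\ell,0)/\tilde{J}$ is a simple vertex algebra (a little more weakly, that $\bar{J}$ lies in the common annihilator in $\mathfrak{u}(\mathfrak{sl}_2)$ of $L(0),\dots,L(\ell)$, so that $A(L_{\hat{\fg}}(\ell,0))$ still admits all $\ell+1$ irreducibles). One verifies, via Lemma~\ref{initial-step}, that $A(\bar{V})\cong\mathfrak{u}(\mathfrak{sl}_2)/(e^{\ell+1},f^{\ell+1})$, whose irreducible modules are exactly $L(0),\dots,L(\ell)$, and the second paragraph shows every $L_{\hat{\fg}}(\ell,L(\lambda))$ with $0\le\lambda\le\ell$ is a $\bar{V}$-module; what is missing is a modular counterpart of the classical fact (for $\widehat{\mathfrak{sl}}_2$ at positive integral level over $\BC$; cf.\ \cite{FZ}, \cite{LL}) that the maximal graded ideal of $V_{\hat{\fg}}(\ell,0)$ is generated by the obvious singular vectors, with no extra relations forced by the $p$-restricted structure. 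I would attack this either (i) by a direct study of singular vectors in the generalized Verma modules over $\widehat{\mathfrak{sl}}_2$ of level $\ell\in\BF_p$, applying Lemma~\ref{singular-ideal} to the lowest nonzero graded piece of a hypothetical proper ideal of $\bar{V}$ strictly larger than $0$ to contradict the structure of $A(\bar{V})$; or (ii) by realizing $L_{\hat{\fg}}(\ell,0)$ and the $L_{\hat{\fg}}(\ell,L(\lambda))$ concretely through a modular free-field, lattice, or parafermion construction, building on the study of modular affine $\mathfrak{sl}_2$ in \cite{wang}, and reading the module category off directly. Granting $\tilde{J}=J$, the lower bound follows from the second paragraph, and with the upper bound the conjecture is proved; the special case $\ell=p-1$, where $\tilde{J}=J_0$, amounts precisely to the assertion that $V^0_{\hat{\fg}}(p-1,0)$ is simple.
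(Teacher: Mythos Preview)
The statement is labeled \emph{Conjecture} in the paper, and the paper does not prove it. What the paper does establish is exactly the upper bound you reproduce: from Lemma~\ref{nilpotent-ell} and the computation $[e(-1)^{\ell+1}\1]=[e]^{\ell+1}$, $[f(-1)^{\ell+1}\1]=[f]^{\ell+1}$ in $A(L_{\hat{\fg}}(\ell,0))$, the paper concludes that $L_{\hat{\fg}}(\ell,0)$ has at most $\ell+1$ irreducible $\BN$-graded modules up to equivalences, and then records the lower bound as an open problem.

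Your proposal is entirely in line with this. Your first paragraph recovers the upper bound by the same route as the paper. Your second paragraph goes a bit further than the paper by showing that each $L_{\hat{\fg}}(\ell,L(\lambda))$ with $0\le\lambda\le\ell$ is a module for the intermediate quotient $\bar V=V_{\hat{\fg}}(\ell,0)/\tilde J$, which is a correct and useful observation but, as you rightly note, does not settle the conjecture. Your third paragraph correctly isolates the missing step---showing $\tilde J=J$, or at least that $\bar J$ annihilates each $L(\lambda)$ for $0\le\lambda\le\ell$---and is honest that you have not proved it. The two attack lines you suggest (singular-vector analysis in the modular generalized Verma modules; explicit free-field or lattice realizations) are reasonable directions, but neither is carried out, so your write-up is a sketch toward a proof rather than a proof. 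That is the appropriate status: the paper itself does not claim more.
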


If $\ell\in \{0,1,\dots,p-2\}$, since $e(-1)^{\ell+1}\1$ and $f(-1)^{\ell+1}\1$ are of degree $\ell+1<p$ and nonzero
in $V^0_{\hat\fg}(\ell,0)$, we have $V^0_{\hat\fg}(\ell,0)\ne L_{\hat\fg}(\ell,0)$.
If Conjecture \ref{nonnegative-level} is true, then $V^0_{\hat\fg}(p-1,0)$ and $L_{\hat\fg}(p-1,0)$
have the same irreducible $\BN$-graded modules. In view of this,
we believe the following is true:

\begin{conjecture}
$V^0_{\hat\fg}(p-1,0)=L_{\hat\fg}(p-1,0)$.
\end{conjecture}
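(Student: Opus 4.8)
The plan is to prove the equality by showing that $V^0_{\hat\fg}(p-1,0)$ (here $\fg=\mathfrak{sl}_2$) is already simple, i.e.\ that its maximal graded ideal is zero; since $L_{\hat\fg}(p-1,0)$ is the quotient of $V^0_{\hat\fg}(p-1,0)$ by that ideal, this gives the claim. Because a left ideal of $V^0_{\hat\fg}(p-1,0)$ is the same as an $\hat\fg$-submodule and its degree-zero component is $\BF\1$, Lemma~\ref{max-ideal} applies and reduces the problem to showing that $V^0_{\hat\fg}(p-1,0)$ contains no nonzero homogeneous vector $w$ of positive degree with $a(m)w=0$ for all $a\in\fg$, $m\ge 1$ (the lowest nonzero homogeneous component of any nonzero proper graded ideal consists of such \emph{singular} vectors). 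Equivalently, writing $J$ for the maximal graded ideal of $V_{\hat\fg}(p-1,0)$, one must prove the reverse of the already-known inclusion $J_0\subseteq J$.

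A natural way to rule out positive-degree singular vectors is via graded dimensions. Since $t^{-1}\fg[t^{-1}]$ is a restricted Lie subalgebra of $\hat\fg$ and $J_0$ is the ideal of $U(t^{-1}\fg[t^{-1}])$ generated by the central elements $a(-m)^p-a^{[p]}(-mp)$, one sees that $V^0_{\hat\fg}(p-1,0)\cong\mathfrak{u}\bigl(t^{-1}\fg[t^{-1}]\bigr)$ as $\BZ$-graded vector spaces, so its graded dimension is $\prod_{m\ge 1}\bigl((1-q^{pm})/(1-q^m)\bigr)^{3}$. Hence the goal becomes: show $L_{\hat\fg}(p-1,0)$ has this same graded dimension. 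This is a statement purely about the modular representation theory of $\hat\fg$ at level $p-1$, and would be approached using the Segal--Sugawara conformal structure (available since $p-1\ne-h^\vee=-2$ in $\BF$), the associated contravariant bilinear form whose radical is exactly $J$, and a determination of singular vectors in $V_{\hat\fg}(p-1,0)$ at depths $\ge p$. An alternative route is a reduction-modulo-$p$ argument: start from the rational, integrable, characteristic-zero affine vertex operator algebra of level $p-1$ for $\hat\fg$, fix an admissible self-dual $\BZ$-form, and show that its reduction modulo $p$ is simultaneously a quotient vertex algebra of $V^0_{\hat\fg}(p-1,0)$ generated by $\1$ and carries a nondegenerate invariant form, hence is simple; one then compares graded dimensions to force equality with $V^0_{\hat\fg}(p-1,0)$.

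The main obstacle is exactly the step that goes beyond formal manipulation: controlling the characteristic-$p$ structure of $L_{\hat\fg}(p-1,0)$. In the singular-vector formulation one must rule out ``new'' singular vectors with no characteristic-zero counterpart; in the reduction-modulo-$p$ formulation one must verify both that the reduction satisfies the defining relations of $V^0_{\hat\fg}(p-1,0)$ --- in particular the restricted relation $\bigl(h(-1)^p-h(-p)\bigr)\1=0$, which is not automatic for reductions of characteristic-zero modules --- and that no prime-$p$ torsion is introduced into the contravariant form. The level $p-1$ is the distinguished case precisely because there the characteristic-zero integrable-weight count $\ell+1$ equals $p$, the number of irreducible $\mathfrak{u}(\mathfrak{sl}_2)$-modules and hence the number of irreducible $\BN$-graded $V^0_{\hat\fg}(p-1,0)$-modules; this makes the coincidence plausible but also locates the difficulty squarely in the still-unresolved modular structure theory. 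A successful proof would in particular yield $A(L_{\hat\fg}(p-1,0))\cong\mathfrak{u}(\mathfrak{sl}_2)$ and settle Conjecture~\ref{nonnegative-level} for $\ell=p-1$ as a byproduct.
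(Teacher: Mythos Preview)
The statement you are attempting to prove is presented in the paper as a \emph{conjecture}, not a theorem: the paper offers no proof and explicitly labels it ``we believe the following is true.'' So there is no paper proof to compare against. Your proposal is, appropriately, not a proof either but a strategic outline that correctly isolates the difficulty.

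Your reduction is sound: simplicity of $V^0_{\hat\fg}(p-1,0)$ is equivalent to the absence of positive-degree singular vectors, and the graded-dimension identity $\prod_{m\ge 1}\bigl((1-q^{pm})/(1-q^m)\bigr)^{3}$ for $V^0_{\hat\fg}(p-1,0)$ is correct. But you have also correctly located the genuine gap: neither of your two proposed routes (direct singular-vector analysis, or reduction-modulo-$p$ of the characteristic-zero integrable vacuum module) is carried out, and each faces a real obstruction that you name yourself. In the reduction-mod-$p$ route, the relation $(h(-1)^p-h(-p))\1=0$ does \emph{not} hold in the characteristic-zero integrable module $L_{\widehat{\mathfrak{sl}}_2}(p-1,0)$, so it is not automatic that its mod-$p$ reduction is a quotient of $V^0_{\hat\fg}(p-1,0)$; one would need to produce this relation as a consequence of integrability together with reduction, which is precisely the heart of the matter. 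In the singular-vector route, ruling out characteristic-$p$-specific singular vectors at depths $\ge p$ is exactly the unresolved problem the conjecture encodes.

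In short: your write-up is an honest and well-organized discussion of why the conjecture is plausible and what a proof would require, but it is not a proof, and the paper does not claim one either.
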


\appendix\setcounter{section}{1}\setcounter{theorem}{0}\setcounter{equation}{0}
\section*{Appendix: Certain identities on binomial coefficients}

\begin{lemma}\label{simple-facts}
For $m,n\in \BZ,\ k\in \BN$, we have
\begin{gather}
    (m-n)\binom{m+n+1}{k}=\sum_{i=0}^k(m-n-k+2i)\binom{m+1}{k-i}\binom{n+1}{i}, \label{eq:simple001}\\
    \sum_{i=0}^k\binom{m+1}{k-i}\binom{n+1}{i}\binom{m-k+i+1}{3}\delta_{m+n-k,0}=\binom{m+1}{3}\delta_{m+n,0}\delta_{k,0} \label{eq:simple002}
\end{gather}
\end{lemma}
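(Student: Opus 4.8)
The plan is to derive both identities from the Vandermonde convolution together with two elementary absorption rules for binomial coefficients. First I would record three auxiliary identities, each holding as an identity in the polynomial ring $\BZ[x]$ or $\BZ[x,y]$ (so it may be verified on sufficiently many nonnegative integer values) and hence valid for arbitrary integer arguments and, after reduction mod $p$, in $\BF$: \textbf{(a)} $(x+1-j)\binom{x+1}{j}=(x+1)\binom{x}{j}$ for $j\in\BN$; \textbf{(b)} the Vandermonde convolution $\binom{x+y}{k}=\sum_{i=0}^{k}\binom{x}{i}\binom{y}{k-i}$ for $k\in\BN$; and \textbf{(c)} the subset-of-a-subset rule $\binom{x}{j}\binom{x-j}{r}=\binom{x}{r}\binom{x-r}{j}$ for $j,r\in\BN$. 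Since $k\in\BN$ throughout, every sum below is genuinely finite, so no issue of infinitely many nonzero terms arises even for negative $m,n$.

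For \eqref{eq:simple001} the key observation is the decomposition of the linear coefficient,
\[
    m-n-k+2i=\bigl(m+1-(k-i)\bigr)-\bigl(n+1-i\bigr).
\]
Substituting this into the right-hand side and applying (a) to each of the two resulting sums turns it into
\[
    (m+1)\sum_{i=0}^{k}\binom{m}{k-i}\binom{n+1}{i}-(n+1)\sum_{i=0}^{k}\binom{m+1}{k-i}\binom{n}{i},
\]
and two applications of (b) collapse both sums to $\binom{m+n+1}{k}$, leaving $(m-n)\binom{m+n+1}{k}$, as desired.

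For \eqref{eq:simple002} I would split into cases according to the Kronecker delta. If $m+n\ne k$, both sides vanish: the left because $\delta_{m+n-k,0}=0$, and the right because $\delta_{m+n,0}\delta_{k,0}\ne0$ would force $m+n=k=0$, contradicting $m+n\ne k$. If $m+n=k$, reindex the sum by $j=k-i$; then $m-k+i+1=m-j+1$, and identity (c) with $r=3$ gives $\binom{m+1}{j}\binom{m+1-j}{3}=\binom{m+1}{3}\binom{m-2}{j}$. Pulling $\binom{m+1}{3}$ out of the sum and applying (b) once more gives
\[
    \binom{m+1}{3}\sum_{j=0}^{k}\binom{m-2}{j}\binom{n+1}{k-j}=\binom{m+1}{3}\binom{m+n-1}{k}=\binom{m+1}{3}\binom{k-1}{k},
\]
and $\binom{k-1}{k}=\delta_{k,0}$ for $k\in\BN$. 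Since we are in the case $k=m+n$, the factor $\delta_{k,0}$ coincides with $\delta_{m+n,0}\delta_{k,0}$, and the right-hand side of \eqref{eq:simple002} is recovered.

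There is essentially no obstacle here; the only points requiring care are the justification that the three auxiliary identities persist for negative integer arguments (which is handled uniformly by the polynomial-identity principle rather than by any combinatorial interpretation) and the routine reindexing and case analysis in \eqref{eq:simple002}.
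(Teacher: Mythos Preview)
Your proof is correct and follows essentially the same strategy as the paper's: decompose the linear weight, apply an absorption identity, and then use Vandermonde. The only minor difference is in \eqref{eq:simple001}, where your two-term split $m-n-k+2i=(m+1-(k-i))-(n+1-i)$ lands directly on $\binom{m+n+1}{k}$, whereas the paper's three-term split $m-n-k+2i=(m-n)-(k-i)+i$ first yields $(m-n)\binom{m+n+2}{k}-(m-n)\binom{m+n+1}{k-1}$ and then invokes Pascal's rule; your route is slightly cleaner but otherwise identical in spirit, and for \eqref{eq:simple002} the two arguments coincide.
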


\begin{proof}
If $k=0$, both sides of \eqref{eq:simple001} are equal to $m-n$.
Assume $k\ge 1$. Then
\begin{align*}
    &\phantom{=}\;\;\sum_{i=0}^k(m-n-k+2i)\binom{m+1}{k-i}\binom{n+1}{i}\\
    &=\sum_{i=0}^k (m-n)\binom{m+1}{k-i}\binom{n+1}{i}-\sum_{i=0}^k (k-i)\binom{m+1}{k-i}\binom{n+1}{i}\\
    &\qquad +\sum_{i=0}^k i\binom{m+1}{k-i}\binom{n+1}{i}\\
    &=(m-n)\binom{m+n+2}{k}-\sum_{i=0}^{k-1} (m+1)\binom{m}{k-i-1}\binom{n+1}{i}\\
    &\qquad +\sum_{i=1}^{k} (n+1)\binom{m+1}{k-i}\binom{n}{i-1}\\
    &=(m-n)\binom{m+n+2}{k}-(m+1)\binom{m+n+1}{k-1}+(n+1)\binom{m+n+1}{k-1}\\
    &=(m-n)\left(\binom{m+n+2}{k}-\binom{m+n+1}{k-1}\right)\\
    &=(m-n)\binom{m+n+1}{k}.
\end{align*}
This proves that \eqref{eq:simple001} holds for all $k\in \BN$.

On the other hand, we have
\begin{align*}
    &\phantom{=}\,\,\sum_{i=0}^k\binom{m+1}{k-i}\binom{n+1}{i}\binom{m-k+i+1}{3}\delta_{m+n-k,0}\\
    &=\sum_{i=0}^k \binom{m+1}{3} \binom{m-2}{k-i}\binom{n+1}{i}\delta_{m+n-k,0}\\
    &=\binom{m+1}{3} \binom{m+n-1}{k}\delta_{m+n-k,0}\\
    &=\binom{m+1}{3} \binom{k-1}{k}\delta_{m+n-k,0}\\
    &=\binom{m+1}{3}\delta_{m+n,0}\delta_{k,0},
\end{align*}
proving \eqref{eq:simple002}.
\end{proof}

\end{document}